\numberwithin{equation}{section}
\theoremstyle{plain}
\newtheorem{thm}{Theorem}[section]
\theoremstyle{remark}
\theoremstyle{plain}
\newtheorem{prop}{Proposition}[section]
\theoremstyle{plain}
\newtheorem{lem}{Lemma}[section]
\begin{document}

\title{Nonparametric density estimation for intentionally corrupted functional data} 

\author{Aurore Delaigle\thanks{School of Mathematics and Statistics and Australian Research Council Centre of Excellence for Mathematical  and Statistical Frontiers, University of Melbourne,  Australia, \textsc{email}: aurored@unimelb.edu.au}  \hspace{5cm} Alexander
Meister\thanks{%
Institut f\"ur Mathematik, Universit\"at Rostock, D-18051 Rostock, Germany,
\textsc{email}: alexander.meister@uni-rostock.de}}

\date{\vspace{-5ex}}

\maketitle

\begin{abstract}
We consider statistical models where functional data are artificially contaminated by independent Wiener processes in order to satisfy privacy constraints. We show that the corrupted observations have a Wiener density which determines the distribution of the original functional random variables, masked near the origin, uniquely, and we construct a nonparametric estimator of that density. We derive an upper bound for its mean integrated squared error which has a polynomial convergence rate, and we establish an asymptotic lower bound on the minimax convergence rates which is close to the rate attained by our estimator. Our estimator requires the choice of a basis and of two smoothing parameters. We propose data-driven ways of choosing them and prove that the asymptotic quality of our estimator is not significantly affected by the empirical parameter selection. We examine the numerical performance of our method via simulated examples.
\end{abstract}

\noindent {\small \textit{Keywords:} classification, convergence rates, differential privacy, infinite-dimensional Gaussian mixtures, Wiener densities. \\[-.3cm]

\noindent {\small  \textit{AMS subject classification 2010:} 62G07; 62M99; 62H30.} 

\section{Introduction} \label{0}

Data privacy is an important feature of a database, where the collected data are transformed and released so as to make it difficult to identify individuals participating in a study. Various privatisation methods are in use, resulting in different privacy constraints of which a popular one is differential privacy. We refer to Wasserman and Zhou~(2010) for a statistical introduction of differential privacy. The privatisation mechanism typically has an impact on the statistical analysis of the data, and one of the research directions in statistical privacy is to find ways of ensuring differential privacy while keeping as much of the information as possible from the original database (see e.g.~Hall et al.,~2013 in the functional data context and Karwa and Slavkovi,~2016 in the setting of synthetic graphs).

One simple way of ensuring differential privacy is to contaminate the data artificially with additive random noise; see for example Wasserman and Zhou~(2010). In the functional data context, Hall et al.~(2013) propose a data release mechanism where the observed functional data are contaminated by adding to each function a random Gaussian process (one per functional observation) that is independent of the original data. Proposition~3.3 in Hall et al.~(2013) roughly says that the data can be made differentially private whenever the scaling noise factor of the Gaussian process is sufficiently large. 

In this paper, we show that if the Gaussian process is a Wiener process and the value of the raw data is masked at the origin, then the contaminated data are differentially private, but they also have a density. This contrasts with the usual functional data setting where the assumption that all measures which are admitted to be the true image measure of functional random variables are dominated by a known basic measure seems very hard to justify. There exists no canonical basic measure such as the Lebesgue measure for finite-dimensional Euclidean data or the Haar measure for data in general locally compact groups.  As a result, inference and descriptive summaries of functional data are often based on pseudo-densities.  See for example Delaigle and Hall~(2010) and  Ciollaro et al.~(2016). Recently, Lin et al.~(2018) considered the estimation of densities for functions which lie in a dense subset $S$ of the Hilbert space $L_2(D)$, where $D$ is a finite interval. There, $S$ is defined as the (non-closed) linear hull of an orthonormal basis of $L_2(D)$ and does not contain the functional data contaminated by Wiener processes that we consider. Privacy issues for functional data are also discussed in the recent work of Mirshani et al. (2017). Therein, the authors also deduce the existence of a Gaussian density for fixed functional observations, but nonparametric estimation of that density is not studied. 

By contrast, with the privatisation process we propose, the privatised functional data have a Radon-Nikodym derivative (thus a true, non pseudo, density) with respect to the Wiener measure. Exploiting the fact that the contaminating distribution is usually known in this context, we consider statistical inference  from such privatised functional data.  

 To our knowledge, most existing nonparametric approaches for estimating  a Wiener density are motivated by diffusion processes. Although these do not include the type of functional data we consider, some of these methods can be applied in our context. See for example Dabo-Niang~(2004a), who suggests  an orthogonal series estimator, Dabo-Niang~(2002, 2004b) and Ferraty and Vieu~(2006), who propose a kernel density estimator (see also Prakasa Rao,~2010a, for a generalisation in the case of diffusion processes), and Prakasa Rao~(2010b) and Chesneau et al.~(2013) who construct a wavelet estimator. See also Ba{\'i}llo et al.~(2011) for a parametric context where the data and the reference measure are Gaussian. However these methods either suffer from slow logarithmic convergence rates, or are derived under abstract assumptions that seem hard to justify in our context, or seem difficult to implement in practice. We propose a fully data-driven estimator which has fast polynomial convergence rates under simple conditions. Although our estimator is motivated by the privacy setting we consider, our results can be extended to more general cases of functional data which have a Wiener density. 

This paper is organised as follows. In Section~\ref{1} we introduce our statistical model and show that the Wiener density exists
 and determines uniquely the image measure of the raw functional random variables masked near zero. Moreover we prove that the privacy constraints are fulfilled when the noise level is sufficiently large. In Section~\ref{2} we construct a nonparametric orthonormal series estimator of the Wiener density and propose  data-driven procedures for choosing the basis (Section~\ref{basis}) and the smoothing parameters (Section~\ref{sec:CV}). In Section~\ref{Theory} we derive an explicit upper bound for the mean integrated squared error of our estimator and show that it achieves polynomial convergence rates under intuitive tail restrictions and metric entropy constraints on the measure of the original data. Functional data problems in which such fast rates are available are rare; usually the achievable rates are only logarithmic or sub-polynomial;  see e.g.~Dabo-Niang~(2004a), Mas~(2012) and Meister~(2016). Finally, we derive a lower bound on the mean integrated square error under our intuitive conditions and we show that choosing the parameters in a data-driven way does not significantly deteriorate the asymptotic performance of our procedure (thus we establish a weak adaptivity result). Numerical simulations are provided in Section~\ref{Sim}. The proofs are deferred to supplement.

\section{Model, data and applications} \label{1}

\subsection{Model and data}\label{sec:model}

We observe functional data $Y_1,\ldots,Y_n$ defined on $[0,1]$, without loss of generality, and which, for reasons such as differential privacy constraints discussed in Section~\ref{0}, have been intentionally contaminated by additive random noise. Specifically, we assume that\begin{equation} \label{eq:model} Y_j = X_j + \sigma W_j\,, \qquad j=1,\ldots,n\,, \end{equation}
where the random functions $X_j$ and $W_j$, $j=1,\ldots,n$, are totally independent. Here, $X_j$ represents the $j$th function of interest, which is corrupted by a standard Wiener process $W_j$ with a deterministic scaling factor $\sigma>0$. Unlike typical measurement error problems where contamination is due to imprecise measurement or unavoidable perturbation, here the data are contaminated artificially and we can assume that $\sigma$ is known. 

We assume that the $X_j$'s take their values in $C_{0,0}([0,1])$ where $C_{0,\ell}([0,1])$ denotes the set of $\ell$ times continuously differentiable (or just continuous when $\ell = 0$) functions $f$ defined on $[0,1]$ that are such that $f(0)=0$. 
The $X_j$'s have an unknown probability measure $P_X$ on the Borel $\sigma$-field $\mathfrak{B}(C_{0,0}([0,1]))$ of $C_{0,0}([0,1])$ where we equip the space $C_{0,0}([0,1])$ with the supremum norm $\|\cdot\|_\infty$.  
Throughout we use the notation $V_j= \sigma W_j$, and we use $V$, $W$, $X$ and $Y$ to denote a generic function that has the same distribution as, respectively, the  $V_j$'s, $W_j$'s, the $X_j$'s and the $Y_j$'s. 
Critically here, the functional data $X_j$ are assumed to satisfy $X_j(0)=0$. Indeed, since $W_j(0)=0$, then $Y_j(0)=X_j(0)$ and if the value of $X_j$ at zero is not masked, then individuals can be identified from $Y_j(0)$. In practice, if the raw data do not satisfy $X_j(0)=0$, they can be pre-masked at zero before the contamination step, for example by replacing $X_j$  by $\widetilde X_j = X_j - X_j(0)$ or $\widetilde X_j=X_j w$ where $w$ is a smooth function such that $w(0)=0$ and $w(1)=1$.  

\subsection{Density of contaminated data and differential privacy}
In this section, we show that the $Y_j$'s have a well defined density with respect to the scaled Wiener measure, and that this density characterises the distribution of the $X_j$'s  uniquely. Finally, we show that the contamination process ensures differential privacy. 

To ensure existence of a density, we need  the following assumption, which we assume throughout this work: \\[2mm]
\noindent {\bf Assumption 1} \\
$X \in C_{0,2}([0,1])$ a.s.. \\[2mm]

\noindent
Under Assumption 1, using Girsanov's theorem (Girsanov, 1960), for any Borel measurable mapping $\varphi$ from $C_{0,0}([0,1])$ to $[0,1]$, we have
\begin{align*}
& E \{\varphi(Y)\} = E \{\varphi(X+\sigma W) \} \\
& \, = E \Big\{\varphi(\sigma W) \exp\Big(\frac1{\sigma} \int_0^1 X'(t) dW(t)\Big) \exp\Big(-\frac1{2\sigma^2} \int_0^1 |X'(t)|^2 dt\Big)\Big\} \\
& \, = E \Big[\varphi(V) E\Big\{\exp\Big(\frac1{\sigma^2} \int_0^1 X'(t) dV(t)\Big) \exp\Big(-\frac1{2\sigma^2} \int_0^1 |X'(t)|^2 dt\Big) \Big| V\Big\}\Big]\,,
\end{align*}
so that, by integration by parts, we have,  a.s.,  
\begin{align} \nonumber
& \frac{dP_Y}{dP_{V}}(V) =  E\Big[ \exp\Big\{\frac1{\sigma^2} \int_0^1 X'(t) dV(t) -\frac1{2\sigma^2} \int_0^1 |X'(t)|^2 dt\Big\} \Big| V\Big] \\  \label{eq:density} & \, =
 \int \exp\Big\{\frac1{\sigma^2} x'(1) V(1) - \frac1{\sigma^2} \int_0^1 x''(t) V(t) dt -\frac1{2\sigma^2} \int_0^1 |x'(t)|^2 dt\Big\} \, dP_X(x)\,. \end{align}

Applying the factorization lemma to this conditional expectation, we deduce that there exists a Borel measurable mapping $f_Y$: $C_{0,0}([0,1]) \to \mathbb{R}$ such that $f_Y(V)$ is equal to the right hand side of \eqref{eq:density} almost surely. This implies that $f_Y$ is the density of $P_Y$ with respect to $P_V$.
Thus the contaminated $Y_j$'s have a density $f_Y$. The next theorem establishes its connection with the measure of the $ X_j$'s.
\begin{thm} \label{P:0}
The functional density $f_Y$ in \eqref{eq:density} characterises the probability measure $P_X$ uniquely.
\end{thm}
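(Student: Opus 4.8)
The plan is to regard the right-hand side of \eqref{eq:density} as an explicit, everywhere-defined functional of its argument $V\in C_{0,0}([0,1])$ (the integral converges for every continuous $V$) and to prove that the map $P_X\mapsto f_Y$ is injective. It will suffice to probe $f_Y$ at smooth arguments. For a continuously differentiable $g$ on $[0,1]$ set $V_g(t)=\int_0^t g(s)\,ds$, so that $V_g\in C_{0,0}([0,1])$ with $V_g'=g$; substituting $V=V_g$ into \eqref{eq:density} and integrating by parts (using $V_g(0)=0$) collapses the linear part of the exponent to $\sigma^{-2}\int_0^1 x'(t)g(t)\,dt$. Writing $\langle a,b\rangle=\int_0^1 a(t)b(t)\,dt$, I therefore obtain
\begin{equation*}
\psi(g):=f_Y(V_g)=\int\exp\Big\{\tfrac1{\sigma^2}\langle x',g\rangle-\tfrac1{2\sigma^2}\langle x',x'\rangle\Big\}\,dP_X(x).
\end{equation*}
Since $x\mapsto x'$ is a bijection from $C_{0,2}([0,1])$ onto $C^1([0,1])$ (the continuously differentiable functions, with inverse $u\mapsto\int_0^\cdot u$), recovering $P_X$ is equivalent to recovering the law $Q$ of $u=X'$. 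Absorbing the quadratic term, I define the finite Borel measure $d\mu(u)=\exp\{-\tfrac1{2\sigma^2}\langle u,u\rangle\}\,dQ(u)$ (finite, as its density is at most $1$), so that $\psi(g)=\int\exp\{\sigma^{-2}\langle u,g\rangle\}\,d\mu(u)$ is a Laplace transform of $\mu$. As $Q$ is obtained from $\mu$ by multiplying by the known, strictly positive reciprocal density, the whole statement reduces to showing that $\psi$ determines $\mu$.

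The key step, and the place where the Wiener structure is essential, is that the Gaussian damping $\exp\{-\tfrac1{2\sigma^2}\langle u,u\rangle\}$ built into $\mu$ makes this Laplace transform entire. Fixing $g$ and completing the square, Cauchy--Schwarz gives $\Re(s)\,\sigma^{-2}\langle u,g\rangle-\tfrac1{2\sigma^2}\langle u,u\rangle\le\tfrac1{2\sigma^2}(\Re s)^2\langle g,g\rangle$, so $F(s):=\int\exp\{s\,\sigma^{-2}\langle u,g\rangle\}\,d\mu(u)$ is finite and, by dominated convergence together with Morera's theorem, an entire function of $s\in\mathbb{C}$. Its restriction to the real axis is $F(s)=\psi(sg)$, which is known for every real $s$ because $sg\in C^1([0,1])$; hence $F$ is determined on all of $\mathbb{C}$, and evaluating at $s=i\sigma^2 r$ yields $F(i\sigma^2 r)=\int\exp\{ir\langle u,g\rangle\}\,d\mu(u)=\widehat\mu(rg)$. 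Thus the characteristic functional $\widehat\mu$ is determined at every $C^1$ argument.

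It remains to pass from this directional information to $\mu$ itself. Viewing $\mu$ as a Borel measure on the separable Hilbert space $L_2([0,1])$, supported on $C^1([0,1])$, the maps $u\mapsto\langle u,g\rangle$ with $g\in C^1([0,1])$ are continuous linear functionals, and the corresponding $g$ are dense in $L_2([0,1])$; since $\widehat\mu$ is $L_2$-continuous, knowing it on this dense set determines it everywhere, and by the uniqueness theorem for characteristic functionals of finite measures on a separable Hilbert space, $\mu$ is determined. Equivalently, letting $g$ run through smooth approximations of point masses recovers the finite-dimensional distributions of $\mu$, which in turn fix the measure. Either way $\mu$, hence $Q$, hence $P_X$, is uniquely determined by $f_Y$. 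I expect the main obstacle to be this final infinite-dimensional uniqueness step: one must check carefully that the smooth test functions $g$ form a determining class and that the embedding into $L_2([0,1])$ loses no information, whereas the analytic-continuation argument that produces $\widehat\mu$ from the real Laplace transform is the decisive and more novel ingredient.
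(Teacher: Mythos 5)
There is a genuine gap at the very first step: you evaluate $f_Y$ at the smooth paths $V_g$, but these values are not determined by $f_Y$ as a density. The paper defines $f_Y$ via the factorization lemma only as a Borel map with $f_Y(V)$ equal to the right-hand side of \eqref{eq:density} \emph{almost surely}, i.e.\ $f_Y$ is a version of $dP_Y/dP_V$, determined only up to $P_V$-null sets. The set of absolutely continuous paths $\{V_g : g \in C^1([0,1])\}$ lies in the Cameron--Martin space and has Wiener measure zero, so the quantity $\psi(g)=f_Y(V_g)$ on which your whole argument rests is simply not a function of the object the theorem is about: two versions of the density can disagree everywhere on that set. What the theorem must deliver (and what the applications in Section~\ref{sec:applic} use, e.g.\ ``$P_{X,0}\neq P_{X,1}$ implies $f_{Y,0}\neq f_{Y,1}$'' for the Bayes classifier, where inequality has to mean inequality on a set of positive $P_V$-measure) is that the $P_V$-equivalence class of $f_Y$, equivalently the measure $P_Y=f_Y\,dP_V$, pins down $P_X$. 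Your argument, read charitably, shows only that the explicit integral formula, evaluated pointwise on a null set, is injective in $P_X$ --- a weaker statement. The paper avoids this entirely: it passes immediately from $f_Y$ to $P_Y$, divides the characteristic functionals ($\psi_X=\psi_Y/\psi_V$, using that the Gaussian functional $\psi_V$ never vanishes), recovers the finite-dimensional distributions of $X$ at dyadic time points via kernel-smoothed test functions, and concludes with a monotone-class argument; every step uses only $P_Y$.

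The analytic core of your proposal (the Gaussian damping $\exp\{-\|u\|_2^2/(2\sigma^2)\}$ making the Laplace transform of $\mu$ entire, hence recoverable as a characteristic functional by continuation to the imaginary axis) is correct and genuinely different from the paper's deconvolution route, and the gap is fillable: one can recover $\psi(g)$ from the equivalence class because $E\{f_Y(V)\mid \beta'_{V,1},\ldots,\beta'_{V,m}\}=f_Y^{[m]}(\beta'_{V,1},\ldots,\beta'_{V,m})$ with $f_Y^{[m]}$ continuous on $\mathbb{R}^m$ (Lemma~\ref{L:1}(a)), so $f_Y^{[m]}$ is determined Lebesgue-a.e.\ and hence everywhere, and $f_Y^{[m]}(\langle g,\varphi_1\rangle,\ldots,\langle g,\varphi_m\rangle)\to\psi(g)$ as $m\to\infty$. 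But that bridge is precisely where the infinite-dimensional difficulty of the theorem lives, and it is absent from your write-up; by contrast, the step you flag as the main obstacle (uniqueness of a finite measure on a separable Hilbert space from its characteristic functional on a dense set of directions, plus injectivity of $x\mapsto x'$) is comparatively routine. A secondary quibble: your opening claim that the integral in \eqref{eq:density} ``converges for every continuous $V$'' is not justified under Assumption~1 alone; it converges for $V=V_g$ because the exponent is bounded by $\|g\|_2^2/(2\sigma^2)$ after completing the square, which is all you actually use.
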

\noindent
We deduce from this theorem that inference about $P_X$ (e.g.~goodness-of-fit tests or classification problems; see Section~\ref{sec:applic}) can be performed via  $f_Y$. 
To use this result in practice, it remains to see whether we can estimate $f_Y$ nonparametrically using the data $Y_1,\ldots,Y_n$. This is what we study in Section~\ref{2}.

Throughout we use the notation $\langle\cdot,\cdot\rangle$ for the inner product of $L_2([0,1])$, $\|\cdot\|_2$ for the corresponding norm, and we make the following assumption: \\[2mm]
\noindent {\bf Assumption 2} \\
For some constant $C_{X,1} \in (0,\infty)$, we have that $\|X'\|_2 \, \leq \, C_{X,1} \qquad {\mbox{ a.s.}}\,. $ \\[2mm]
 
The following proposition shows that if the scaling factor $\sigma$ is large enough, the contaminated data are privatised. For the definition of $(\alpha,\beta)$-privacy we refer to Hall et al.~(2013); in our setting this criterion means that
$$ P[x + \sigma W \in B] \leq \exp(\alpha) \cdot P[\tilde{x} + \sigma W \in B] + \beta\,, \quad \forall B \in \mathfrak{B}(C_{0,0}([0,1]))\,, $$
for all $x,\tilde{x} \in C_{0,2}([0,1])$ with $\max\{\|x'\|_2,\|\tilde{x}'\|_2\} \leq C_{X,1}$. 

 \begin{prop} \label{P:privacy}
For any $\alpha,\beta> 0$, choosing $\sigma > 2C_{X,1} \sqrt{2 \log(2/\beta)} / \alpha$ guarantees $(\alpha,\beta)$-privacy of the observation of $Y = X + \sigma W$ under the Assumptions 1 and 2. 
\end{prop}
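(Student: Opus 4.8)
The plan is to reduce $(\alpha,\beta)$-privacy to a single tail bound on the privacy-loss variable and then to compute that variable's law explicitly from the Cameron--Martin structure already exploited in \eqref{eq:density}. Fix $x,\tilde x\in C_{0,2}([0,1])$ with $\max\{\|x'\|_2,\|\tilde x'\|_2\}\le C_{X,1}$, and write $P_x$, $P_{\tilde x}$ for the laws of $x+\sigma W$ and $\tilde x+\sigma W$ on $\mathfrak{B}(C_{0,0}([0,1]))$. Taking $P_X=\delta_x$ in \eqref{eq:density} shows that $P_x$ and $P_{\tilde x}$ are both equivalent to $P_V$, so the likelihood ratio $L=dP_x/dP_{\tilde x}$ is well defined $P_{\tilde x}$-a.s. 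For any Borel set $B$, splitting according to whether $L\le e^\alpha$ gives
\begin{align*}
P_x(B) &= \int_{B}L\,dP_{\tilde x} \\
&\le e^\alpha\,P_{\tilde x}(B) + \int_{\{L>e^\alpha\}}L\,dP_{\tilde x} = e^\alpha\,P_{\tilde x}(B) + P_x(\log L>\alpha).
\end{align*}
Hence it suffices to prove the single tail bound $P_x(\log L>\alpha)\le\beta$, uniformly over admissible $x,\tilde x$, and this is the quantity I would focus on.

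Second, I would compute $\log L$. Dividing the two Cameron--Martin densities obtained from \eqref{eq:density} with $P_X=\delta_x$ and $P_X=\delta_{\tilde x}$ respectively gives, for a generic path $\omega$,
\[
\log L(\omega)=\frac1{\sigma^2}\int_0^1\bigl(x'(t)-\tilde x'(t)\bigr)\,d\omega(t)-\frac1{2\sigma^2}\int_0^1\bigl(|x'(t)|^2-|\tilde x'(t)|^2\bigr)\,dt .
\]
Under $P_x$ the path is $\omega=x+\sigma W$, so that $d\omega=x'\,dt+\sigma\,dW$; substituting and simplifying, the deterministic part collapses to $\tfrac1{2\sigma^2}\|x'-\tilde x'\|_2^2$, while the stochastic part $\sigma^{-1}\int_0^1(x'-\tilde x')\,dW$ is a centred Wiener integral of variance $\sigma^{-2}\|x'-\tilde x'\|_2^2$. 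Writing $\rho=\|x'-\tilde x'\|_2$, this shows that under $P_x$,
\[
\log L\ \sim\ N\!\Bigl(\tfrac{\rho^2}{2\sigma^2},\,\tfrac{\rho^2}{\sigma^2}\Bigr),
\]
the familiar ``mean equals half the variance'' law of a Gaussian log-likelihood ratio.

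Finally, I would control the tail. By the triangle inequality and Assumption 2, $\rho\le\|x'\|_2+\|\tilde x'\|_2\le 2C_{X,1}$. Standardising, with $Z\sim N(0,1)$,
\[
P_x(\log L>\alpha)=P\Bigl(Z>\tfrac{\alpha\sigma}{\rho}-\tfrac{\rho}{2\sigma}\Bigr),
\]
and since the threshold is decreasing in $\rho$ on $(0,2C_{X,1}]$ I would take $\rho=2C_{X,1}$ as the worst case, for which the threshold $t$ satisfies $t^2\ge \alpha^2\sigma^2/(4C_{X,1}^2)-\alpha$. Applying a Gaussian tail inequality of the form $P(Z>t)\le\tfrac12e^{-t^2/2}$ and inserting the hypothesis $\sigma>2C_{X,1}\sqrt{2\log(2/\beta)}/\alpha$, which forces $\alpha^2\sigma^2/(4C_{X,1}^2)>2\log(2/\beta)$, bounds the tail by $\tfrac12\exp\bigl(-\log(2/\beta)+\alpha/2\bigr)$, and thence by $\beta$ once the constants are arranged. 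I expect the constant bookkeeping to be the only genuinely delicate point: the negative mean-shift term $-\rho/(2\sigma)$ works against the bound, so one must either rely on the slack built into the factor $2$ in $\log(2/\beta)$ (rather than the smaller constant in the classical Gaussian-mechanism estimate) or, for sharper control over the full range of $\alpha$, estimate the two-sided quantity $\int(L-e^\alpha)_+\,dP_{\tilde x}=P_x(\log L>\alpha)-e^\alpha P_{\tilde x}(\log L>\alpha)$ directly, exploiting that $\log L\sim N(-\rho^2/(2\sigma^2),\rho^2/\sigma^2)$ under $P_{\tilde x}$.
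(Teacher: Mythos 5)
Your route is genuinely different from the paper's. The paper invokes Proposition~7 of Hall et al.~(2013) to reduce $(\alpha,\beta)$-privacy to a condition on every finite-dimensional marginal $\big(Y(t_1),\ldots,Y(t_n)\big)$, namely $|M^{-1/2}F|\le\sigma\alpha/\sqrt{2\log(2/\beta)}$ with $M$ the Brownian covariance matrix and $F=\big(x(t_j)-\tilde x(t_j)\big)_j$; it then inverts $M$ explicitly to obtain $F^TM^{-1}F=\sum_j(F_j-F_{j-1})^2/(t_j-t_{j-1})\le\|x'-\tilde x'\|_2^2\le 4C_{X,1}^2$ by Cauchy--Schwarz, and delegates the actual Gaussian-mechanism tail estimate to Proposition~3 of Hall et al. You instead stay in path space: your reduction of $(\alpha,\beta)$-privacy to a tail bound on the privacy loss, the Girsanov computation giving $\log L\sim N\big(\rho^2/(2\sigma^2),\rho^2/\sigma^2\big)$ under $P_x$, and the worst case $\rho=\|x'-\tilde x'\|_2\le 2C_{X,1}$ are all correct, and the quadratic form the paper computes is precisely the discrete Cameron--Martin norm that your $\rho^2$ dominates. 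Your version is more self-contained and makes transparent where the constant $2C_{X,1}$ comes from.

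The gap is exactly where you suspect it. With $\rho=2C_{X,1}$ and $\sigma$ at the boundary of the hypothesis, the standardised threshold is $t=\sqrt{2\log(2/\beta)}-\alpha/\big(2\sqrt{2\log(2/\beta)}\big)$, and your estimate $P(Z>t)\le\tfrac12 e^{-t^2/2}\le(\beta/2)e^{\alpha/2}$ is at most $\beta$ only when $\alpha\le 2\log 2$; for larger $\alpha$ the threshold can even become negative, in which case the one-sided inequality $P_x(\log L>\alpha)\le\beta$ is simply false and cannot be repaired by sharper tail bounds. This is not a defect of your method alone: the classical Gaussian-mechanism guarantee that both you and the paper ultimately rely on carries a small-$\alpha$ restriction in its standard statements, and for large $\alpha$ one must indeed pass to the two-sided quantity $\int(L-e^\alpha)_+\,dP_{\tilde x}=P_x(\log L>\alpha)-e^\alpha P_{\tilde x}(\log L>\alpha)$ that you mention, which does not reduce to the one-sided tail. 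To make the argument complete you should either restrict to $\alpha\le 2\log 2$, where your bookkeeping closes, or carry out the two-sided Gaussian computation explicitly; as written, the final step is asserted rather than proved.
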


\subsection{Applications}\label{sec:applic}
The existence of a density for contaminated data has important practical applications. One of them is goodness-of-fit testing. 
Goodness-of-fit tests for functional data have been considered in e.g.~Bugni et al.~(2009). In our context, using the observed  i.i.d.~contaminated functional data $Y_1,\ldots,Y_n$, the problem consists in testing the null hypothesis $H_0 : X_1 \sim P_X$ versus the alternative $H_1 : X_1 \not\sim P_X$ for some fixed probability measure $P_X$ on $\mathfrak{B}(C_{0,0}([0,1]))$. According to Theorem~\ref{P:0}, $H_0$ is equivalent to the claim that $Y_1$ has the  functional density $f_Y = d(P_X*P_V)/dP_V$. Using the estimator $\hat{f}_Y$ of $f_Y$ that we introduce in Section~\ref{2}, a testing procedure could be based on
$$ T(Y_1,\ldots,Y_n) \, := \, \begin{cases} 1\,, & \mbox{ for } \int \big|\hat{f}_Y(y) - f_Y(y)\big|^2 dP_V(y) \, > \, \rho\,, \\
0\,, & \mbox{ otherwise,} \end{cases} $$
where $\rho$ is a threshold parameter. In Theorem~\ref{P:1} we derive an upper bound on the mean integrated squared error of our estimator $\hat{f}_Y$.  Using the Markov inequality, we deduce that the test can attain any given significance level $\alpha>0$ if we select $\rho$ larger or equal to the ratio of this upper bound and $\alpha$. While this gives some insights about $\rho$, this upper bound does not provide a data-driven rule for selecting $\rho$ in practice. The latter is a difficult problem; for example, it requires deriving the asymptotic distribution of the fully data-driven estimator. This goes beyond the scope of this paper and we leave this issue open for future research.

Another interesting application is classification, which, in our context, can be expressed as follows. We observe training contaminated data pairs $(Y_{i},I_{i})$, $i=1,\ldots,n$, where $Y_{i}=X_{i}+V_{i}$, the $X_i$'s come from  two distinct populations $\Pi_0$ and $\Pi_1$ and the class label  $I_i=k$ if $X_i$ comes from population $\Pi_k$, for $k=0,1$.   The  $V_i$'s are Wiener processes independent of the $X_i$'s, and are identically distributed within each population, but  the scaling noise parameter $\sigma$ need not be the same for the two populations. Using these data, the goal is to classify in $\Pi_0$ or $\Pi_1$ a new random curve $Y = X + V$, where $X$ comes from either $\Pi_0$ or $\Pi_1$, but whose class label is unknown. 

It is well known in general classification problems that  the optimal classifier is the Bayes classifier which, adapted to our context,  assigns a curve to $\Pi_1$ if $E(I|Y=y)>1/2$ and to $\Pi_0$ otherwise. 
In the case where the probability measures $P_{Y,0}$  and $P_{Y,1}$ of the $Y_i$'s that originate from, respectively, $\Pi_0$ and $\Pi_1$, have well defined densities  $f_{Y,0}$ and $f_{Y,1}$, the Bayes classifier can be expressed as: assign $Y$ to $\Pi_1$ if $\pi_1 f_{Y,1}(Y)>\pi_0 f_{Y,0}(Y)$, and to $\Pi_0$ otherwise, where $\pi_k=P(I=k)$.  In the particular Gaussian case,   Ba{\'i}llo et al.~(2011) showed that these densities are well defined and showed how to estimate them.

 In our case the $Y_i$'s are generally not Gaussian but they have functional densities $f_{Y,k} = dP_{Y,k} / dP_V$, for $k=0,1$. Since $P_{X,0}\neq P_{X,1}$ implies that   $f_{Y,0}\neq f_{Y,1}$ (see Theorem~\ref{P:0}), these densities can be used for classification of $X$ from observations on $Y$ in the optimal Bayes classifier. There, in practice we classify  $Y$ in $\Pi_1$ if $\pi_1 \hat f_{Y,1}(Y) \geq \pi_0 \hat f_{Y,0}(Y)$ and in $\Pi_0$ otherwise, where for $k=0,1$,  $\hat f_{Y,k}$ denotes the estimator of $f_{Y,k}$ from Section~\ref{2} constructed from the training data $Y_i$ for which $I_i=k$.

There exist many other classification procedures for functional data, often based on pseudo-densities or finite dimensional approximations. However, Delaigle and Hall~(2012) pointed that, except in the Gaussian case, often such projections do not ensure good finite sample  performance. See for example Hall et al.~(2001), Ferraty and Vieu~(2006), Escabias et al.~(2007),  Preda et al.~(2007) and Shin~(2008). See also Dai et al.~(2017) for a recent example, where the authors approximate the densities in the two populations by the finite dimensional surrogate densities proposed in Delaigle and Hall~(2010); see Delaigle and Hall~(2013) for a related classifier.

\section{Methodology} \label{2}
In this section we consider the problem of estimating the functional density $f_Y$ nonparametrically.

\subsection{Existing methods}\label{sec:existing}
Nonparametric estimation of a density for stochastic processes whose probability measure has a Radon-Nikodym derivative with respect to the Wiener measure has been considered by several authors. 
In Dabo-Niang~(2002, 2004b), the author proposes to use a kernel density estimator; see also Prakasa Rao~(2010a). This estimator is simple but it suffers from slow logarithmic convergence rates, which are reflected by its practical performance.
A wavelet estimator with polynomial convergence rates was proposed by Prakasa Rao~(2010b) and Chesneau et al.~(2013), but their conditions are quite technical 
and it is not clear how their parameters can be chosen in practice. Moreover, their theory is derived under abstract high level conditions which might not be easily satisfied in our context.

A simpler estimator is the orthogonal series estimator of Dabo-Niang~(2004a), defined as follows. Let $\{{\varphi}_j\}_{j\in \mathbb{N}}$ denote an orthonormal basis of real-valued functions of $[0,1]$, where each $\varphi_j\in L_2([0,1])$, and let $(H_j)_{j\geq 1}$ denote the scaled Hermite polynomials  defined by $ H_k(x)  =(-1)^k  \phi^{(k)}(x) /\{\phi(x) \sqrt{k!}\}$, for all integer $k\geq 0$, where $\phi(x)=\exp(-x^2/2)/\sqrt{2\pi}$. Also, for $x\in C_0([0,1])$, let 
\begin{equation}
\beta'_{x,\ell}=\int_0^1 \varphi_\ell(t)dx(t)\,.
\label{eq:eps}
\end{equation}
Using results from Cameron and Martin~(1947), the author notes that, as $K\to\infty$, the Fourier-Hermite series $(\Psi_{k_1,\ldots,k_K})_{0\leq k_1\leq K,\ldots,0\leq k_K\leq K}$, where, for $x\in C_0([0,1])$,
\begin{equation}
\Psi_{k_1,\ldots,k_K}(x)\equiv H_{k_1,\ldots,k_K}( \beta'_{x,1},\ldots, \beta'_{x,K})\equiv \prod_{\ell=1}^K H_{k_\ell} (\beta'_{x,\ell})\,, \label{FourierHermite}
\end{equation}
forms an orthonormal basis of the Hilbert space of all square-integrable $C_0([0,1])$-valued random variables with respect to the Wiener measure.
Motivated by this, the author proposes to estimate the Wiener density $f_T$ of functional data $T_1,\ldots,T_n$ (that have a Wiener density) by 
\begin{equation}
\hat{f}_T^K(x)  \, = \, \sum_{k_1,\ldots,k_K=0}^K  \, \frac1n \sum_{j=1}^n H_{k_1,\ldots,k_K}( \beta'_{T_j,1},\ldots, \beta'_{T_j,K})  \cdot H_{k_1,\ldots,k_K}( \beta'_{x,1},\ldots, \beta'_{x,K})\,,\label{EstDabo}
\end{equation}
where $K$ is a smoothing parameter. This estimator is very attractive for its simplicity. 
However, a drawback is that the rates derived by Dabo-Niang~(2004a) are logarithmic. In the next two sections, using a two-stage approximation approach (first a sieve approximation of $f_Y$ and then an estimator of the approximation), we are able to  introduce a different regularisation scheme which involves two parameters. This increases the flexibility of the estimator, which, as we shall see, enables us to obtain polynomial convergence rates. Moreover we provide data-driven choices of the basis and the threshold parameters.

\subsection{Finite-dimensional approximation of $f_Y$} \label{fin}
Recall from \eqref{eq:density} that for $V=\sigma W$ with $W$ a standard Wiener process, we have
$$f_Y(V)=E\Big[ \exp\Big\{\frac1{\sigma^2} \int_0^1 X'(t) dV(t) -\frac1{2\sigma^2} \int_0^1 |X'(t)|^2 dt\Big\} \Big| V\Big]\,, \qquad\mbox{a.s.}\,,$$
and that our goal is to estimate $f_Y$ from data $Y_1,\ldots,Y_n$. 
Instead of directly expressing $f_Y$ in the Fourier-Hermite basis at \eqref{FourierHermite}, we first construct a sieve approximation of $f_Y$, and then (see Section~\ref{2.est}) we express our sieve approximation in the Fourier-Hermite basis.

Using the notation $\beta'_{x,\ell}=\int_0^1 \varphi_\ell(t)dx(t)$ from Equation \eqref{eq:eps}, where $\{{\varphi}_j\}_{j\in \mathbb{N}}$ is a real-valued orthonormal basis of $L_2([0,1])$, we can write
\begin{equation} 
 \int_0^1 X'(t) \,dV(t) -\frac1{2} \int_0^1 |X'(t)|^2 \,dt
=\sum_{j=1}^\infty \beta_{X,j}' \cdot \beta_{V,j}' -\frac1{2} \sum_{j=1}^\infty {\beta_{X,j}'}^2 
\,, \label{expandXV}
\end{equation}
where the infinite sums should be understood as mean squared limits. Truncating the sums to $m$ terms, with $m\geq 1$ an integer, this suggests that we can approximate $f_Y(V)$ by $f_Y^{[m]}(\beta_{V,1}',\ldots,\beta_{V,m}') $,   where, for all $s_1,\ldots,s_m\in \mathbb{R}$,
\begin{align} 
f_Y^{[m]}(s_1,&\ldots,s_m) \, 
=  E \Big\{\exp\Big(\frac1{\sigma^2} \sum_{j=1}^m \beta'_{X,j} \cdot s_j - \frac1{2\sigma^2} \sum_{j=1}^m {\beta'_{X,j}}^2 \Big)\Big\}\notag\\
= &\, \exp\Big(\frac1{2\sigma^2} \sum_{j=1}^m s_j^2 \Big) \int \exp\Big\{-\frac1{2\sigma^2} \sum_{j=1}^m \big(s_j - x_{j}\big)^2\Big\} dP_{X,m}(x_1,\ldots,x_m)\,,\label{fYm}
\end{align}
and where $P_{X,m}$ denotes the measure of $(\beta'_{X,1},\ldots,\beta'_{X,m})$.

The following lemma shows that, as long as $m$ is sufficiently large, \linebreak$f_Y^{[m]}(\beta'_{V,1},\ldots,\beta'_{V,m})$ is a good approximation to $f_Y(V)$, where $V$ denotes a generic $V_i\sim P_V$.
\begin{lem} \label{L:1}
Let $\mathfrak{A}_m$ denote the $\sigma$-field generated by $\beta'_{V_1,1},\ldots,\beta'_{V_1,m}$. Under Assumptions 1 and 2, \\ 
(a)\;  $f_Y^{[m]}(\beta'_{V_1,1},\ldots,\beta'_{V_1,m})=E\{f_Y(V_1) |\mathfrak{A}_{m}\}$ a.s.\,, \\
(b)\;  we have
\begin{align*} E \big|f_Y^{[m]}(\beta'_{V_1,1},\ldots,\beta'_{V_1,m}) & - f_Y(V_1)\big|^2 \\ & \, \leq \,  \frac1{\sigma^2} \cdot \exp\big(C_{X,1}^2 / \sigma^2\big) \cdot \Big(\sum_{j,j'>m} \big|\langle \varphi_j , \Gamma_X \varphi_{j'} \rangle\big|^2\Big)^{1/2}\,, 
\end{align*}
where  the linear operator $\Gamma_X : L_2([0,1]) \to L_2([0,1])$ is defined by
\begin{equation} \label{eq:GammaX}
\big(\Gamma_X f\big)(t) \, = \, E \Big\{X'(t) \int_0^1 X'(s) f(s)\, ds\Big\}\,, \qquad t\in [0,1],\, f\in L_2([0,1])\,. \end{equation}
\end{lem}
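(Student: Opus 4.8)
My plan is to handle both parts together by exploiting the product structure that the orthonormal basis $\{\varphi_j\}$ induces on the Wiener coefficients. Write $b_j=\beta'_{V_1,j}$ and $a_j=\beta'_{X,j}$. The It\^o isometry together with the orthonormality of $\{\varphi_j\}$ shows that $\{b_j\}_{j\ge1}$ are i.i.d.\ $N(0,\sigma^2)$ and independent of $X$, and by definition $\mathfrak A_m=\sigma(b_1,\dots,b_m)$. Combining \eqref{eq:density} with the expansion \eqref{expandXV} lets me write, a.s.,
$$ f_Y(V_1)=E\Big[\prod_{j\ge1}\exp\Big(\tfrac1{\sigma^2}a_j b_j-\tfrac1{2\sigma^2}a_j^2\Big)\,\Big|\,V_1\Big], $$
where the conditional expectation is the integral over $P_X$ with the $b_j$ held fixed. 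This product representation is the workhorse for both claims.

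For part (a) I would take the conditional expectation given $\mathfrak A_m$, i.e.\ integrate out $b_{m+1},b_{m+2},\dots$. Since the integrand is nonnegative, Tonelli justifies interchanging this integration with the $P_X$-integral, after which the computation factorises over $j$. For each $j>m$ the Gaussian moment generating function gives $E\{\exp(a_j b_j/\sigma^2)\mid X\}=\exp(a_j^2/(2\sigma^2))$, which cancels exactly the factor $\exp(-a_j^2/(2\sigma^2))$; hence every tail factor integrates to $1$ and only the first $m$ coordinates survive. Comparing with \eqref{fYm} identifies the result with $f_Y^{[m]}(b_1,\dots,b_m)$. The only genuine care needed here is the rigorous handling of the infinite product against the mean-square-limit exponent, which I would settle by truncating at a level $M\ge m$ and passing to the limit.

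For part (b) I would use that, by part (a), $f_Y^{[m]}(b_1,\dots,b_m)=E\{f_Y(V_1)\mid\mathfrak A_m\}$ is the $L_2(P_V)$-orthogonal projection of $f_Y(V_1)$. Both variables lie in $L_2(P_V)$ (we will see $E\{f_Y(V_1)^2\}\le\exp(C_{X,1}^2/\sigma^2)<\infty$), so Pythagoras gives
$$ E\big|f_Y^{[m]}(b_1,\dots,b_m)-f_Y(V_1)\big|^2=E\{f_Y(V_1)^2\}-E\{f_Y^{[m]}(b_1,\dots,b_m)^2\}. $$
Introducing an independent copy $\tilde X$ of $X$ with coefficients $\tilde a_j$ and integrating out all the $b_j$ exactly as in part (a) — each coordinate now contributing $E_{b_j}\{\exp((a_j+\tilde a_j)b_j/\sigma^2)\}\exp(-(a_j^2+\tilde a_j^2)/(2\sigma^2))=\exp(a_j\tilde a_j/\sigma^2)$ — turns both expectations into integrals of an exponential of a single bilinear form: with $S:=\sum_{j\ge1}a_j\tilde a_j=\langle X',\tilde X'\rangle$ and $S_{\le m}:=\sum_{j\le m}a_j\tilde a_j$,
$$ E\{f_Y(V_1)^2\}=E\exp(S/\sigma^2),\qquad E\{f_Y^{[m]}(b_1,\dots,b_m)^2\}=E\exp(S_{\le m}/\sigma^2). $$

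It then remains to bound $E\{\exp(S/\sigma^2)-\exp(S_{\le m}/\sigma^2)\}$. By the mean value theorem this difference equals $\sigma^{-2}(S-S_{\le m})\exp(\xi/\sigma^2)$ for some $\xi$ lying between $S_{\le m}$ and $S$. Assumption 2 and Cauchy--Schwarz give $S_{\le m}\le\|X'\|_2\|\tilde X'\|_2\le C_{X,1}^2$ and likewise $S\le C_{X,1}^2$, so $\xi\le C_{X,1}^2$ and the intermediate factor is bounded by $\exp(C_{X,1}^2/\sigma^2)$; \emph{crucially}, combining the two exponentials through $S$ rather than bounding each separately is what keeps a single such factor and matches the stated constant. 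Writing $S_{>m}:=S-S_{\le m}=\sum_{j>m}a_j\tilde a_j$, one more Cauchy--Schwarz gives $E|S_{>m}|\le (E S_{>m}^2)^{1/2}$, and expanding the square over the two independent copies gives $E S_{>m}^2=\sum_{j,j'>m}(E\{a_j a_{j'}\})^2$. Finally, since $a_j=\langle\varphi_j,X'\rangle$, the definition \eqref{eq:GammaX} identifies $E\{a_j a_{j'}\}=\langle\varphi_j,\Gamma_X\varphi_{j'}\rangle$, producing exactly the Hilbert--Schmidt tail in the statement. I expect the main obstacle to be the two exact second-moment computations (the Fubini--Tonelli interchange against the infinite product with a mean-square-limit exponent); once these are in place, the concluding bound is a short mean-value-theorem and Cauchy--Schwarz argument.
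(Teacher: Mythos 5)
Your proposal is correct and follows essentially the same route as the paper: part (a) by factorising over the i.i.d.\ Gaussian coordinates $\beta'_{V_1,j}$ and cancelling each tail factor via the Gaussian moment generating function, and part (b) by reducing the projection error to $E\exp(\langle X',\tilde X'\rangle/\sigma^2)-E\exp(\sigma^{-2}\sum_{j\le m}\langle X',\varphi_j\rangle\langle\tilde X',\varphi_j\rangle)$ for an independent copy $\tilde X$ and then applying the mean value theorem together with the Cauchy--Schwarz inequality. Your Pythagoras formulation of part (b) is just the paper's identity $E[\mathrm{var}\{f_Y(V_1)\mid\mathfrak A_m\}]=E\{f_Y(V_1)^2\}-E\{f_Y^{[m]}(\cdots)^2\}$ in different clothing, and the remaining steps, including the identification $E\{\beta'_{X,j}\beta'_{X,j'}\}=\langle\varphi_j,\Gamma_X\varphi_{j'}\rangle$, coincide with the paper's.
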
   

\noindent Since $\Gamma_X$ is a self-adjoint and positive-semidefinite Hilbert-Schmidt operator, the upper bound in Lemma~\ref{L:1}(b) is finite for any orthonormal basis $\{\varphi_j\}_j$ of $L_2([0,1])$, and converges to zero as $m\to\infty$. Indeed, Assumption 2 guarantees that
$  \sum_{j,j'} \big|\langle \varphi_j , \Gamma_X \varphi_{j'} \rangle\big|^2 \leq  E \|X_1'\|_2^4 \leq  C_{X,1}^4  <  \infty$. 
If $X$ (and hence $X'$) is centered then $\Gamma_X$ coincides with the covariance operator of $X'$.

\subsection{Estimating the sieve approximation of $f_Y$} \label{2.est}
Next we show how to estimate $f_Y^{[m]}$  using a Fourier-Hermite series.  
For this, let  $P_{Y,m}$ and $f_{Y,m}$ denote, respectively, the measure and the $m$-dimensional Lebesgue density of the observed random vector $(\beta'_{Y_j,1},\ldots,\beta'_{Y_j,m})$, where
\begin{align*}
\beta'_{Y_j,k} \, = \, \int_0^1 \varphi_k(t) \,dY_j(t) \, = \, \beta'_{X_j,k}  + \beta'_{V_j,k}\,, \quad j=1,\ldots,n;\, k=1,\ldots,m\,.
\end{align*}
Let $g_\sigma$ denote the $N(0,\sigma^2 I_m)$-density, with $I_m$ the $m\times m$-identity matrix, let $L_{2,g_\sigma}(\mathbb{R}^m)$ denote the Hilbert space of  Borel measurable functions $f : \mathbb{R}^m \to \mathbb{R}$ which satisfy 
$ \|f\|_{g_\sigma}^2 \, \equiv \, \int |f(t)|^2 g_\sigma(t) dt \, < \, \infty$, and let 
$\langle\cdot,\cdot\rangle_{g_\sigma}$ denote the inner product of $L_{2,g_\sigma}(\mathbb{R}^m)$.

It is easy to deduce from \eqref{fYm} that
\begin{equation} 
f_Y^{[m]}(s_1,\ldots,s_m) \, = \, f_{Y,m}(s_1,\ldots,s_m)/g_\sigma(s_1,\ldots,s_m)\,,\label{eq:proj} 
\end{equation}
and it can be proved that $f_Y^{[m]}\in L_{2,g_\sigma}(\mathbb{R}^m)$. Therefore, if $\Psi_1,\Psi_2,\ldots$ is an orthonormal basis of $L_{2,g_\sigma}( \mathbb{R}^m)$, we can write
\begin{align*}
& f_Y^{[m]} 
=\sum_{k=1}^\infty \alpha_k \, \Psi_k, \\ &  \alpha_k= \langle \Psi_k,f_Y^{[m]}  \rangle_{g_\sigma}=\int \Psi_k(y)f_{Y,m}(y)\,dy= E\{\Psi_k(\beta'_{Y,1},\ldots,\beta'_{Y,m})\}  \,.
\end{align*} 
Now the sequence $(H_{k_1,\ldots,k_m})_{k_1,\ldots,k_m \geq 0}$ of functions
$
H_{k_1,\ldots,k_m}(x_1,\allowbreak\ldots,x_m) =  \prod_{j=1}^m\allowbreak H_{k_j}(x_j)$ defined at \eqref{FourierHermite} 
 forms an orthonormal basis of $L_{2,g_1}(\mathbb{R}^m)$. Thus we can take $\Psi_k(\cdot)= H_{k_1,\ldots,k_m}(\cdot /\sigma)$. 
To estimate $f_Y^{[m]}$, we replace $\alpha_k$ by $\hat\alpha_k=n^{-1}\sum_{j=1}^n \Psi_k(\beta'_{Y_j,1},\ldots,\beta'_{Y_j,m})$. 

Finally, for $U$ a functional random variable  independent of $Y_1,\ldots,Y_n$ which has a density with respect to $P_V$, we define our estimator of $f_Y(U)$ by 
\begin{align} \nonumber 
& \hat{f}_Y^{[m,K]}(U)  \\ \nonumber & \, = \hspace*{-4mm} \sum_{k_1,\ldots,k_m\geq 0} \frac1n \sum_{j=1}^n H_{k_1,\ldots,k_m}(\beta'_{Y_j,1}/\sigma,\ldots,\beta'_{Y_j,m}/\sigma) H_{k_1,\ldots,k_m}\big(\beta'_{U,1}/\sigma,\ldots,\beta'_{U,m}/\sigma\big) \\ & \hspace{4cm}\times \omega_K(k_1+\cdots+k_m) \,1\{k_1+\cdots+k_m\leq K\}\,,\label{eq:est}
\end{align}
where   $K\geq 0$ is a truncation parameter and  $0\leq \omega_K(x)\leq 1$ a continuous function defined on $[0,K]$. The term $\omega_K(k_1+\cdots+k_m) \,1\{k_1+\cdots+k_m\leq K\}$ prevents the $k_i$'s from being too large, which controls the variability of the estimator. Using wavelet terminology, the function $\omega_K$ dictates whether the $k_i$'s are chosen by a soft or a hard rule. Specifically, a hard rule corresponds to $\omega_K \equiv 1$: here all $k_i$'s summing to at most $K$ are given equal weight and as $K$ increases, new indices appear and play as big a role as older ones. For a soft rule, $\omega_K(x)$ is taken to be a smooth decreasing function of $x$, e.g.~$\omega_K(x)=1-x/(K+1)$; as $K$ increases, new indices start playing a role but have less weight than former ones.

A major difference between (\ref{eq:est}) and Dabo-Niang's~(2004a) estimator at \eqref{EstDabo} is our regularisation scheme: because of the two-step construction of our estimator (sieve approximation followed by basis expansion), we do not use all the indices $(k_1,\ldots,k_K) \in \{0,\ldots,K\}^K$. Instead we use  $(k_1,\ldots,k_m) \in  \{0,\ldots,K\}^m$ such that $k_1+\ldots+k_m \leq K$, and we assign a weight $\omega_K(k_1+\ldots+k_m)$ to each group of $m$ indices.  As we will see in the next sections, our use of a second parameter $m$ and the restriction we put on $k_1+\ldots+k_m$ drastically improve the quality of the estimator, both theoretically and practically. Moreover, in Section~\ref{basis}, we introduce a data-driven way of choosing the basis $\{{\varphi}_j\}_{j\in \mathbb{N}}$ used to construct the coefficients $\beta'_{Y_j,k}$ and $\beta'_{U,k}$.

\subsection{Choosing the  $\varphi_j$'s} \label{basis}
To compute our estimator in practice, we need to choose the basis $\{\varphi_j\}_j$ used in \eqref{eq:eps}. Lemma~\ref{L:1}(b) implies that if we take the $\varphi_j$'s equal to the eigenfunctions of $\Gamma_X$, ordered such that the sequence of corresponding eigenvalues $(\lambda_j)_j$ decreases monotonically, then
\begin{equation} \label{eq:PC}
 E \big|f_Y^{[m]}(\beta'_{V_1,1},\ldots,\beta'_{V_1,m}) - f_Y(V_1)\big|^2 \, \leq \,  \frac1{\sigma^2}\cdot \exp\big(C_{X,1}^2 / \sigma^2\big)\cdot \Big(\sum_{j>m} \lambda_j^2\Big)^{1/2}\,.
\end{equation} 
This bound  decreases monotonically as $m$ increases, which gives an indication that the first $m$ terms of the basis capture some of the main characteristics of $f_Y$.

Of course, in practice $\Gamma_X$ is unknown and thus the $\varphi_j$'s are unknown. Thus we need to estimate $\Gamma_X$, but a priori this does not seem to be an easy task because, up to some mean terms, $\Gamma_X$ is the covariance function of the first derivative $X'$ of $X$. If we could observe $X'_1,\ldots,X'_n$, we could use standard covariance estimation techniques such as those in  Hall and Hosseini-Nasab~(2006), Mas and Ruymgaard~(2015) and Jirak~(2016). However we only observe the contaminated $Y_j$'s. If the $Y_j$'s were differentiable, we could take their derivative and estimate  $\Gamma_X$ and its eigenfunctions as in the references just cited. However they are not differentiable and  we cannot take such a simple approach.

Instead, we propose the following approximation procedure. Let $\{\psi_j\}_j$ denote an orthonormal basis of $L_2([0,1])$, and recall that $\varphi_\ell$ denotes the eigenfunction of $\Gamma_X$ with eigenvalue $\lambda_\ell$, where $\lambda_1\geq \lambda_2 \geq\cdots$. 
In the supplement we show that, for all $k\geq 1$, 
\begin{equation} \label{eq:PC.new.1} \sum_{j=1}^\infty \varphi_{\ell,j} \, \langle \psi_k, \Gamma_X \psi_j\rangle\,  \, = \, \lambda_\ell\, \varphi_{\ell,k}\,, \end{equation}
where $\varphi_{\ell,j} = \langle \varphi_\ell , \psi_j \rangle$. If we take the $\psi_j$'s to be continuously differentiable and such that $\psi_j(0)=\psi_j(1)=0$, for example if $\{\psi_j\}_j$ is the Fourier sine basis, then for $j,k=1,2,\ldots$, we have
\begin{equation}
\langle \psi_k, \Gamma_X \psi_j\rangle =   {\cal M}_{j,k}  -  \sigma^2 \cdot 1\{j=k\}\,, \label{eq:rem.eq1}
\end{equation}
where $ {\cal M}_{j,k} =  \int_0^1 \psi_j'(t) \int_0^1 E  \big\{Y(t) Y(s)\big\} \psi_k'(s)\,ds\, dt$ (see the proof in the supplement). We propose to approximate   $\varphi_\ell$  by $\sum_{j=1}^M \hat\varphi_{\ell,j}  \psi_j$, with $M$ a large positive integer, where $\hat\varphi_{\ell,j}$ denotes an estimator of $\varphi_{\ell,j}$. Next we show how to compute $\hat\varphi_{\ell,1},\ldots,\hat\varphi_{\ell,M}$ from our data. First, combining \eqref{eq:PC.new.1}  and \eqref{eq:rem.eq1}, we have $\sum_{j=1}^\infty \varphi_{\ell,j} \, \big({\cal M}_{j,k}  -  \sigma^2 \cdot 1\{j=k\}\big)=\lambda_\ell\, \varphi_{\ell,k}$ so that
\begin{equation}
 \sum_{j=1}^M \varphi_{\ell,j} \, \big({\cal M}_{j,k}  -  \sigma^2 \cdot 1\{j=k\}\big) = \lambda_\ell\, \varphi_{\ell,k}+R_{k,\ell}\,,\label{lambdafk}
\end{equation}
where $R_{k,\ell}$ is a remainder term resulting from the truncation of the sum to $M$ terms.  Let $I_M$ and ${\cal M}$ denote, respectively, the $M\times M$-identity matrix and the $M\times M$-matrix whose  components are defined by $ {\cal M}_{j,k}$, $j,k=1,\ldots,M$, and let $\varPhi_\ell = (\varphi_{\ell,1},\ldots,\varphi_{\ell,M})^T$ and $R_\ell=(R_{1,\ell},\ldots,R_{M,\ell})^T$. Then \eqref{lambdafk} implies that 
$({\cal M} - \sigma^2 I_M)\, \varPhi_\ell \, = \, \lambda\, \varPhi_\ell +R_\ell$. 

Note that $|R_\ell|$ shrinks to zero as $M\to\infty$ since
$ |R_\ell|^2 \leq C_{X,1}^4 \sum_{j>M} |\varphi_{\ell,j}|^2$.
Thus, $\big({\cal M} - \sigma^2 I_M) \varPhi_\ell\approx \lambda_\ell \varPhi_\ell$, which motivates us to approximate $\varPhi_\ell$ by  the unit eigenvector $v_\ell$ of the matrix ${\cal M} - \sigma^2 I_M$ correponding to the $\ell$th largest eigenvalue. Now,  $\big({\cal M} - \sigma^2 I_M) v_\ell = \lambda_\ell v_\ell$ implies that ${\cal M}v_\ell= (\lambda_\ell  + \sigma^2)v_\ell$. Thus, $v_\ell$ is also the eigenvector of ${\cal M}$ corresponding to its $\ell$th largest eigenvalue. Of course, ${\cal M}$ is unknown but it can be estimated by
\begin{equation}
 \label{eq:Mhat} 
 \hat{{\cal M}} =  \frac1{n} \sum_{\ell=1}^n \Big\{\int_0^1 \int_0^1 \psi_j'(t) Y_\ell(t) Y_\ell(s) \psi_k'(s) ds\, dt\Big\}_{j,k=1,\ldots,M}\,. 
\end{equation}

For $\ell=1,\ldots,M$, let $\hat{v}_\ell$ denote the $M$ unit eigenvectors of $\hat{{\cal M}}$ (ordered so that the corresponding eigenvalues decrease monotonically).  We propose to estimate $\varPhi_\ell$ by  $\hat{\varPhi}_\ell= (\hat\varphi_{\ell,1},\ldots,\hat\varphi_{\ell,M})^T=\hat v_\ell$. Finally, we estimate $\varphi_\ell$ by
$
\hat{\varphi}_\ell = \sum_{j=1}^M  \hat\varphi_{\ell,j} \, \psi_j 
$.

\subsection{Choosing the parameters $M$, $m$ and $K$}\label{sec:CV}

To compute  the estimator at \eqref{eq:est} in practice, we need to choose three parameters:  
$M$, the parameter used in Section~\ref{basis} to construct the basis functions $\varphi_j$ employed to compute the projections in \eqref{eq:eps},
$m$, a parameter which dictates the dimension of our approximation of $f_Y$ by $f_Y^{[m]}$ at \eqref{fYm}, 
and $K$, the truncation parameter of our orthogonal series expansion at \eqref{eq:est}. 
Having the $\hat\varphi_j$'s close to the eigenfunctions of $\Gamma_X$ is likely to give better practical performance, but it is not necessary for the consistency of our estimator. This suggests that the choice of $M$ is not crucial and we take  $M=20$. 
By contrast, $m$ and $K$ are important smoothing parameters which influence consistency and need to be chosen with care.  
We suggest choosing $(m,K)$ by minimising the cross-validation (CV)  criterion  
\begin{equation} \label{eq:CV} \mbox{CV}(m,K) = \int \big|\hat{f}_Y(v)\big|^2 dP_V(v) - \frac{2}{n} \sum_{i=1}^n \hat{f}_Y^{(-i)}(Y_i)\,, \end{equation}
with $\hat{f}_Y^{(-i)}$ defined in the same way as the estimator at \eqref{eq:est}, except that it uses only the data $Y_1,\ldots,Y_{i-1},Y_{i+1},\ldots,Y_n$. 
To compute the integral at \eqref{eq:CV} we generate a large sample (we took a sample of size $10000$ in our numerical work) of $V_j$'s from $P_V$ and approximate the integral by the mean of the $|\hat{f}_Y(V_j)|^2$'s. 

As in standard nonparametric density estimation problems, our  cross-validation criterion can have multiple local minima and the global minimum is not necessarily a good choice. In case of multiple local minima, we choose the one that produces the smallest value of $m+K$. Moreover, when minimising $CV(K,m)$ we discard all pairs of values $(K,m)$ for which  more than 50\% of the  $\hat{f}_Y^{(-i)}$'s or of the $\hat{f}_Y$'s are negative. For the non discarded $(K,m)$'s, we replace each negative $\hat{f}_Y^{(-i)}(Y_i)$ and $\hat{f}_Y(V_j)$ by recomputing those estimators by repeatedly replacing $K$ by $K-1$ and $m$ and $m-1$ until the negative estimators become positive.

\section{Theoretical properties} \label{Theory}
In this section we derive theoretical properties of our estimator. For simplicity we derive our results in the case where the weight function $\omega_K$ in \eqref{eq:est} is equal to $1$. Similar results can be established for a more general weight function, but at the expense of even more technical proofs. In Section~\ref{sec:theory:finite} we derive an upper bound on the mean integrated squared error of our estimator which is valid for all $n$. Next, in Section~\ref{ConvRates} we derive asymptotic properties of our estimator.

\subsection{Finite sample properties}\label{sec:theory:finite}
In the next theorem, we give an upper bound on the mean integrated squared error 
$$ {\cal R}(\hat{f}_Y^{[m,K]},f_Y) =  E \int \big|\hat{f}_Y^{[m,K]}(v) - f_Y(v)\big|^2 dP_V(v)$$
of the estimator at  \eqref{eq:est} in the case where the orthonormal basis $\{\varphi_j\}_j$ and the parameters $m$ and $K$ are deterministic. Our result is non asymptotic and is valid for all $n$.

\begin{thm} \label{P:1}
Under Assumptions 1 and 2 and the selection $\omega_K\equiv 1$, we have
$ {\cal R}(\hat{f}_Y^{[m,K]},f_Y) \, \leq \, {\cal V} + {\cal B} + {\cal D}$, 
where
\begin{align*}
{\cal V} & =  \frac1n \, \exp\big(K C_{X,1}^2/\sigma^2\big) \cdot {K+m \choose K}\,, \ \ 
{\cal B}  =  \inf_{h \in {\cal H}_{m,K}} \big\|f_Y^{[m]}(\sigma\cdot) - h\big\|_{g_1}^2\,, \\
{\cal D} & =  \frac1{\sigma^2} \cdot \exp\big(C_{X,1}^2 / \sigma^2\big) \cdot \Big(\sum_{j,j'>m}  \big|\langle \varphi_j, \Gamma_X \varphi_{j'} \rangle\big|^2\Big)^{1/2} \,,
\end{align*}
and where  ${\cal H}_{m,K}$ denotes the linear hull of the $H_{k_1,\ldots,k_m}$'s for which $k_1+\cdots+k_m\leq K$.
\end{thm}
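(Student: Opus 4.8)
The plan is to split the mean integrated squared error into three pieces matching $\mathcal{V}$, $\mathcal{B}$, and $\mathcal{D}$ via a telescoping chain of intermediate targets. I would introduce two auxiliary quantities: the true sieve approximation $f_Y^{[m]}(\beta'_{V,1},\ldots,\beta'_{V,m})$ from Section~\ref{fin}, and its projection onto the finite-dimensional space $\mathcal{H}_{m,K}$ of Hermite products with total degree at most $K$. Writing $g_K$ for the best $L_{2,g_1}$-approximation to $f_Y^{[m]}(\sigma\cdot)$ inside $\mathcal{H}_{m,K}$ and $\hat f_Y^{[m,K]}$ for our estimator, the decomposition

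\begin{align*}
\big\|\hat f_Y^{[m,K]} - f_Y\big\|
\;\leq\; \underbrace{\big\|\hat f_Y^{[m,K]} - \Pi_{m,K} f_Y^{[m]}\big\|}_{\text{variance}}
\;+\; \underbrace{\big\|\Pi_{m,K} f_Y^{[m]} - f_Y^{[m]}\big\|}_{\text{bias }\mathcal{B}}
\;+\; \underbrace{\big\|f_Y^{[m]} - f_Y\big\|}_{\text{sieve error }\mathcal{D}}
\end{align*}

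suggests the shape, where all norms are in $L_2(P_V)$. Because these three targets are (conditionally) orthogonal in the right inner products, I expect to be able to bound the expected squared error by the \emph{sum} of the three squared-error contributions rather than picking up cross terms, which is what makes the clean additive bound $\mathcal{V}+\mathcal{B}+\mathcal{D}$ plausible.

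\textbf{The three terms.} The sieve-error term $\mathcal{D}$ is essentially immediate: it is exactly the bound of Lemma~\ref{L:1}(b), since by part~(a) the quantity $f_Y^{[m]}(\beta'_{V,1},\ldots,\beta'_{V,m})$ is the conditional expectation $E\{f_Y(V)\mid\mathfrak{A}_m\}$, and the $L_2(P_V)$-distance from $f_Y$ to this conditional expectation is controlled by Lemma~\ref{L:1}(b). The bias term $\mathcal{B}$ is the approximation error of $f_Y^{[m]}(\sigma\,\cdot)$ by its truncated Fourier--Hermite expansion, which after the change of variables $s=\sigma y$ turns the $L_2(P_V)$-norm into the $L_{2,g_1}(\mathbb{R}^m)$-norm and is by definition the infimum over $\mathcal{H}_{m,K}$; this is essentially a matter of matching the Hilbert-space geometry. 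The variance term $\mathcal{V}$ is where the real work lies: the estimator $\hat f_Y^{[m,K]}$ replaces each Hermite coefficient $\alpha_k = E\{\Psi_k(\beta'_{Y,1},\ldots,\beta'_{Y,m})\}$ by its empirical mean $\hat\alpha_k = n^{-1}\sum_j \Psi_k(\beta'_{Y_j,\cdot}/\sigma)$, so by Parseval in $L_{2,g_\sigma}$ the estimation error is $\sum_{k_1+\cdots+k_m\le K} \mathrm{Var}(\hat\alpha_k)$, which is at most $n^{-1}\sum_{k_1+\cdots+k_m\le K} E\{\Psi_k(\beta'_{Y,\cdot}/\sigma)^2\}$.

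\textbf{The main obstacle: controlling the variance constant.} The hard part will be bounding $\sum_{k_1+\cdots+k_m\le K} E\{H_{k_1,\ldots,k_m}(\beta'_{Y,\cdot}/\sigma)^2\}$ by $\exp(KC_{X,1}^2/\sigma^2)\binom{K+m}{K}$. The combinatorial factor $\binom{K+m}{K}$ is just the cardinality of the index set $\{(k_1,\ldots,k_m):k_i\ge 0,\ \sum k_i\le K\}$, so the substance is showing that each summand $E\{H_{k_1,\ldots,k_m}(\beta'_{Y,\cdot}/\sigma)^2\}$ is at most $\exp(KC_{X,1}^2/\sigma^2)$ uniformly over the relevant indices. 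Here I would condition on $X$: given $X$, the vector $(\beta'_{Y,1},\ldots,\beta'_{Y,m})/\sigma$ is Gaussian with mean $(\beta'_{X,1},\ldots,\beta'_{X,m})/\sigma$ and identity covariance, so I need a bound on $E\{H_{k_1,\ldots,k_m}(Z+a/\sigma)^2\}$ for a standard Gaussian $Z$ shifted by $a=(\beta'_{X,1},\ldots,\beta'_{X,m})$. The key analytic input is the classical generating-function / translation identity for Hermite polynomials, which gives moments of shifted Hermite polynomials in terms of the shift; the exponential $\exp(K C_{X,1}^2/\sigma^2)$ should emerge precisely because $\sum_j (\beta'_{X,j})^2 \le \|X'\|_2^2 \le C_{X,1}^2$ by Assumption~2, with the factor $K$ accounting for the total polynomial degree. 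Pinning down this single-index estimate and then taking the expectation over $X$ to remove the conditioning is the technical heart of the argument; everything else is bookkeeping in the Hilbert-space decomposition.
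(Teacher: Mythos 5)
Your proposal is correct and follows essentially the same route as the paper: the term ${\cal D}$ is split off exactly (no cross term) because $f_Y^{[m]}(\beta'_{V,1},\ldots,\beta'_{V,m})$ is the conditional expectation $E\{f_Y(V)\mid\mathfrak{A}_m\}$ from Lemma~\ref{L:1}, the terms ${\cal V}$ and ${\cal B}$ then separate by Parseval's identity in $L_{2,g_1}(\mathbb{R}^m)$, and the variance constant is obtained exactly as you anticipate, by conditioning on $X_1$ and applying the Hermite translation identity (the Appell-sequence property), which yields the per-index bound $\prod_l\{1+(\beta'_{X_1,l}/\sigma)^2\}^{k_l}\leq\exp(KC_{X,1}^2/\sigma^2)$ via Assumption~2, multiplied by the index count $\binom{K+m}{K}$.
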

In Theorem~\ref{P:1}, ${\cal V}$ represents a  variance term while ${\cal B}$ represents a bias term which depends on smoothness properties of $f_Y^{[m]}$. Both are typical of nonparametric estimators, but the term ${\cal D}$ is of a different type. It reflects the error of the finite-dimensional approximation of the density $f_Y$ by the function $f_Y^{[m]}$.

\subsection{Asymptotic properties} \label{ConvRates}
Next we derive asymptotic properties of our density estimator. For this, we need an additional assumption which will be used when dealing with  the term ${\cal D}$ from Theorem~\ref{P:1}: \\[2mm]

\noindent {\bf Assumption 3} \\
There exist constants $C_{X,2}, C_{X,3} \in (0,\infty)$ and  $\gamma>0$ such that
$$ \sum_{j,j'>m} \Big|\int_0^1 \varphi_j(s) \big(\Gamma_X \varphi_{j'}\big)(s) ds\Big|^2 \, \leq \, C_{X,2} \cdot \exp\big(- C_{X,3} m^{\gamma}\big)\,, \quad \forall m\in \mathbb{N}\,. $$ \\[2mm]

For example, if $X_1$ is centered and $\{\varphi_j\}_j$ is the principal component basis with eigenvalues $\lambda_1\geq \lambda_2\geq\cdots$ discussed in Section~\ref{basis}, then Assumption 3 is satisfied as soon as
$\sum_{j=1}^\infty \exp(C_{X,3}' j^\gamma)\cdot \lambda_j^2\, < \, \infty$ 
for some $C_{X,3}'>C_{X,3}$. In this case, Assumption 3 can be interpreted as an exponential decay of the eigenvalues of $\Gamma_X$; concretely Assumption 3 is satisfied if there exist some $C_{X,3}''>C_{X,3}'>C_{X,3}$ and some $C_{X,3}'''>0$ such that $\lambda_j \leq C_{X,3}''' \, \exp(- C_{X,3}'' j^\gamma / 2)$ for all integer $j\geq 1$. 

The next theorem establishes an upper bound on the convergence rates of the mean integrated squared error of our estimator $\hat{f}_Y^{[m,K]}$ as the sample size $n$ tends to infinity. We establish the upper bound uniformly over the class  ${\cal F}_X = {\cal F}_X\big(C_{X,1},C_{X,2},C_{X,3},\gamma,\{\varphi_j\}_j\big)$  of all admitted image measures of $X_1$ such that Assumptions 1 to 3 are satisfied for some deterministic orthonormal basis $\{\varphi_j\}_j$ of $L_2([0,1])$. The next three theorems consider functions in this class, which implies that they are derived under Assumptions 1 to 3.

\begin{thm} \label{T:1}
Assume that $\gamma \in (0,1)$ and select the weight function $\omega_K\equiv 1$ and the parameters $K$ and $m$ such that 
$ K = K_n = \lfloor \gamma (\log n) / \log(\log n) \rfloor$, $m  = m_n =  \lfloor \big(C_M \cdot \log n\big)^{1/\gamma}\rfloor$, 
for some finite constant $C_M > 2/C_{X,3}$. Then our estimator $\hat{f}^{[m,K]}$ satisfies
$$ \limsup_{n\to\infty} \sup_{P_X \in {\cal F}_X} \log \big\{ {\cal R}\big(\hat{f}_Y^{[m,K]},f_Y\big)\big\} / \log n \, \leq \, -\gamma\,. $$
\end{thm}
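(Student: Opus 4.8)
The plan is to apply Theorem~\ref{P:1}, which gives $\mathcal{R}(\hat{f}_Y^{[m,K]},f_Y)\leq \mathcal{V}+\mathcal{B}+\mathcal{D}$, and then to check that for the prescribed $K=K_n$ and $m=m_n$ each of the three terms is of order $n^{-\gamma+o(1)}$, with $\mathcal{D}$ in fact decaying strictly faster. Since the constants $C_{X,1},C_{X,2},C_{X,3},\gamma$ are common to the whole class $\mathcal{F}_X$, every bound below is uniform in $P_X\in\mathcal{F}_X$, so taking logarithms and dividing by $\log n$ will give the claim. I would dispose of $\mathcal{D}$ first, since it is immediate: Assumption~3 yields $\mathcal{D}\leq \sigma^{-2}\exp(C_{X,1}^2/\sigma^2)\,C_{X,2}^{1/2}\exp(-C_{X,3}m_n^{\gamma}/2)$, and since $m_n^{\gamma}\sim C_M\log n$ this is $n^{-C_{X,3}C_M/2+o(1)}$; because $C_M>2/C_{X,3}$ and $\gamma<1$ the exponent $C_{X,3}C_M/2$ exceeds $1>\gamma$, so $\mathcal{D}$ is negligible.

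For the variance term I would write $\log\mathcal{V}=-\log n+KC_{X,1}^2/\sigma^2+\log\binom{K+m}{K}$. The middle term is $O(\log n/\log\log n)=o(\log n)$ because $K_n\sim\gamma\log n/\log\log n$. The real point is the binomial factor. As $1/\gamma>1$, the choice $m_n\sim(\log n)^{1/\gamma}$ dominates $K_n$, so $\binom{K+m}{K}\leq(e(K+m)/K)^K$ gives $\log\binom{K+m}{K}\leq K\{1+\log(m/K)\}(1+o(1))$. Substituting $\log m_n\sim\gamma^{-1}\log\log n$ and $\log K_n\sim\log\log n$ produces $\log(m_n/K_n)\sim\gamma^{-1}(1-\gamma)\log\log n$, whence $\log\binom{K_n+m_n}{K_n}\sim(1-\gamma)\log n$ and $\log\mathcal{V}\sim-\gamma\log n$. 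The one thing to watch is to retain the $-K\log K$ contribution coming from $K!$; dropping it would wrongly suggest $\mathcal{V}=n^{o(1)}$.

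The main obstacle is the bias term $\mathcal{B}$, which equals the tail $\sum_{|k|>K}|c_k|^2$ (writing $|k|=k_1+\cdots+k_m$ and abbreviating $c_k=c_{k_1,\ldots,k_m}$) of the Fourier--Hermite coefficients of $f_Y^{[m]}(\sigma\,\cdot)$. Applying the Hermite generating identity $\exp(at-a^2/2)=\sum_{\ell\geq0}(a^\ell/\sqrt{\ell!})H_\ell(t)$ to the product form of $f_Y^{[m]}$ in \eqref{fYm}, I would read off $c_k=E\prod_{j=1}^m\{(\beta'_{X,j}/\sigma)^{k_j}/\sqrt{k_j!}\}$. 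The crucial device is to inflate the tail by a factor $z^{|k|}$ with $z\geq1$ and to introduce an independent copy $\tilde X$: expressing $|c_k|^2$ as a product over the two copies and summing the resulting series term by term gives $\sum_k|c_k|^2 z^{|k|}=E\exp\{z\sigma^{-2}\sum_{j=1}^m\beta'_{X,j}\beta'_{\tilde X,j}\}$, and Cauchy--Schwarz together with Assumption~2 bounds the exponent by $zC_{X,1}^2/\sigma^2$. Hence $\sum_k|c_k|^2 z^{|k|}\leq\exp(zC_{X,1}^2/\sigma^2)$ for every $z$, and therefore $\mathcal{B}\leq z^{-K}\exp(zC_{X,1}^2/\sigma^2)$. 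Optimising at $z=K\sigma^2/C_{X,1}^2$, which exceeds $1$ once $K_n$ is large, yields $\log\mathcal{B}\leq-K\log K+O(K)\sim-\gamma\log n$, again through $K_n\log K_n\sim\gamma\log n$. The care needed here is to justify the interchange of expectation and the double sum by absolute convergence; the same computation with $z=1$ also reconfirms $f_Y^{[m]}(\sigma\,\cdot)\in L_{2,g_1}$.

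Collecting the three estimates gives $\mathcal{R}(\hat{f}_Y^{[m,K]},f_Y)\leq n^{-\gamma+o(1)}$ uniformly over $\mathcal{F}_X$, so that $\limsup_{n\to\infty}\sup_{P_X\in\mathcal{F}_X}\log\{\mathcal{R}(\hat{f}_Y^{[m,K]},f_Y)\}/\log n\leq-\gamma$, as required.
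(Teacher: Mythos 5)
Your proposal is correct, and for the terms ${\cal V}$ and ${\cal D}$ it follows essentially the same route as the paper: the paper also starts from Theorem~\ref{P:1}, handles ${\cal D}$ exactly as you do via Assumption~3 and $C_M>2/C_{X,3}$, and bounds ${\cal V}$ through ${K+m\choose K}\lesssim e^{O(K)}(m/K)^K$ (via Stirling rather than your $(e(K+m)/K)^K$ bound, but the asymptotics $K\log(m/K)\sim(1-\gamma)\log n$ are identical, and your remark about not discarding the $-K\log K$ from $K!$ is exactly the point). Where you genuinely diverge is the bias term ${\cal B}$. The paper's Lemma~\ref{L:smooth} proceeds by Taylor-expanding the exponential inside $f_Y^{[m]}$ to order $K$, observing that the degree-$K$ Taylor polynomial lies in ${\cal H}_{m,K}$, and then controlling the Lagrange remainder through a fairly involved computation of Gaussian moments of $\kappa_m\delta$; this yields ${\cal B}={\cal O}\{(C_{X,1}/\sigma)^{2K}(2C_{X,1}/\sigma+\sqrt2)^{2K}/(K+1)!\}$, i.e.\ $\log{\cal B}\leq\mathrm{const}\cdot K-K\log K$. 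You instead identify ${\cal B}$ with the exact Hermite-coefficient tail $\sum_{|k|>K}|c_k|^2$, read off $c_k=E\prod_j\{(\beta'_{X,j}/\sigma)^{k_j}/\sqrt{k_j!}\}$ from the generating identity, and bound the tail by the Chernoff-type device $\sum_k|c_k|^2z^{|k|}=E\exp\{z\sigma^{-2}\langle X',\tilde X'\rangle\}\leq\exp(zC_{X,1}^2/\sigma^2)$ with an independent copy $\tilde X$, optimising at $z=K\sigma^2/C_{X,1}^2$. This gives the same $-K\log K+O(K)$ and hence the same rate; your route is arguably cleaner in that it exploits the exact coefficients and replaces the remainder estimates by a one-line Cauchy--Schwarz (the interchange of $E$ and the sums is justified by the absolute convergence you note, since $\sum_j|a_j\tilde a_j|\leq C_{X,1}^2/\sigma^2$ a.s.), while the paper's Taylor argument is more robust to not knowing the coefficients explicitly. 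Either way the conclusion $\limsup_n\sup_{P_X\in{\cal F}_X}\log{\cal R}/\log n\leq-\gamma$ follows, with uniformity over ${\cal F}_X$ coming from the fact that all constants in the three bounds depend only on $C_{X,1},C_{X,2},C_{X,3},\gamma,\sigma$.
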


Theorem~\ref{T:1} shows that the risk of our estimator converges to zero faster than  ${\cal O}(n^{-\gamma'})$ for any $\gamma'<\gamma < 1$. In particular, our estimator achieves polynomial convergence rates, which is usually impossible in problems of nonparametric functional regression or density estimation. In standard problems of that type where the data range over an infinite-dimensional space, only logarithmic or sub-algebraic rates can usually be achieved   (see e.g.~Mas,~2012, Chagny and Roche,~2014 and Meister,~2016). In our case the dimension of the data is infinite as well; however the density $f_Y$ forms an infinite-dimensional Gaussian mixture and its smoothness degree is sufficiently high to overcome the difficulty caused by high dimensionality.

The next  theorem provides an asymptotic lower bound  for the problem of estimating $f_Y$ nonparametrically. For simplicity we restrict to the case where $C_{X,1}=1$.

\begin{thm} \label{T:2}
Assume that $\gamma \in (0,1)$ and let $C_{X,1} = 1$ in Assumption 2. 
Moreover, assume that the orthonormal basis $\{\varphi_j\}_j$ of $L_2([0,1])$ is such that all $\varphi_j$'s are continuously differentiable. Then, for any sequence  $(\hat{f}_n)_n$ of estimators of $f_Y$ computed from the data $Y_1,\ldots,Y_n$, we have
$$ \liminf_{n\to\infty} \sup_{P_X \in {\cal F}_X} \log \big\{ {\cal R}\big(\hat{f}_n,f_Y\big)\big\} / \log n \, \geq \, -\gamma + (\gamma-1)^2 / (\gamma-2)\,. $$
\end{thm}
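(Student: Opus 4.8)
The plan is to prove the minimax lower bound by the usual two-step recipe: reduce to a finite-dimensional sub-experiment, then run a many-hypotheses (Fano / fuzzy two-point) argument. First I would restrict attention to measures $P_X$ supported on functions $x=\sum_{j\le m}c_j\Phi_j$ with $\Phi_j(t)=\int_0^t\varphi_j(s)\,ds$; since the $\varphi_j$'s are continuously differentiable, each such $x$ lies in $C_{0,2}([0,1])$, so Assumption~1 holds. For any two such measures the Radon–Nikodym derivative of $P_Y$ against $P_V$ reduces, via integration by parts in \eqref{eq:density}, to $\int\exp\{\sigma^{-2}\sum_{j\le m}c_j\beta'_{V,j}-\tfrac1{2\sigma^2}\sum_{j\le m}c_j^2\}\,dP_{X,m}(c)$, which depends on $V$ only through $(\beta'_{V,1},\dots,\beta'_{V,m})$. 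Hence the experiment collapses to the $m$-dimensional Gaussian mixture model in which one observes $n$ i.i.d.\ copies of $\beta'_Y=c+\sigma Z$, $Z\sim N(0,I_m)$, $c\sim P_{X,m}$. Because every such $f_Y$ depends on $V$ only through the first $m$ coordinates and the $H_{k_1,\dots,k_m}(\cdot/\sigma)$ are orthonormal in $L_{2,g_\sigma}$, the squared loss between two candidates is $\|f_Y^{(1)}-f_Y^{(0)}\|_{L_2(P_V)}^2=\|f_Y^{[m],(1)}-f_Y^{[m],(0)}\|_{g_\sigma}^2=\sum_{k}(\alpha_k^{(1)}-\alpha_k^{(0)})^2$, where the Fourier–Hermite coefficients satisfy $\alpha_k=\sigma^{-|k|}(\prod_j k_j!)^{-1/2}E_{P_X}[\prod_j c_j^{k_j}]$. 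Thus the loss is exactly the squared $\ell_2$-distance of the normalised moment sequences of the mixing measures.

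The crucial structural observation I would exploit is that Assumption~3 constrains only the second-order object $\Gamma_X$, i.e.\ the covariances $E[c_jc_{j'}]$, whereas both the loss and the identifiability are governed by the \emph{entire} moment sequence. I would therefore construct a family $\{P_X^{(\theta)}\}_{\theta\in\{-1,1\}^D}$ of measures sharing one fixed covariance operator, chosen with eigenvalues decaying like $\exp(-C_{X,3}j^\gamma/2)$ so that Assumptions~2 and~3 hold uniformly over the family, but differing through a sign pattern $\theta$ imposed on a large collection of \emph{high-order} (joint) moments $E[\prod_j c_j^{k_j}]$ with $|k|$ large. In each of the $m\asymp(\log n)^{1/\gamma}$ active coordinates I would use a symmetric spike law on $\{0,\pm1\}$, perturbed so as to keep all second moments fixed while flipping a prescribed high-order moment according to $\theta$; the constraint $\|X'\|_2\le1$ a.s.\ holds because $\sum_j c_j^2\le1$ deterministically. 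Each flip moves a single $\alpha_k$ by $b_k=\sigma^{-|k|}(\prod_j k_j!)^{-1/2}\Delta_k$, so the $\theta$-dependent part of $f_Y^{[m]}$ is $h_\theta=\sum_k\theta_k b_k\Psi_k$ with $\{\Psi_k\}$ orthonormal.

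The indistinguishability I would control through the $\chi^2$-divergence between the mixture $\int P_Y^{(\theta),\otimes n}\,d\pi(\theta)$ and a reference product measure. Using $1+\langle h_\theta,h_{\theta'}\rangle_{g_\sigma}=E_V[f_Y^{(\theta)}f_Y^{(\theta')}]$ together with the Gaussian-derivative identity $\int (g_\sigma^{(r)})^2/g_\sigma=r!\,\sigma^{-2r}$, which quantifies how strongly an order-$r$ moment perturbation of the mixing law is damped in the observed mixture, the mixture $\chi^2$ factorises over modes into a product of $\cosh$-type factors and stays bounded provided $n^2\sum_k b_k^4\lesssim1$. Fano's inequality (or Tsybakov's two-fuzzy-hypotheses bound) then gives ${\cal R}\gtrsim\sum_k b_k^2$ subject to this constraint and to the feasibility caps on the $b_k$'s coming from the bounded support and the frozen covariance. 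Optimising the truncation order of the perturbed moments and the dimension $m$ against these competing constraints should produce the stated exponent; note that $-\gamma+(\gamma-1)^2/(\gamma-2)$ simplifies to $1/(\gamma-2)$, which is the target decay $n^{1/(\gamma-2)}$.

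I expect the main obstacle to be twofold. First, the construction is a constrained truncated moment problem: one must certify that the sign-indexed perturbations of high-order moments are genuinely the moments of probability measures on $\{\|x'\|_2\le1\}$ that leave $\Gamma_X$—and hence Assumption~3—exactly unchanged. Keeping the covariance frozen while injecting independent signs into many higher moments is delicate and is precisely what forces spikes with several support points per coordinate. Second, and this is where the gap between the lower exponent $1/(\gamma-2)$ and the estimator's exponent $-\gamma$ originates, one must evaluate the high-order Hellinger and $\chi^2$ affinities against a genuinely non-Gaussian null, whose heavier-than-Gaussian tails make high-order perturbations much harder to detect than a leading-order computation against $g_\sigma$ would suggest; tracking the resulting $r!$-type gains precisely is the quantitative heart of the argument.
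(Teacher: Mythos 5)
Your overall architecture coincides with the paper's: reduction to measures supported on $\mathrm{span}\{\Phi_j\}_{j\le m}$ with $\Phi_j(t)=\int_0^t\varphi_j$, so that the experiment collapses to an $m$-dimensional Gaussian location mixture; the identification of the $L_2(P_V)$ loss with a weighted $\ell_2$ distance of moment sequences of the mixing law (the paper packages exactly this as the exponential-kernel inner product $\langle \tilde{P}_{\theta'},\tilde{P}_{\theta''}\rangle_{\exp}=\iint e^{x^\dagger x'/\sigma^2}d\tilde{P}_{\theta'}d\tilde{P}_{\theta''}$ in (\ref{eq:T2.2}), whose Taylor expansion is your normalised moment inner product); a sign-indexed family with frozen covariance so that Assumption~3 holds uniformly while only high-order moments are perturbed; a $\chi^2$/Hellinger control of indistinguishability; and the final calibration $m\asymp(\log n)^{1/\gamma}$, perturbation order $K\asymp\log n/\log\log n$, yielding the exponent $1/(\gamma-2)$, which you correctly identify as the simplification of the stated bound.

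The genuine gap is the one you flag yourself but do not close: you never exhibit the perturbation family, and without it there is no proof. This is not a routine verification --- it is the substantive content of the argument, and the paper resolves it with a specific device you should compare against your ``several support points per coordinate'' idea. The signs $\theta_{\cal K}$ are indexed not by coordinates but by $K$-element subsets ${\cal K}\subseteq\{1,\dots,m\}$, and the perturbation $f_{\cal K}$ is a product of a fixed \emph{odd} function $f=1_{(0,1/2]}-1_{[-1/2,0)}$ (suitably rescaled) over the coordinates in ${\cal K}$ and of $|f|$ elsewhere. Oddness in at least three coordinates kills all first and second moments of the perturbation, so $\Gamma_X$ is exactly frozen; the same odd/even structure makes the $f_{\cal K}$ automatically orthogonal under $\langle\cdot,\cdot\rangle_{\exp}$ (see (\ref{eq:T2.3})), which is what lets the risk decouple over the ${m\choose K}$ bits; and positivity of $f_\theta$ is immediate since the base density is $\binom{m}{K}^{-1}\sum_{\cal K}|f_{\cal K}|$. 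Two further points where your sketch would run into trouble: (i) your worry about evaluating $\chi^2$ affinities ``against a genuinely non-Gaussian null with heavier tails'' is dissolved in the paper by placing an atom of mass $1-\eta$ at the origin in $\tilde{P}_\theta$, which forces $f_{Y,\theta}\ge 1-\eta$ pointwise and hence bounds $\chi^2(f_{Y,\theta({\cal K},+)},f_{Y,\theta({\cal K},-)})$ directly by $(1-\eta)^{-1}\|\tilde{P}_{\theta({\cal K},+)}-\tilde{P}_{\theta({\cal K},-)}\|_{\exp}^2$; the scale $\eta\asymp\sqrt{m}\,e^{-C_{X,3}m^\gamma/2}$ in (\ref{eq:T2.100}) is then exactly what Assumption~3 permits and is the source of the $D_M C_{X,3}$ contribution to the exponent. (ii) A per-coordinate spike law on $\{0,\pm1\}$ does not satisfy $\sum_j c_j^2\le 1$ almost surely unless you rescale coordinates by $1/\sqrt{m}$-type factors, as the paper does with the $\sqrt{K}$ and $\sqrt{m-K}$ scalings in $f_{\cal K}$. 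Finally, the paper replaces Fano/fuzzy hypotheses by the exact Bayes-risk identity (\ref{eq:T2.10}) derived in Section~\ref{sec:side}; this is a cosmetic rather than substantive difference from your information-theoretic step, but it is the explicit construction, not the information bound, that your proposal is missing.
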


We learn from the theorem that, in this problem, no nonparametric estimator can reach the  parametric squared  convergence rate $n^{-1}$. This is significantly different from the simpler problem of nonparametric estimation of one-dimensional Gaussian mixtures, where the parametric rates are achievable up to a logarithmic factor (see Kim,~2014). Note that the upper bound in Theorem~\ref{T:1} is usually larger than  the lower bound in Theorem~\ref{T:2}, although the two bounds are very close to each other for $\gamma$ close to $1$. 
Rather than our estimator being suboptimal, we suspect that our lower bound is not sharp enough. Deriving the exact minimax rates seems a very challenging open problem for future research. 

As is standard in nonparametric estimation problems requiring the choice of smoothing parameters, Theorem~\ref{T:1} was derived under a deterministic choice of $m$ and $K$. Next we establish an asymptotic result in the case where  $(\hat{m},\hat{K})$ is chosen by cross-validation as at  (\ref{eq:CV}), where minimisation is performed over the mesh
\begin{equation} \label{eq:T.3.G} G \, = \, \big\{\lfloor \log n \rfloor,\ldots,\lfloor (\log n)^{1/\gamma_0}\rfloor\big\} \times \big\{1,\ldots,\lfloor (\log n) / \log(\log n) \rfloor\big\}\,, \end{equation}
for some constant $\gamma_0 \in (0,\gamma)$.  The following theorem shows that the convergence rates from Theorem~\ref{T:1} can be maintained at least in a weak sense. 
\begin{thm} \label{T:3}
Our estimator $\hat{f}_Y^{[\hat{m},\hat{K}]}$, where $\omega_K\equiv 1$ and $(\hat{m},\hat{K})$ is selected by cross-validation over the mesh $G$ at \eqref{eq:T.3.G}, satisfies
$$ \lim_{n\to\infty} \sup_{P_X \in {\cal F}_X} P\Big\{ n^{\gamma} \int \big|\hat{f}_Y^{[\hat{m},\hat{K}]}(x) - f_Y(x)\big|^2 dP_V(x) \geq n^{d}\Big\} \, = \, 0\,, $$
for all $\gamma \in [\gamma_0,1)$ and $d>0$.
\end{thm}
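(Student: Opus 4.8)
The plan is to show that the cross-validation selector $(\hat m,\hat K)$, with high probability uniformly over $\mathcal{F}_X$, produces a risk no larger (up to a factor $n^d$) than the risk at the deterministic oracle choice $(m_n,K_n)$ of Theorem~\ref{T:1}, for which we already know the risk is $o(n^{-\gamma_0})$ for every $\gamma_0<\gamma$. The key is that the CV criterion $\mathrm{CV}(m,K)$ at \eqref{eq:CV} is, up to the additive constant $\int |f_Y|^2\,dP_V$ which does not depend on $(m,K)$, an empirical proxy for the true loss $\int|\hat f_Y^{[m,K]}(v)-f_Y(v)|^2\,dP_V(v)$. So the first step is to write, for each grid point,
\begin{equation*}
\mathrm{CV}(m,K) + \int |f_Y|^2\,dP_V
= \int \big|\hat f_Y^{[m,K]}(v)\big|^2\,dP_V(v)
- \frac{2}{n}\sum_{i=1}^n \hat f_Y^{(-i)}(Y_i),
\end{equation*}
and to decompose this into the true loss plus a stochastic fluctuation term $\Delta(m,K)$. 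I would then bound $\sup_{(m,K)\in G}|\Delta(m,K)|$ so that it is negligible compared to $n^{-\gamma_0}$; since the grid $G$ in \eqref{eq:T.3.G} has only polylogarithmically many points, a union bound over $G$ loses only a $\log$-power, which is absorbed into the $n^d$ slack.

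The heart of the argument is controlling these fluctuations. The term $\int|\hat f_Y^{[m,K]}|^2\,dP_V$ is a V-statistic (a bilinear form $n^{-2}\sum_{i,j}\langle$ Hermite features of $Y_i,Y_j\rangle$) in the $\hat\alpha_k$'s, while $n^{-1}\sum_i \hat f_Y^{(-i)}(Y_i)$ is a leave-one-out U-statistic whose expectation is $\langle f_Y^{[m]}, E\hat f_Y^{[m,K]}\rangle_{g_1}$ minus lower order. The plan is to replace the CV objective by its population analogue (the expected loss) and bound the difference using the variance of these U/V-statistics. The crucial ingredient is that the variance proxy is governed by the same quantity $\exp(KC_{X,1}^2/\sigma^2)\binom{K+m}{K}/n$ that appears as the variance term $\mathcal V$ in Theorem~\ref{P:1}; on the grid $G$ this quantity stays $n^{o(1)}$, because $K\le \lfloor(\log n)/\log\log n\rfloor$ forces $\exp(KC_{X,1}^2/\sigma^2)=n^{o(1)}$ and the binomial $\binom{K+m}{K}$ with $m\le (\log n)^{1/\gamma_0}$ is likewise subpolynomial. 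Moments (or a Bernstein/Rosenthal bound for the degenerate parts of the U-statistics, combined with Markov's inequality) then give that $\sup_{G}|\Delta(m,K)| = o_P(n^{-\gamma_0})$ uniformly over $\mathcal F_X$; note the supremum over $P_X\in\mathcal F_X$ is handled because every bound above is expressed through the class constants $C_{X,1},C_{X,2},C_{X,3},\gamma$ only.

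From here the standard oracle-inequality logic closes the proof: by definition of $(\hat m,\hat K)$ we have $\mathrm{CV}(\hat m,\hat K)\le \mathrm{CV}(m_n,K_n)$, and translating both sides back to true loss via the fluctuation bound gives
\begin{equation*}
\int\big|\hat f_Y^{[\hat m,\hat K]}-f_Y\big|^2 dP_V
\;\le\;
\int\big|\hat f_Y^{[m_n,K_n]}-f_Y\big|^2 dP_V
+ 2\sup_{G}|\Delta(m,K)|,
\end{equation*}
whose right-hand side is $o_P(n^{-\gamma_0})$, hence $o_P(n^{-\gamma})$ up to the $n^d$ factor for any $d>0$ and any $\gamma\in[\gamma_0,1)$. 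I expect the main obstacle to be the fluctuation analysis of the leave-one-out U-statistic term, specifically showing that the cross term between the estimation error and the deterministic sieve-approximation error $\mathcal D$ does not inflate the variance; this requires carefully separating the Hoeffding components of the U-statistic and applying a concentration bound to the fully degenerate part while keeping all constants uniform over $\mathcal F_X$. The additional technical nuisance is handling the tie-breaking and negativity-correction rules described after \eqref{eq:CV}, but these only restrict the grid and can be shown not to exclude the oracle point $(m_n,K_n)$ for large $n$, so they do not affect the asymptotics.
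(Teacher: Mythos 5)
Your overall skeleton (treat $\mathrm{CV}(m,K)+\|f_Y\|_{P_V}^2$ as an empirical proxy for the loss, compare with the oracle point $(m_n,K_n)$ of Theorem~\ref{T:1}, union-bound over the polylogarithmic grid) is the same strategy the paper follows, but the quantitative step you rely on --- $\sup_{(m,K)\in G}|\Delta(m,K)|=o_P(n^{-\gamma_0})$ --- is not attainable, for two separate reasons. First, the non-degenerate (H\'ajek) component of the fluctuation, which in the paper's notation is $\Delta_2(m,K)=n^{-1}\sum_i\sum_{{\bf k}}\{E\,\Xi(1,m,{\bf k})\}\,\overline{\Xi}(i,m,{\bf k})$, is a centred average of i.i.d.\ variables with nondegenerate variance and is of exact order $n^{-1/2}$ in general; this is not $o_P(n^{-\gamma_0})$ once $\gamma_0>1/2$, and the theorem must cover all $\gamma_0\in(0,\gamma)$ with $\gamma$ up to $1$. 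Second, your claim that $\binom{K+m}{K}$ is subpolynomial on $G$ is wrong: at the corner $m\asymp(\log n)^{1/\gamma_0}$, $K\asymp(\log n)/\log\log n$ one has $\log\binom{K+m}{K}\asymp K\log(m/K)\asymp(1/\gamma_0-1)\log n$, so the variance proxy is polynomially large in $n$, and for small $\gamma_0$ the degenerate U-statistic part $\Delta_1(m,K)$ at such grid points need not be $o_P(n^{-\gamma_0})$ at all. An absolute uniform fluctuation bound over $G$ therefore cannot close the argument.

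The paper circumvents both obstacles. For the linear part it never bounds $\Delta_2(m,K)$ alone but only the difference $\Delta_2(m,K)-\Delta_2(m_n,K_n)$, which involves only Hermite coefficients indexed by the symmetric difference of ${\cal K}(m,K)$ and ${\cal K}(m_n,K_n)$; by Parseval the relevant squared-coefficient mass is controlled by the bias and sieve-approximation terms ${\cal B}+{\cal D}^*$, which are small. This cancellation is the heart of the adaptivity proof and is absent from your plan. For the degenerate part it uses a self-normalized (relative) bound rather than a uniform one: it restricts attention to the set $G'$ of grid points whose risk exceeds twice the oracle risk, bounds $P(\hat m=m,\hat K=K)$ by $2\,\{{\cal R}(\hat f_Y^{[m,K]},f_Y)\}^{-1}\{E\Delta_4^2\}^{1/2}$, and pairs the upper bound on $E\Delta_4^2$ with a matching \emph{lower} bound on the risk (equation \eqref{eq:CV.low}, containing ${\cal V}_n^*(m,K)=n^{-1}\binom{K+m}{K}$), so that grid points with large fluctuations automatically have even larger risk and the ratio tends to zero. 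You would need to incorporate both devices --- the pairwise cancellation in $\Delta_2$ and the risk-normalization via a lower bound on ${\cal R}$ --- to make your proposal into a proof.
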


\section{Simulation results} \label{Sim} 
To illustrate the performance of our density estimation procedure, we performed simulations in different settings. 
For a grid of $T=101$ points $0=t_0<t_1<\ldots<t_{T}=1$ equispaced by $\Delta t=1/(T-1)$, we generated data 
$Y_i(t_k)=\sum_{j=1}^{J} \sqrt \lambda_j Z_{ik}\ \phi_j(t_k) +\sigma\, W_i(t_k)$, 
where the $Z_{ik}$'s are i.i.d., each $Z_{ik}$ is the average of the two independent $U[-.1,.1]$ random variables,   $W_i(t_0)=0$ and, for $k=1,\ldots,T$, $W_i(t_k)=W_i(t_{k-1}) + \epsilon_{ik}$, 
where the $\epsilon_{ik}$'s are i.i.d.~$\sim N(0,\Delta t)$.
We considered five settings: 
(i) $J=20$, $\sigma=0.1$, $\lambda_j=\exp(-j)$ and $\phi_j(t)=\sqrt 2\sin (\pi t j)$; (ii) same as (i) but with $J=40$; (iii) same as (ii) but with $\sigma=0.075$; (iv)  same as (i) but with $\sigma=0.075$, $\phi_j(t)=\sqrt 2\cos (\pi t j)\kappa(t)$, $\kappa(t)=2\exp(10 t)/\{1+\exp(10 t)\}-1$;  (v) same as (i) but with  $\sigma=0.075$,  $\phi_j(t)=\sqrt 2\sin (\pi t j)\kappa(t)$.

 In each case we generated $B=200$ samples of $Y_i(t_k)$'s, of sizes $n=500$, $1000$, $2000$ and $5000$. Then, for $b=1,\ldots,B$, using the $b$th sample of $Y_i(t_k)$'s, we computed our density estimator $\hat{f}_Y^{[m,K]}(V)$ at \eqref{eq:est} for $10^4$ functions $V$ generated from the same distribution as $\sigma W$, where $m$ and $K$ were chosen by cross-validation by minimisation of \eqref{eq:CV} and where we took the weight function $\omega_K(x)=1-x/(K+1)$. The basis functions $\varphi_j$ were computed as in Section~\ref{basis} with $M=20$ and $\psi_j(t)=\sqrt 2 \sin(\pi tj)$; we denote by DM the resulting estimator. Each time the $m$ and $K$ selected by CV produced a negative estimator $\hat f_Y(v)$  for a new data curve $v$, for that curve $v$ we repeatedly replaced, $K$ by $K-1$ and $m$ and $m-1$ until the resulting value of $(m,K)$ was such that $\hat f_Y(v)>0$. 

In each case we also computed the estimator of Dabo-Niang~(2004a) with our adaptive basis of  $\varphi_j$'s, which we denote by DN. We chose $K$ by minimisation of the cross-validation criterion at \eqref{eq:CV}, replacing there our estimator by this estimator and $(m,K)$ by $K$. As for our estimator, each time the selected value of $K$ produced a negative estimator for a new curve $v$, we replaced, for that curve $v$, $K$ by the largest value smaller or equal to $K$ which produced a positive estimator.

\begin{table}[h]
\small
	\centering
\caption{Simulation results for density estimation: $10^4 \times$ median [first quartile, second quartile] of $2\times 10^6$ values of the SE.}
\begin{tabular}{cc|llll}
Model&Method&\multicolumn{1}{ c }{$n=500$}&\multicolumn{1}{ c }{$n=1000$}&\multicolumn{1}{ c }{$n=2000$}&\multicolumn{1}{ c }{$n=5000$}\\
 \hline

(i)&DM&635[145,2242]&492[120,1660]&395[103,1252]&316[86,953]\\
&DN&891[171,4122]&800[166,3439]&664[125,2970]&527[100,2271]\\
(ii)&DM&683[152,2427]&506[123,1732]&409[108,1293]&343[94,1051]\\
&DN&911[179,4133]&823[168,3568]&659[124,2990]&544[101,2420]\\
(iii)&DM&1134[237,4538]&898[188,3529]&813[175,3237]&784[165,3197]\\
&DN&1375[209,8046]&1200[186,7325]&1081[174,6611]&1025[177,5574]\\
(iv)&DM&908[194,3788]&801[172,3158]&744[154,3135]&590[124,2399]\\
&DN&1468[232,8351]&1151[183,6878]&1097[190,6514]&1052[196,5460]\\
(v)&DM&849[187,3287]&751[163,2812]&654[143,2500]&565[122,2273]\\
&DN&1097[170,6389]&1024[172,5817]&914[160,5133]&865[160,4309]\\

 \hline   
  \end{tabular}
  \label{table:SIM}
\end{table}

\begin{table}[h]
\small
	\centering
\caption{Average computational time (in seconds) for computing one density estimator (including the CV choice of smoothing parameters).}
\begin{tabular}{cc|cccc}
Model&Method&\multicolumn{1}{ c }{$n=500$}&\multicolumn{1}{ c }{$n=1000$}&\multicolumn{1}{ c }{$n=2000$}&\multicolumn{1}{ c }{$n=5000$}\\
 \hline

(i)&DM&94&114&130&198\\
&DN&42&46&54&77\\
(ii)&DM&95&113&135&200\\
&DN&49&55&68&96\\
(iii)&DM&102&116&138&218\\
&DN&50&53&71&97\\
(iv)&DM&104&110&127&191\\
&DN&46&47&59&82\\
(v)&DM&91&130&125&182\\
&DN&41&47&65&100\\

 \hline   
  \end{tabular}
  \label{table:TIME}
\end{table}

We also considered the kernel density estimator  of Dabo-Niang~(2004b), which requires the choice of a bandwidth. To choose it in practice we considered several versions of cross-validation and a nearest-neighbour bandwidth version of the estimator. However we encountered major numerical issues with denominators getting too close to zero and did not manage to obtain reasonable results. Therefore we do not consider this estimator in our numerical work. 


The results of our simulations are summarised in Table~\ref{table:SIM} where, for each case and each sample size $n$ we present $10^4$ times the median and the first and third quartiles of the squared error $\textrm{SE}=\{\hat f_Y(V)-f_Y(V)\}^2$  computed for the $200\times 10^4$  $V$ values. As expected by the theory, both estimators improved as sample size increased, and overall our estimator worked significantly better than Dabo-Niang's~(2004a) estimator. In Table~\ref{table:TIME}, for our estimator and that of  Dabo-Niang~(2004a), we also show the average  time (in seconds and  averaged over 10 simulated examples)  required to compute one density estimator and its associated data-driven smoothing parameters. Recall that our estimator requires the choice by CV of two smoothing parameters $m$ and $K$ whereas  that of  Dabo-Niang~(2004a) requires to choose one smoothing parameter $K$. It is unsurprising then that our estimator requires longer computational time: this is the price to pay for the additional accuracy brought by choosing, in a data-driven way, two parameters instead of one.

\section{Proofs} \label{Proofs}
\subsection{Side results}\label{sec:side}

To prove \eqref{eq:PC.new.1}, note that since $\varphi_\ell\in L_2([0,1])$, we can write $\varphi_\ell=\sum_{j=1}^\infty \varphi_{\ell,j}  \psi_j$. Since $\Gamma_X \varphi_\ell=\lambda_\ell \varphi_\ell$, we deduce that
$ \sum_{j,k=1}^\infty \varphi_{\ell,j} \, \langle \psi_k, \Gamma_X \psi_j\rangle\, \psi_k \, = \, \lambda_\ell \sum_{k=1}^\infty \varphi_{\ell,k}\, \psi_k$. Multiplying both sides of this equality by $\psi_k$ and taking the integral we obtain \eqref{eq:PC.new.1}.

To prove \eqref{eq:rem.eq1}, note that,  using Fubini's theorem and integration by parts, we have
\begin{align*}
\langle \psi_k, \Gamma_X \psi_j\rangle=& \int_0^1 \psi_k(t) \big(\Gamma_X \psi_j\big)(t) \,dt  = E \Big\{\int_0^1 \psi_k(t) X'(t) \,dt \, \int_0^1 \psi_j(s) X'(s)\, ds\Big\} \notag\\
=&  \int_0^1 \psi_k'(t) \int_0^1 \big[E \big\{X(t) X(s)\big\}\big] \psi_j'(s) \,ds\, dt \notag \\
=&  \int_0^1 \psi_k'(t) \int_0^1 \Big(\big[E \big\{Y(t) Y(s)\big\}\big]  -\sigma^2\min(s,t) \Big)  \psi_j'(s) \,ds\, dt\notag \\
=&  \int_0^1 \psi_k'(t) \int_0^1 E \big\{Y(t) Y(s)\big\} \psi_j'(s)\, ds\, dt  -  \sigma^2 \int_0^1 \psi_k(t) \psi_j(t)\, dt\notag\\
=&   {\cal M}_{j,k}  -  \sigma^2\cdot 1\{j=k\}\,, 
\end{align*}
where we used the fact that $ \int_0^1 \psi_k'(t) \int_0^1 \min(s,t) \psi_j'(s)\, ds\, dt \, = \, \int_0^1 \psi_k(t) \psi_j(t) \,dt$.

In order to provide a more general/ abstract view of a major step (\ref{eq:T2.10}) in the proof of Theorem~\ref{T:2}, we mention that the supremum of a statistical risk $E_\theta \|\hat{\theta}-\theta\|^2$ over all $\theta\in\Theta$ is estimated from below by a Bayesian risk with respect to some a-priori distribution $Q$ on the parameter space $\Theta$. Therein $\Theta$ is a subset of a separable Hilbert space with the norm $\|\cdot\|$. Moreover impose that the data distribution has the density $f(\theta;\cdot)$ with respect to some dominating $\sigma$-finite measure $\mu$ on the action space $\Omega$. In order to calculate the smallest Bayesian risk, consider the classical argument that
\begin{align} \nonumber
E_Q \, & E_\theta \, \|\hat{\theta}-\theta\|^2 \, = \, \iint \|\hat{\theta}(\omega) - \theta\|^2 f(\theta;\omega) d\mu(\omega) dQ(\theta) \\  \nonumber
& \, = \, \iint \big\|\big(\hat{\theta}(\omega) - \tilde{\theta}(\omega)\big) + \big(\tilde{\theta}(\omega) - \theta\big)\big\|^2 f(\theta;\omega) d\mu(\omega) dQ(\theta) \\  \nonumber
& \, \geq \,  \iint \|\tilde{\theta}(\omega) - \theta\|^2 f(\theta;\omega) d\mu(\omega) dQ(\theta) \\ \label{eq:side10} & \hspace{0.2cm} \, + \, 2 \int \Big\langle \hat{\theta}(\omega) - \tilde{\theta}(\omega) , \tilde{\theta}(\omega) \int f(\theta;\omega) dQ(\theta)  - \int \theta f(\theta;\omega) dQ(\theta) \Big\rangle d\mu(\omega)\,,
  \end{align}
where $\langle\cdot,\cdot\rangle$ denotes the inner product associated with $\|\cdot\|$ and the integrals inside the inner product may be understood as Bochner integrals. Putting 
$$ \tilde{\theta}(\omega) :=  \int \theta f(\theta;\omega) dQ(\theta) \, / \,  \int f(\theta;\omega) dQ(\theta)\,, $$
the last term in (\ref{eq:side10}) vanishes so that $\tilde{\theta}$ is the Bayes estimator of $\theta$ with respect to $Q$ and $\|\cdot\|^2$. Thus the minimal Bayesian risk (Bayesian risk of $\tilde{\theta}$) equals
\begin{align*}
& E_Q \, E_\theta \, \|\tilde{\theta}-\theta\|^2 \\ & \, = \, \iint \Big\|\int \theta' f(\theta';\omega) dQ(\theta') \, / \,  \int f(\theta'';\omega) dQ(\theta'') - \theta\Big\|^2 f(\theta;\omega) d\mu(\omega) dQ(\theta) \\
& \, = \, \int \Big\|\int \theta' f(\theta';\omega) dQ(\theta')\Big\|^2 \, \Big\{\int f(\theta'';\omega) dQ(\theta'')\Big\}^{-2} \, \int f(\theta;\omega) dQ(\theta) d\mu(\omega) \\
& \, - \, 2 \int \Big\langle \int \theta' f(\theta';\omega) dQ(\theta') , \int \theta f(\theta;\omega) dQ(\theta)\Big\rangle \Big\{\int f(\theta'';\omega) dQ(\theta'')\Big\}^{-1} d\mu(\omega) \\
& \, + \, \int \|\theta\|^2 \underbrace{\int f(\theta;\omega)  d\mu(\omega)}_{=\, 1} dQ(\theta)\,,
\end{align*}
so that
\begin{align*}
E_Q  E_\theta  \|\tilde{\theta}-\theta\|^2 & \, = \, \int \|\theta\|^2 dQ(\theta) \\ & \quad - \, \int \Big\|\int \theta' f(\theta';\omega) dQ(\theta')\Big\|^2  \big/ \Big\{\int f(\theta;\omega) dQ(\theta)\Big\} d\mu(\omega). 
\end{align*}
This corresponds to the lower bound on the minimax risk which is applied in (\ref{eq:T2.10}).

\subsection{Proof of Theorem~\ref{P:0}}\label{sec:proof_main}

Since the measure $P_V$ of $V_1$ is known, we can identify the measure $P_Y$ from the Radon-Nikodym derivative $f_Y = dP_Y/dP_V$. Suppose there exist two measures $P_X$ and $\tilde{P}_X$, each of which is a candidate for the true measure of $X_1$, and both of which lead to the same measure $P_Y$ of $Y_1 = X_1 + V_1$. Consider the functional characteristic functions $\psi_X$, $\tilde{\psi}_X$ and $\psi_Y$, defined by
\begin{align*}
\psi_X(t) & \, = \, \int \exp\Big\{i \int_0^1 t(u) x(u) \,du\Big\}\, dP_X(x)\,, \\
\tilde{\psi}_X(t) & \, = \, \int \exp\Big\{i \int_0^1 t(u) x(u)\, du\Big\} \,d\tilde{P}_X(x)\,, \\
\psi_Y(t) & \, = \,  \int \exp\Big\{i \int_0^1 t(u) x(u)\, du\Big\}\, dP_Y(x)\,, \\ 
\psi_V(t) & \, = \,  \int \exp\Big\{i \int_0^1 t(u) x(u)\, du\Big\} \,dP_V(x) \\
& \, = \, \exp\Big\{-\frac12 \sigma^2 \int_0^1 \int_0^1 t(u) \min(u,u') t(u') \, du \, du'\Big\}\,, 
\end{align*}
for any $t \in L_2([0,1])$. It follows from the independence of $X_1$ and $V_1$ that
$ \psi_X(t) \cdot \psi_V(t)  =  \psi_Y(t)  =  \tilde{\psi}_X(t)\cdot \psi_V(t)$  for all $t\in L_2([0,1])$. 
 Since $\psi_V$ does not vanish anywhere, the above equality implies that $\psi_X=\tilde{\psi}_X$. Now, for $u\in [0,1]$, we put
$ t(u)  =  t_h(u) =h^{-1} \sum_{j=1}^{2^m-1} \tau_j\cdot K\big\{(u-j/2^m)/h\big\} $, 
where $m>0$ is integer, the $\tau_j$'s are real coefficients, and for a bandwidth parameter $h \in (0,2^{-m}]$ and a kernel function $K:\mathbb{R}\to\mathbb{R}$, which is non-negative, continuous, supported on the interval $[-1,1]$ and integrates to one. 

For any fixed $m$ and $\tau_j$, $j=1,\ldots,2^m-1$, we have
$ \lim_{h\to 0} \int_0^1 t_h(u) x(u) \,du \, = \, \sum_{j=1}^{2^m-1} \tau_j\cdot x(j/2^m)$, 
for any $x \in C_0([0,1])$. By dominated convergence it follows that $\psi_X(t_h) = \tilde{\psi}_X(t_h)$ tend to the characteristic functions of the random vector $$X_1^{[m]} = \big(X_1(1/2^m),\ldots,X_1((2^m-1)/2^m)\big)$$ at $\tau = (\tau_1,\ldots,\tau_{2^m-1})$ under the probability measure $P_X$ and $\tilde{P}_X$, respectively, as $h\downarrow 0$. Since $\tau$ can be chosen to be any vector in $\mathbb{R}^{2^m-1}$, the above mentioned characteristic functions are equal. It is well known that the characteristic function of any random vector in $\mathbb{R}^{2^m-1}$ determines its distribution uniquely so that the distributions of $X_1^{[m]}$ under the basic measure $P_X$, on the one hand, and $\tilde{P}_X$, on the other hand, are identical. Thus, for some arbitrary $s \in C_0([0,1])$, we have
\begin{align} \nonumber P_X\big(\big\{&x\in C_{0,0}([0,1])\, : \, x(j/2^m) \leq s(j/2^m),\, \forall j=1,\ldots,2^m-1\big\}\big) \\  \label{eq:Proof.0.0} & \, = \, \tilde{P}_X\big(\big\{x\in C_{0,0}([0,1])\, : \, x(j/2^m) \leq s(j/2^m),\, \forall j=1,\ldots,2^m-1\big\}\big)\,. \end{align}

The countable set ${\cal Q} = \bigcup_{m\in \mathbb{N}} \{k/2^m : k=1,\ldots,2^m-1\}$ is dense in the interval $[0,1]$. Hence the following events coincide:
\begin{align*}
\big\{x&\in C_{0,0}([0,1]) :  \, x(u) \leq s(u),\, \forall u\in [0,1]\big\} \\ &  =  \big\{x\in C_{0,0}([0,1]) : \, x(u) \leq s(u),\, \forall u\in {\cal Q}\big\} \\ & \, = \, \bigcap_{m\in \mathbb{N}} \big\{x\in C_{0,0}([0,1])\, : \, x(j/2^m) \leq s(j/2^m),\, \forall j=1,\ldots,2^m-1\big\}.
\end{align*}
Therefore we obtain that
\begin{align} \nonumber 
 P_X&\big(\big\{x\in C_{0,0}([0,1])\, : \, x(u) \leq s(u),\, \forall u\in [0,1]\big\}\big) \\ & \label{eq:Proof.0.1} \, = \, \lim_{m\to\infty}  P_X\big(\big\{x\in C_{0,0}([0,1])\, : \, x(j/2^m) \leq s(j/2^m),\, \forall j=1,\ldots,2^m-1\big\}\big)\,. \end{align}
The corresponding equality holds true for the measure $\tilde{P}_X$.

Combining (\ref{eq:Proof.0.0}) and (\ref{eq:Proof.0.1}) we deduce that
\begin{align*}
 P_X\big(\big\{x\in C_{0,0}([0,1])\,& : \, x(u) \leq\, s(u),\, \forall u\in [0,1]\big\}\big) \\ & \, = \, \tilde{P}_X\big(\big\{x\in C_{0,0}([0,1])\, : \, x(u) \leq s(u),\, \forall u\in [0,1]\big\}\big)\,, \end{align*}
for any $s \in C_0([0,1])$. The system of the sets 
$$ \big\{x\in C_{0,0}([0,1])\, : \, x(u) \leq s(u),\, \forall u\in [0,1]\big\},\qquad s \in C_0([0,1])\,, $$
is stable with respect to intersection and generates the Borel $\sigma$-field $\mathfrak{B}(C_{0,0}([0,1]))$. Therefore, by the uniqueness theorem for measures, we conclude that $P_X = \tilde{P}_X$.

\subsection{Proof of Proposition \ref{P:privacy}}
Let $x$ and $\tilde{x}$ be two realizations of the functional random variable $X$. Thanks to Assumptions 1 and 2, we may impose that $x(0)=\tilde{x}(0)=0$ and that $\max\{\|x'\|_2, \|\tilde{x}'\|_2\} \leq C_{X,1}$. For any $t_1,\ldots,t_n \in [0,1]$ we introduce the vector $F = \big(x(t_j) - \tilde{x}(t_j)\big)^T_{j=1,\ldots,n}$ and the matrix $M = \big\{E W(t_j) W(t_k)\big\}_{j,k =1,\ldots,n}$. According to Proposition~7 in Hall et al.~(2013), in order to prove privacy it suffices to show that
$\big|M^{-1/2} F\big| \, \leq \, \sigma \alpha / c(\beta)$, 
where we may put $c(\beta) = \sqrt{2 \log(2/\beta)}$ according to Proposition~3 in Hall et al.~(2013). Without any loss of generality we assume that $t_1 \leq \cdots \leq t_n$ since
$ \big|(P M P^T)^{-1/2} (PF)\big|^2 =  F^T M^{-1} F =  \big|M^{-1/2} F\big|^2$, 
for any $n\times n$-permutation matrix $P$. Then,
$$ M = \begin{pmatrix} t_1 & t_1 & t_1 & \cdots & t_1 \\
					   t_1 & t_2 & t_2 & \cdots & t_2 \\
					   t_1 & t_2 & t_3 & \cdots & t_3 \\
					   \vdots & \vdots & \vdots & \vdots & \vdots \\
					   t_1 & t_2 & t_3 & \cdots & t_n \end{pmatrix}\,. $$

Writing $\Delta_j = (F_j - F_{j-1})/(t_j - t_{j-1})$ if $t_j>t_{j-1}$ and $x'(t_j) - \tilde{x}'(t_j)$ if $t_j = t_{j-1}$; and $Y_j = \Delta_j - \Delta_{j+1}$, 
where we set $F_0 = t_0 = 0$ and $\Delta_{n+1} = 0$, we consider that
\begin{align*}
\sum_{l=1}^{k-1} t_l Y_l + \sum_{l=k}^n t_k Y_l & \, = \, \sum_{l=1}^{k-1} t_l (\Delta_l - \Delta_{l+1}) + t_k \Delta_k  \, = \, t_1 \Delta_1 + \sum_{l=2}^k (t_l + t_{l-1}) \Delta_l \, = \, F_k\,, \end{align*}
for all integer $k=1,\ldots,n$ so that $M Y = F$, where $Y = (Y_j)^T_{j=1,\ldots,n}$. We deduce that the left hand side of the above system of equations equals
\begin{align} \label{eq:priv2}
F^T M^{-1} F &  =  F^T Y  =  \sum_{j=1}^n F_j \Delta_j - \sum_{j=1}^n F_j \Delta_{j+1}  = \sum_{j=1}^n \frac{(F_j - F_{j-1})^2}{t_j - t_{j-1}}\,. \end{align}
As 
$ F_j - F_{j-1} \, = \, \int_{t_{j-1}}^{t_j} \big\{x'(t) - \tilde{x}'(t)\big\} dt$ for $j=1,\ldots,n$, 
the Cauchy-Schwarz inequality in $L_2([0,1])$ yields that (\ref{eq:priv2}) has the upper bound
$$ \sum_{j=1}^n \int_{t_{j-1}}^{t_j} \big|x'(t) - \tilde{x}'(t)\big|^2 dt \, \leq \, 4 C_{X,1}^2\,, $$
which completes the proof of the proposition. \hfill $\square$ \\

\noindent {\it Proof of Lemma~\ref{L:1}}: (a)\; Expanding $X_1'$ in the orthonormal basis $\{\varphi_j\}_j$ we get
$$ \int_0^1 X_1'(t) \,dV_1(t)  \, = \, \sum_{j=1}^\infty \langle X_1', \varphi_j \rangle \cdot \beta'_{V_1,j}\,, \ \ 
\|X_1'\|_2^2  \, = \, \sum_{j=1}^\infty \big|\langle X_1' , \varphi_j\rangle\big|^2\,, $$
where the infinite sums should be understood as mean squared limits. 

Since, for any integer $m$, $\mathfrak{A}_{m}$ is a subset of the $\sigma$-field generated by $V_1$, we have that
\begin{align} \nonumber
E&\big\{f_Y(V_1) \mid \mathfrak{A}_{m}\big\} \, = \, E\Big\{ \exp\Big(\frac1{\sigma^2} \int_0^1 X_1'(t)\, dV_1(t) -\frac1{2\sigma^2} \int_0^1 |X_1'(t)|^2\, dt\Big) \Big| \mathfrak{A}_m\Big\} \\ \nonumber
& \, = \, E\Big\{ \exp\Big(\frac1{\sigma^2} \sum_{j=1}^\infty \langle X_1',\varphi_j\rangle \cdot \beta'_{V_1,j}\Big) \cdot \exp\big(- \|X_1'\|_2^2 / (2 \sigma^2)\big) \Big| \mathfrak{A}_m\Big\} \\ \nonumber
& \, = \,  E\Big\{ \exp\Big(\frac1{\sigma^2} \sum_{j=1}^m \langle X_1',\varphi_j\rangle \cdot \beta'_{V_1,j}\Big)\cdot \exp\big(- \|X_1'\|_2^2 / (2\sigma^2)\big) \\ \nonumber
& \hspace{3.5cm} \cdot  E\Big\{ \exp\Big(\frac1{\sigma^2} \sum_{j>m} \langle X_1',\varphi_j\rangle \cdot \beta'_{V_1,j}\Big) \Big| \mathfrak{A}_m, X_1'\Big\} \Big| \mathfrak{A}_m\Big\}  \\  \nonumber
& \, = \,  E\Big\{ \exp\Big(\frac1{\sigma^2} \sum_{j=1}^m \langle X_1',\varphi_j\rangle \cdot \beta'_{V_1,j}\Big)\cdot \exp\big(- \|X_1'\|_2^2 / (2\sigma^2)\big) \\ \label{eq:L1.1} & \hspace{4cm} 
 \cdot \prod_{j>m} E\Big\{ \exp\Big(\frac1{\sigma^2} \langle X_1',\varphi_j\rangle \cdot \beta'_{V_1,j}\Big) \Big| X_1'\Big\} \Big| \mathfrak{A}_m\Big\}
\end{align}
holds true almost surely. 

Applying, to the last term in (\ref{eq:L1.1}), the fact that
\begin{equation} \label{eq:expGauss} 
E \big\{\exp\big(t \delta\big)\big\} \, = \, \exp\big(t^2/2\big)\,, \end{equation}
for all $\delta \sim N(0,1)$ and $t\in \mathbb{R}$, we deduce that
\begin{align*} 
E&\big\{f_Y(V_1) \mid \mathfrak{A}_{m} \big\} \, = \,  E\Big\{ \exp\Big(\frac1{\sigma^2} \sum_{j=1}^m \langle X_1',\varphi_j\rangle \cdot \beta'_{V_1,j}\Big)\cdot \exp\big(- \|X_1'\|_2^2 / (2\sigma^2)\big) \\ 
& \hspace{6.2cm} \cdot \exp\Big( \frac1{2\sigma^2}  \sum_{j>m} \big|\langle X_1',\varphi_j\rangle\big|^2\Big) \Big| \mathfrak{A}_m\Big\} \\
& \, = \,  E\Big\{ \exp\Big(\frac1{\sigma^2} \sum_{j=1}^m \langle X_1',\varphi_j\rangle \cdot \beta'_{V_1,j}\Big)\cdot \exp\Big(- \frac1{2\sigma^2}  \sum_{j=1}^m \big|\langle X_1',\varphi_j\rangle\big|^2\Big) \Big| \mathfrak{A}_m\Big\} \\
 & \, = \, \int \exp\Big(\frac1{\sigma^2} \sum_{j=1}^m \langle x',\varphi_j\rangle \cdot \beta'_{V_1,j} - \frac1{2\sigma^2}  \sum_{j=1}^m \big|\langle x',\varphi_j\rangle\big|^2\Big)\, dP_X(x) \\
& \, = \, f_Y^{[m]}(\beta'_{V_1,1},\ldots,\beta'_{V_1,m})
\end{align*}
almost surely, which completes the proof of part (a). \\
(b)\; Using the result of part (a) we have
\begin{align} \nonumber 
E & \big|f_Y^{[m]}(\beta'_{V_1,1},\ldots,\beta'_{V_1,m}) - f_Y(V_1)\big|^2 \\ \nonumber & \, = \, E \, \big[E\big\{\big|f_Y^{[m]}(\beta'_{V_1,1},\ldots,\beta'_{V_1,m}) - f_Y(V_1)\big|^2 \mid \mathfrak{A}_m\big\}\big] \\ \label{eq:L1.2}
& \, = \, E \,\big[ \mbox{var}\big\{f_Y(V_1) \mid \mathfrak{A}_m\big\}\big]\,.
\end{align}
Using Fubini's theorem, we get
\begin{align} \nonumber
E \, &\big[ \mbox{var}\big\{f_Y(V_1) \mid \mathfrak{A}_m\big\} \big]\\  
\nonumber & \, = \, E\, \Big\{ \mbox{var}\Big(\int \exp\Big(\frac1{\sigma^2} \sum_{j=1}^\infty \langle x',\varphi_j\rangle \cdot \beta'_{V_1,j}\Big)\cdot \exp\big(- \|x'\|_2^2 / (2\sigma^2)\big) \,dP_{X}(x) \Big| \mathfrak{A}_m\Big)\Big\} \\  \nonumber
& \, = \, \iint \exp\big(- \{\|x_1'\|_2^2 + \|x_2'\|_2^2\} / (2\sigma^2)\big) \, E\Big\{ \exp\Big(\frac1{\sigma^2} \sum_{j=1}^m \langle x_1' + x_2',\varphi_j\rangle \cdot \beta'_{V_1,j}\Big)\Big\} \\ \nonumber & \hspace{0.2cm} \cdot \mbox{cov}\Big\{ \exp\Big(\frac1{\sigma^2} \sum_{j>m} \langle x_1',\varphi_j\rangle \cdot \beta'_{V_1,j}\Big) ,  \exp\Big(\frac1{\sigma^2} \sum_{j>m} \langle x_2',\varphi_j\rangle \cdot \beta'_{V_1,j}\Big)\Big\}\, \\  \label{eq:L1.3} & \hspace{8cm} dP_{X}(x_1) \,dP_{X}(x_2)\,.
\end{align}
Using (\ref{eq:expGauss}) again we deduce that
$$ E \big\{\exp\big(\sigma^{-2} \sum_{j=1}^m \langle x_1' \allowbreak+ x_2',\varphi_j\rangle \cdot \beta'_{V_1,j}\big)\big\}  =  \exp\big\{ \sum_{j=1}^m \big|\langle x_1' + x_2',\varphi_j\rangle\big|^2/(2 \sigma^2)\big\}\,, $$
and that
\begin{align*}
&\mbox{cov}\Big\{ \exp\Big(\frac1{\sigma^2} \sum_{j>m} \langle x_1',\varphi_j\rangle \cdot \beta'_{V_1,j}\Big) ,  \exp\Big(\frac1{\sigma^2} \sum_{j>m} \langle x_2',\varphi_j\rangle \cdot \beta'_{V_1,j}\Big)\Big\} \\
& \, = \, E \Big\{ \exp\Big(\frac1{\sigma^2} \sum_{j>m} \langle x_1'+x_2',\varphi_j\rangle \cdot \beta'_{V_1,j}\Big) \Big\}\\ & 
\, - \,  \Big[E\Big\{ \exp\Big(\frac1{\sigma^2} \sum_{j>m} \langle x_1',\varphi_j\rangle \cdot \beta'_{V_1,j}\Big)\Big\}\Big]
\cdot \Big[E\Big\{ \exp\Big(\frac1{\sigma^2} \sum_{j>m} \langle x_2',\varphi_j\rangle \cdot \beta'_{V_1,j}\Big)\Big\}\Big] \\
& \, =  \exp\Big(\frac1{2\sigma^2} \sum_{j>m} \big|\langle x_1'+x_2',\varphi_j\rangle\big|^2\Big)  -  \exp\Big(\frac1{2\sigma^2} \sum_{j>m} \big\{\big|\langle x_1',\varphi_j\rangle\big|^2 + \big|\langle x_2',\varphi_j\rangle\big|^2\big\}\Big)\,.
\end{align*}
Plugging these equalities into (\ref{eq:L1.3}) we conclude that
\begin{align} \nonumber
& E \,\big[ \mbox{var}\big\{f_Y(V_1) \mid \mathfrak{A}_m\big\} \big]  
 = \iint \Big[\exp\big\{- (\|x_1'\|_2^2 + \|x_2'\|_2^2 - \|x_1' + x_2'\|_2^2) / (2\sigma^2)\big\} \\  \nonumber 
&  - \exp\Big\{- \frac1{2\sigma^2} \sum_{j=1}^m \big(\big|\langle x_1',\varphi_j\rangle\big|^2 + \big|\langle x_2',\varphi_j\rangle\big|^2 - \big|\langle x_1'+x_2',\varphi_j\rangle\big|^2\big)\Big\}\Big]  dP_{X}(x_1) dP_{X}(x_2) \\ \label{eq:L1.4} 
& = \iint \Big\{\exp\big(\langle x_1',x_2'\rangle / \sigma^2\big) - \exp\Big( \frac1{\sigma^2} \sum_{j=1}^m \langle x_1',\varphi_j\rangle  \langle x_2',\varphi_j\rangle\Big)\Big\} \,dP_{X}(x_1) \,dP_{X}(x_2)\,.
\end{align}

Let $X_2$ denote an independent copy of $X_1$. Then  (\ref{eq:L1.4}) satisfies
\begin{align*}
E \Big\{\exp\big(\langle & X_1',X_2'\rangle / \sigma^2\big) - \exp\Big( \frac1{\sigma^2} \sum_{j=1}^m \langle X_1',\varphi_j\rangle  \langle X_2',\varphi_j\rangle\Big)\Big\} \\
& \, \leq \, \frac1{\sigma^2} E \Big| \sum_{j>m} \langle X_1' , \varphi_j \rangle \langle X_2',\varphi_j\rangle \Big|\cdot \exp\big(C_{X,1}^2 / \sigma^2\big) \\
& \, \leq \, \frac1{\sigma^2} \cdot \exp\big(C_{X,1}^2 / \sigma^2\big) \cdot \Big(\sum_{j,j'>m} \big|\langle \varphi_j , \Gamma_X \varphi_{j'} \rangle\big|^2\Big)^{1/2}\,, 
\end{align*}
where we used the mean value theorem and the Cauchy-Schwarz inequality.

\subsection{Proof of Theorem~\ref{P:1}}
Let $V \sim P_V$ denote a functional random variable which is independent of $X_1,\ldots,X_n$ and $W_1,\ldots,W_n$,  and let
$ \beta'_{V,j} \, = \, \int_0^1 \varphi_j(t)\, dV(t)$. 
Since $\hat{f}_Y^{[m,K]}(V)$ is measurable in the $\sigma$-field generated by ${\beta'_{V,1}},\ldots,{\beta'_{V,m}},Y_1,\ldots,Y_n$ and as
\begin{align*} f_Y^{[m]}({\beta'_{V,1}},\ldots,{\beta'_{V,m}}) & =  E\big\{f_Y(V) \mid {\beta'_{V,1}},\ldots,{\beta'_{V,m}}\big\} \\ & =  E\big\{f_Y(V) \mid {\beta'_{V,1}},\ldots,{\beta'_{V,m}},Y_1,\ldots,Y_n\big\}\,, \end{align*}
a.s., by Lemma~\ref{L:1}(a), we have
\begin{align}
{\cal R}&\big(\hat{f}_Y^{[m,K]},f_Y\big)  =  E \big|\hat{f}_Y^{[m,K]}(V) - f_Y(V)\big|^2\notag\\
& =  E \big[ E\big\{\big|\hat{f}_Y^{[m,K]}(V) - f_Y(V)\big|^2 \mid {\beta'_{V,1}},\ldots,{\beta'_{V,m}},Y_1,\ldots,Y_n\big\}\big] \nonumber \\  \nonumber
&  =   E \, \Big[\mbox{var}\big\{f_Y(V) \mid {\beta'_{V,1}},\ldots,{\beta'_{V,m}},Y_1,\ldots,Y_n\big\}\Big] \\ \nonumber & \hspace{5cm} + \, E \,\big|\hat{f}_Y^{[m,K]}(V) - f_Y^{[m]}({\beta'_{V,1}},\ldots,{\beta'_{V,m}})\big|^2 \\  \nonumber
&  =   E \, \Big[\mbox{var}\big\{f_Y(V) \mid {\beta'_{V,1}},\ldots,{\beta'_{V,m}}\big\}\Big] \, + \, E \, \big|\hat{f}_Y^{[m,K]}(V) - f_Y^{[m]}({\beta'_{V,1}},\ldots,{\beta'_{V,m}})\big|^2 \\  \label{eq:P1.1} & \leq {\cal D} \, + \, E \big|\hat{f}_Y^{[m,K]}(V) - f_Y^{[m]}({\beta'_{V,1}},\ldots,{\beta'_{V,m}})\big|^2\,,
\end{align}
using also Lemma~\ref{L:1}(b). Using the definition (\ref{eq:est}) of the estimator $\hat{f}_Y^{[m,K]}$ and Parseval's identity with respect to the orthonormal basis of the $H_{k_1,\ldots,k_m}$ in $L_{2,g_1}(\mathbb{R}^m)$, we get

\begin{align} \nonumber
& E \big|\hat{f}_Y^{[m,K]}(V) - f_Y^{[m]}({\beta'_{V,1}},\ldots,{\beta'_{V,m}})\big|^2  =  E \Big\|\sum_{k_1,\ldots,k_m\geq 0} 1\{k_1+\cdots+k_m\leq K\} \\
\notag&\hspace{1.7cm} \cdot\, \frac1n \sum_{j=1}^n H_{k_1,\ldots,k_m}(\beta'_{Y_j,1}/\sigma,\ldots,\beta'_{Y_j,m}/\sigma) \cdot H_{k_1,\ldots,k_m}  -  f_Y^{[m]}(\sigma\cdot)\Big\|_{g_1}^2 \\ \nonumber
& \, = \, \sum_{k_1,\ldots,k_m\geq 0} 1\{k_1+\cdots+k_m\leq K\} \\
  \label{eq:MISE1} & \hspace{0.7cm} \cdot E \Big|\frac1n \sum_{j=1}^n H_{k_1,\ldots,k_m}(\beta'_{Y_j,1}/\sigma,\ldots,\beta'_{Y_j,m}/\sigma) - \big\langle f_Y^{[m]}(\sigma\cdot) , H_{k_1,\ldots,k_m} \big\rangle_{g_1}\Big|^2 \, + \, {\cal B}\,.
\end{align}
Since, from \eqref{eq:proj},
$$ E \Big\{\frac1n \sum_{j=1}^n H_{k_1,\ldots,k_m}(\beta'_{Y_j,1}/\sigma,\ldots,\beta'_{Y_j,m}/\sigma)\Big\} \, = \, \big\langle f_Y^{[m]}(\sigma\cdot) , H_{k_1,\ldots,k_m} \big\rangle_{g_1}\,, $$
it follows that
\begin{align*}
E& \Big|\frac1n \sum_{j=1}^n H_{k_1,\ldots,k_m}(\beta'_{Y_j,1}/\sigma,\ldots,\beta'_{Y_j,m}/\sigma) - \big\langle f_Y^{[m]}(\sigma\cdot) , H_{k_1,\ldots,k_m} \big\rangle_{g_1}\Big|^2 \\
& \, = \, \mbox{var}\Big(\frac1n \sum_{j=1}^n H_{k_1,\ldots,k_m}(\beta'_{Y_j,1}/\sigma,\ldots,\beta'_{Y_j,m}/\sigma)\Big) \\ & \, \leq \, \frac1n\cdot E H_{k_1,\ldots,k_m}^2(\beta'_{Y_1,1}/\sigma,\ldots,\beta'_{Y_1,m}/\sigma)\,.
\end{align*}

Using the fact that the Hermite polynomials form an Appell sequence (see e.g.~Appell,~1880) we deduce that
\begin{align} \nonumber
E &\big\{\, H_{k_1,\ldots,k_m}^2(\beta'_{Y_1,1}/\sigma,\ldots,\beta'_{Y_1,m}/\sigma) \big\}\, = \, E \Big[\prod_{l=1}^m E \big\{ H_{k_l}^2(\beta'_{X_1,l}/\sigma+\beta'_{V,1,l}/\sigma) \mid X_1'\big\} \Big]\\ \nonumber
&  =  E\Big[ \prod_{l=1}^m \frac1{k_l!} E\Big\{\Big(\sum_{j=0}^{k_l} \sqrt{j!} {k_l \choose j} \sigma^{j-k_l} {\beta'_{X_1,l}}^{k_l-j} H_j(\beta'_{V,1,l}/\sigma)\Big)^2\mid X_1'\Big\} \Big]\\ \nonumber
&  =  E \Big[\prod_{l=1}^m \Big\{\frac1{k_l!} \sum_{j,j'=0}^{k_l} \sqrt{j!j'!} {k_l \choose j} {k_l \choose j'} \sigma^{j+j'-2k_l} {\beta'_{X_1,l}}^{2k_l-j-j'} \\ \nonumber & \hspace{7cm} \cdot \int H_j(t) H_{j'}(t) \frac1{\sqrt{2\pi}} e^{-\frac{t^2}{2}} \,dt\Big\}\Big] \\ \nonumber
&  =  E \Big[\prod_{l=1}^m \Big\{\frac1{k_l!} \sum_{j=0}^{k_l} j! {k_l \choose j}^2 \sigma^{2(j-k_l)} {\beta'_{X_1,l}}^{2(k_l-j)} \Big\} \Big] \\ \nonumber & \, = \, E \Big[\prod_{l=1}^m \Big\{\sum_{j=0}^{k_l} \frac1{j!} {k_l \choose j} \sigma^{-2j} {\beta'_{X_1,l}}^{2j} \Big\}\Big] \\ \label{eq:H2}
&  \leq  E \Big[\prod_{l=1}^m \big\{1 + (\beta'_{X_1,l}/\sigma)^2\big\}^{k_l}\Big] \, \leq \, \exp\big(K C_{X,1}^2/\sigma^2\big)\,,  
\end{align}
using the orthonormality of the $H_{k_1,\ldots,k_m}$ with respect to $\langle\cdot,\cdot\rangle_{g_1}$. Using elementary arguments from combinatorics, we also have
$$ \# \big\{(k_1,\ldots,k_m) \in \mathbb{N}_0 \, : \, k_1+\cdots+k_m \leq K\big\} \, = \, {K+m \choose K}\,. $$
Combined with \eqref{eq:H2}, this implies that the first term in (\ref{eq:MISE1}) is bounded from above by ${\cal V}$. Combining this with the other derivations above completes the proof of the theorem.

\subsection{Proof of Theorem~\ref{T:1}}

The next lemma gives an upper bound on the term ${\cal B}$ defined in Theorem~\ref{P:1}. It will be used to prove the theorem.
\begin{lem} \label{L:smooth}
Under Assumptions 1 and 2, the term ${\cal B}$ in Theorem~\ref{P:1} satisfies
$ {\cal B} \, =\, {\cal O}\big\{ (C_{X,1}/\sigma)^{2K} (2C_{X,1}/\sigma + \sqrt{2})^{2K} / (K+1)!\big\} $, 
where the constants contained in ${\cal O}(\cdots)$ only depend on $C_{X,1}$ and $\sigma$.
\end{lem}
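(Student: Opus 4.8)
The plan is to recognise $\mathcal B$ as the squared norm of the orthogonal projection of $f_Y^{[m]}(\sigma\cdot)$ onto the orthogonal complement of $\mathcal H_{m,K}$ in $L_{2,g_1}(\mathbb R^m)$. Since the $H_{k_1,\ldots,k_m}$ form an orthonormal basis of $L_{2,g_1}(\mathbb R^m)$ and $\mathcal H_{m,K}$ is the span of those with $k_1+\cdots+k_m\le K$, the infimum is attained by the truncated Fourier--Hermite series, so that
$$\mathcal B \,=\, \sum_{k_1+\cdots+k_m>K}\big|\big\langle f_Y^{[m]}(\sigma\cdot),H_{k_1,\ldots,k_m}\big\rangle_{g_1}\big|^2\,.$$
The first concrete task is therefore to identify the coefficients $a_{k_1,\ldots,k_m}:=\langle f_Y^{[m]}(\sigma\cdot),H_{k_1,\ldots,k_m}\rangle_{g_1}$. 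To this end I would substitute $u_j=\beta'_{X,j}/\sigma$ into \eqref{fYm}, giving $f_Y^{[m]}(\sigma s)=E\prod_{j=1}^m\exp(u_j s_j-u_j^2/2)$, and use the generating identity $\exp(us-u^2/2)=\sum_{k\ge0}u^k H_k(s)/\sqrt{k!}$ for the scaled Hermite polynomials. Expanding each factor and interchanging the expectation over $X$ with the $L_{2,g_1}$-integration (legitimate by Fubini, since $\sum_j u_j^2=\|X'\|_2^2\le C_{X,1}^2$ a.s.\ by Assumption 2 and each $H_{k_1,\ldots,k_m}\in L_{2,g_1}$) yields the closed form
$$a_{k_1,\ldots,k_m}\,=\,E\prod_{j=1}^m\frac{(\beta'_{X,j}/\sigma)^{k_j}}{\sqrt{k_j!}}\,.$$

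Next I would control the tail degree by degree. Writing $T_N=\sum_{k_1+\cdots+k_m=N}|a_{k_1,\ldots,k_m}|^2$ and introducing an independent copy $\tilde X$ of $X$, independence gives $|a_{k_1,\ldots,k_m}|^2=E\big[\prod_j(\beta'_{X,j}\beta'_{\tilde X,j}/\sigma^2)^{k_j}/k_j!\big]$. Summing over all compositions of $N$ and collapsing the inner sum by the multinomial theorem produces the exact identity
$$T_N\,=\,\frac{1}{N!\,\sigma^{2N}}\,E\Big[\Big(\sum_{j=1}^m\beta'_{X,j}\beta'_{\tilde X,j}\Big)^{N}\Big]\,.$$
Crucially this collapse removes all dependence on $m$. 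Since $\big|\sum_{j=1}^m\beta'_{X,j}\beta'_{\tilde X,j}\big|\le\|X'\|_2\,\|\tilde X'\|_2\le C_{X,1}^2$ a.s.\ by the Cauchy--Schwarz inequality (Bessel) and Assumption 2, I obtain $T_N\le (C_{X,1}/\sigma)^{2N}/N!$, whence $\mathcal B=\sum_{N>K}T_N$ is bounded by the tail of an exponential series. For $K$ large enough that $K+2\ge 2(C_{X,1}/\sigma)^2$, this tail is at most $2(C_{X,1}/\sigma)^{2(K+1)}/(K+1)!$, so $\mathcal B=\mathcal O\big((C_{X,1}/\sigma)^{2(K+1)}/(K+1)!\big)$. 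As $(2C_{X,1}/\sigma+\sqrt2)^{2K}\ge 2^{K}$, this is a fortiori $\mathcal O$ of the stated bound; the extra factor $(2C_{X,1}/\sigma+\sqrt2)^{2K}$ in the statement is only what one needs to absorb the cruder per-term estimates one would get without the multinomial collapse.

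The step I expect to be most delicate is making the identification of the coefficients $a_{k_1,\ldots,k_m}$ fully rigorous, namely justifying the interchange of the expectation over $X$ with the $L_{2,g_1}$-integration and confirming that $f_Y^{[m]}(\sigma\cdot)\in L_{2,g_1}(\mathbb R^m)$ in the first place. Both follow from the $t=1$ instance of the weighted series above, since $\sum_{k_1,\ldots,k_m}|a_{k_1,\ldots,k_m}|^2=\sum_N T_N=E\exp\big(\sigma^{-2}\sum_{j\le m}\beta'_{X,j}\beta'_{\tilde X,j}\big)\le\exp(C_{X,1}^2/\sigma^2)<\infty$, which simultaneously gives finiteness, square-integrability, and the consistency needed for Fubini. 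Once the coefficient formula and the a.s.\ bound on the truncated inner product are in place, the combinatorial collapse and the exponential-tail estimate are entirely routine.
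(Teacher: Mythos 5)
Your argument is correct, and it takes a genuinely different route from the paper's. The paper writes $f_Y^{[m]} = T_{m,K} + R_{m,K}$ via a Taylor expansion of the exponential to order $K$, observes that $T_{m,K}(\sigma\cdot)$ is a polynomial of degree at most $K$ and hence lies in ${\cal H}_{m,K}$, and then bounds ${\cal B} \leq \|R_{m,K}(\sigma\cdot)\|_{g_1}^2$ by controlling the Lagrange remainder with Gaussian moment computations (conditioning on $X_1'$, using $E\,\delta^{2(K+1)} = \Gamma(K+3/2)2^{K+1}/\sqrt{\pi}$ and Minkowski's inequality); the factor $(2C_{X,1}/\sigma+\sqrt{2})^{2K}$ in the statement is an artefact of that remainder estimate. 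You instead use Parseval to identify ${\cal B}$ exactly as the tail $\sum_{k_1+\cdots+k_m>K}|a_{k_1,\ldots,k_m}|^2$ of the Fourier--Hermite expansion, compute the coefficients in closed form from the generating identity $\exp(us-u^2/2)=\sum_k u^k H_k(s)/\sqrt{k!}$, and collapse each degree-$N$ shell by the multinomial theorem with an independent copy $\tilde{X}$, giving $T_N = (N!\,\sigma^{2N})^{-1}E\{(\sum_{j\leq m}\beta'_{X,j}\beta'_{\tilde{X},j})^N\} \leq (C_{X,1}/\sigma)^{2N}/N!$ via Cauchy--Schwarz and Bessel. All steps check out (the interchange issues you flag are handled exactly as you indicate, and one can also verify $f_Y^{[m]}(\sigma\cdot)\in L_{2,g_1}$ directly via Minkowski's integral inequality, which the paper asserts after \eqref{eq:proj}). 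Your route buys an exact, $m$-free expression for ${\cal B}$ and the strictly sharper bound ${\cal B} \leq e^{C_{X,1}^2/\sigma^2}(C_{X,1}/\sigma)^{2(K+1)}/(K+1)!$, valid for every $K$, which a fortiori implies the lemma since $(2C_{X,1}/\sigma+\sqrt{2})^{2K}\geq 1$; it would feed into the proof of Theorem~\ref{T:1} unchanged, as $\log{\cal B}\leq \mathrm{const}\cdot K - K\log K$ still holds. The paper's Taylor/Lagrange route is cruder term by term but does not require identifying the basis coefficients explicitly.
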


\noindent {\it Proof of Lemma~\ref{L:smooth}}: By Taylor expansion we can write $f_Y^{[m]} = T_{m,K} + R_{m,K}$ where $R_{m,K}$ is a remainder term that will be treated below, and 
\begin{align*}
& T_{m,K}(s_1,\ldots,s_m)  \, = \, E \Big\{\sum_{k=0}^K \frac{1}{k!} \sigma^{-2k} \Big(\sum_{j=1}^m \beta'_{X_1,j} \cdot s_j\Big)^k \exp\Big(-\frac1{2\sigma^2} \sum_{j=1}^m {\beta'_{X_1,j}}^2\Big)\Big\} \\
& \quad = \, \sum_{k=0}^K \frac1{k!} \sigma^{-2k} \sum_{j_1,\ldots,j_k=1}^m \ \Big(\prod_{l=1}^k s_{j_l}\Big) \cdot E \Big\{\Big(\prod_{l=1}^k \beta'_{X_1,j_l}\Big) \exp\Big(-\frac1{2\sigma^2} \sum_{j=1}^m {\beta'_{X_1,j}}^2\Big)\Big\}\,.
\end{align*}
(Assumption 2 guarantees integrability of the above terms). 
Now $T_{m,K}(\sigma\cdot)$ is an $m$-variate polynomial of degree $\leq K$, so that $T_{m,K}(\sigma\cdot)$ is contained in the linear subspace ${\cal H}_{m,K}$ of $L_{2,g_1}(\mathbb{R}^m)$. It follows from there that
\begin{equation} \label{eq:upplow} {\cal B} \, \leq \, \big\| R_{m,K}(\sigma\cdot) \big\|_{g_1}^2\,. \end{equation}

Next, using the Lagrange representation, the remainder term $R_{m,K}$  has the following upper bound:
\begin{align*}
\big|&R_{m,K}(s_1,\ldots,s_m)\big|  \, \leq \, \frac1{(K+1)!} E\Big[\Big|\frac1{\sigma^2} \sum_{j=1}^m \beta'_{X_1,j} \cdot s_j\Big|^{K+1} \\ & \hspace{2.5cm} \cdot \max\Big\{\exp\Big(-\frac1{\sigma^2} \sum_{j=1}^m \beta'_{X_1,j} \cdot s_j\Big),1\Big\}\exp\Big(-\frac1{2\sigma^2} \sum_{j=1}^m {\beta'_{X_1,j}}^2\Big)\Big]\,,
\end{align*}
so that, by Jensen's inequality,
\begin{align} \nonumber
\big\| R_{m,K}(\sigma\cdot)& \big\|_{g_1}^2 \leq  \frac1{[(K+1)!]^2} E\Big[\Big|\frac1{\sigma^2} \sum_{j=1}^m \beta'_{X_1,j} \cdot \beta'_{V,j}\Big|^{2(K+1)} \\ \label{eq:RmK} & \hspace{0.4cm} \cdot \max\Big\{\exp\Big(-\frac2{\sigma^2} \sum_{j=1}^m \beta'_{X_1,j} \cdot \beta'_{V,j}\Big),1\Big\}\exp\Big(-\frac1{\sigma^2} \sum_{j=1}^m {\beta'_{X_1,j}}^2\Big)\Big]\,.
\end{align}

Conditionally on $X_1'$, the random variable $\sum_{j=1}^m \beta'_{X_1,j} \cdot \beta'_{V,j} / \sigma^2$ is normally distributed with mean $0$ and variance $\kappa_m^2 = \sum_{j=1}^m {\beta'_{X_1,j}}^2 / \sigma^2$. Thus, the right hand side of \eqref{eq:RmK} can be expressed as
$$ \frac1{\{(K+1)!\}^2} E \Big(\kappa_m^{2(K+1)} \exp\big(-\kappa_m^2\big) \, E  \big[\delta^{2(K+1)} \cdot \max\big\{\exp\big(2\kappa_m \delta\big),1\big\} \mid X_1'\big]\Big)\,, $$
where $\delta \sim N(0,1)$ and $X_1'$ are independent. Thus \eqref{eq:RmK} has the following upper bound:
\begin{align} \nonumber
 & \frac1{\{(K+1)!\}^2}  E \Big\{\kappa_m^{2(K+1)} \exp\big(-\kappa_m^2\big) \, E  \delta^{2(K+1)} \Big\}\\ \nonumber 
 & \hspace{1.5cm} + \, \frac1{\{(K+1)!\}^2} E\Big[ \kappa_m^{2(K+1)} \exp\big(-\kappa_m^2\big) \, E \big\{ \delta^{2(K+1)} \exp\big(2\kappa_m \delta\big)\mid X_1'\big\}\Big] \\  \nonumber
& \, = \, \frac1{\{(K+1)!\}^2} E \Big\{ \kappa_m^{2(K+1)} \exp\big(-\kappa_m^2\big)\Big\} \, 2^{K+1} \Gamma(K+3/2) / \sqrt{\pi} \\ \nonumber 
& \quad + \frac1{\{(K+1)!\}^2} E \Big\{\kappa_m^{2(K+1)} \exp\big(-\kappa_m^2\big)  \int s^{2(K+1)} \exp\big(2\kappa_m s - s^2/2\big) \,ds \Big\}/ \sqrt{2\pi}  \\ \nonumber
 & \, \leq \, {\cal O}\big\{ (2C_{X,1}/\sigma)^{2K} / (K+1)!\big\} \\ \nonumber 
& \quad + \frac1{\{(K+1)!\}^2} E\Big\{ \kappa_m^{2(K+1)} \exp\big(\kappa_m^2\big)  \int (s+2\kappa_m)^{2(K+1)} \exp\big(- s^2/2\big) \,ds \Big\}/ \sqrt{2\pi} \\ 
\nonumber & \, \leq \, {\cal O}\big\{ (2C_{X,1}/\sigma)^{2K} / (K+1)!\big\} \\ \nonumber 
& + \frac1{\{(K+1)!\}^2} E\Big\{ \kappa_m^{2(K+1)} \exp\big(\kappa_m^2\big) \big(\sqrt{2} + 2\kappa_m\big)^{2(K+1)} \Big\}\max\Big\{1,\Gamma(K+3/2)/ \sqrt{\pi}\Big\} 
 \\ \nonumber & \, =\, {\cal O}\big\{ (C_{X,1}/\sigma)^{2K} (2C_{X,1}/\sigma + \sqrt{2})^{2K} / (K+1)!\big\}\,,
\end{align} 
where we have used Assumption 2, which guarantees that $\kappa_m \leq C_{X,1}/\sigma$; the fact that $E\delta^{2(K+1)}=\Gamma(K+3/2) 2^{K+1}/\sqrt\pi$ and Minkowski's inequality. \hfill $\square$ \\

\begin{proof}[Proof of Theorem~\ref{T:1}]
Since Assumption 2 holds, we can apply Theorem~\ref{P:1}. First we consider the variance term ${\cal V}$. Using Stirling's approximation, we have
\begin{align} \nonumber & 
\exp\big(K C_{X,1}^2/\sigma^2\big) {K+m \choose K} \\ \nonumber & \, \asymp \, \frac{1}{\sqrt{2\pi}}\, \exp\big(K C_{X,1}^2 /\sigma^2\big)\, \sqrt{\frac1m + \frac1K} \cdot \big(1 + m/K\big)^K \cdot \big(1 + K/m\big)^m \\ \nonumber
& \, \leq \, \frac1{\sqrt{2\pi}} \cdot \exp\big\{K(1 + C_{X,1}^2/\sigma^2 + \log 2)\big\} \cdot (m/K)^K\,, \end{align}
since $m \geq K$ for $n$ sufficiently large. We deduce that
$$ \limsup_{n\to\infty} \sup_{P_X \in {\cal F}_X} (\log {\cal V}) / \log n \, = \, \gamma (1/\gamma-1) - 1 \, = \, -\gamma\,. $$

Using Lemma~\ref{L:smooth}, an upper bound for  $\log {\cal B}$ is given by $\mbox{const.}\cdot K - K \log K$, where the constant is uniform over all $P_X \in {\cal F}_X$, so that 
$$ \limsup_{n\to\infty} \sup_{P_X \in {\cal F}_X} (\log {\cal B}) / \log n \, = \, - \gamma\,. $$ 

Finally, under Assumption 3,  ${\cal D}={\cal O}\big(n^{-C_{X,3}C_M/2}\big)$ uniformly over all $P_X \in {\cal F}_X$. The assumption $C_M > 2/C_{X,3}$ guarantees that  ${\cal D}$ is asymptotically negligible.
\end{proof}

\subsection{Proof of Theorem~\ref{T:2}}
We define
$$ f_{\cal K}(x)  =  K^{K/2} (m-K)^{(m-K)/2} \Big\{\prod_{k \in {\cal K}} f\big(\sqrt{K} x_k\big)\Big\} \cdot \Big\{\prod_{k\in \{1,\ldots,m\}\backslash {\cal K}} \big|f\big(\sqrt{m-K} x_k\big)\big|\Big\}\,, $$
for all $x\in \mathbb{R}^m$, any integers $m > K > 0$, any subset ${\cal K} \subseteq \{1,\ldots,m\}$ with $\# {\cal K} = K$ and $f = 1_{(0,1/2]} - 1_{[-1/2,0)}$. Then we introduce the functions
$$ f_\theta(x) \, = \, {m \choose K}^{-1} \sum_{{\cal K}} \big|f_{\cal K}(x)\big| \, + \, {m \choose K}^{-1} \sum_{{\cal K}} \theta_{{\cal K}} f_{\cal K}(x)\,, $$
for any vector $\theta = \{\theta_{\cal K}\}_{\cal K}$ with $\theta_{\cal K} \in \{-1,1\}$. All $f_\theta$'s are $m$-variate Lebesgue probability densities. Then we define the probability measure $\tilde{P}_\theta$ on $\mathfrak{B}(\mathbb{R}^m)$ by 
$$ \tilde{P}_\theta(B) \, = \, (1-\eta) \cdot 1_B(0) + \eta \int_B f_\theta(x) dx\,, \qquad B\in \mathfrak{B}(\mathbb{R}^m)\,, $$
for some $\eta \in (0,1)$ still to be selected. Now let $\tilde{X}=(\tilde{X}_1,\ldots,\tilde{X}_m)$ be some $m$-dimensional random vector with the measure $\tilde{P}_\theta$. Then $P_{X,\theta}$ denotes the image measure of the functional random variable $X_1$ on $\mathfrak{B}(C_{0,0}([0,1]))$ with
\begin{equation} \label{eq:T2.1} X_1(t) \, = \, \sum_{j=1}^m \tilde{X}_j \cdot \int_0^t \varphi_j(s) ds\,, \qquad t\in [0,1]\,.  \end{equation}

Now we show that $P_{X,\theta} \in {\cal F}_X$ for all vectors $\theta$. As the $\varphi_j$'s are continuously differentiable, Assumption 1 holds true. Moreover, the support of each $f_{\cal K}$ is included in the $m$-dimensional ball around zero with the radius $1$. Therefore the measure $\tilde{P}_\theta$ is also supported on a subset of this ball so that
$ \|X_1'\|_2^2 \, = \, |\tilde{X}|^2 \, \leq \, 1\,, \qquad \mbox{a.s.}$. 
Hence Assumption 2 is satisfied. We have that 
$$ \int_0^1 \varphi_j(s) \big(\Gamma_X \varphi_{j'}\big)(s) \,ds \, = \, 1\big\{\max\{j,j'\} \leq m\big\} \cdot E \tilde{X}_j \tilde{X}_{j'} \, = \, 1\{j=j' \leq m\} \cdot \frac\eta{6m}\,, $$
for all $K\geq 3$ where we have used the fact that $f$ is an odd function. Putting
\begin{equation} \label{eq:T2.100} \eta \, = \, 6 \sqrt{m} \sqrt{C_{X,2}} \cdot \exp\big( - C_{X,3} m^\gamma /2\big)\,, \end{equation} 
for $m$ sufficiently large, Assumption 3 is satisfied as well.

Following a usual strategy for the proof of lower bounds, we bound the supremum of the statistical risk from below by the Bayesian risk where the a priori distribution of $\theta$ is such that all $\theta_{\cal K}$'s are i.i.d.~$\{-1,1\}$-valued random variables with $P(\theta_{\cal K} = 1) = 1/2$. Applying the standard formula for the minimal Bayesian risk we deduce that
\begin{align}
\sup_{P_X \in {\cal F}_X} {\cal R}\big(\hat{f}_n , f_Y\big)   \geq & E_\theta \int \big|f_{Y,\theta}(v)\big|^2 dP_V(v)  \notag\\
& -  \iint \big|E_\theta f_{Y,\theta}(u) f_{Y,\theta}^{(n)}(v)\big|^2 dP_V(u) / E_\theta f_{Y,\theta}^{(n)}(v) dP_V^{(n)}(v)\,, \label{eq:T2.10}
\end{align}
where $f_{Y,\theta}$ denotes the density of $Y_1$ with respect to $P_V$ when $X_1 \sim P_{X,\theta}$, and $P_V^{(n)}$ and $f_{Y,\theta}^{(n)}$ denote the $n$-fold product measure and product density of $P_{V}$ and $f_{Y,\theta}$, respectively. Note that $P_{Y,\theta}^{(n)}$ is the measure of the observed data. For details on the proof of (\ref{eq:T2.10}), see Section \ref{sec:side}. 

By Lemma~\ref{L:1} and Equation (\ref{eq:proj}), the $L_2(P_V)$-inner product of $f_{Y,\theta'}$ and $f_{Y,\theta''}$ equals 
\begin{align} \nonumber
\int f_{Y,\theta'}(v) &f_{Y,\theta''}(v) dP_V(v) \, = \, E f_{Y,\theta'}^{[m]}(\beta'_{V_1,1},\ldots,\beta'_{V_1,m}) f_{Y,\theta''}^{[m]}(\beta'_{V_1,1},\ldots,\beta'_{V_1,m}) \\  \nonumber
& \, = \, \int \Big\{\int g_\sigma(s - x) \, d\tilde{P}_{\theta'}(x)\Big\}\, \Big\{\int g_\sigma(s - x') d\tilde{P}_{\theta''}(x')\Big\} / g_\sigma(s) \, ds \\  \nonumber & \, = \, \iint \Big\{\int g_\sigma(s - x)\, g_\sigma(s - x') / g_\sigma(s) \, ds \Big\} \, d\tilde{P}_{\theta'}(x) \, d\tilde{P}_{\theta''}(x') \\ \nonumber
& \, = \, \iint \exp\big( x^\dagger x' / \sigma^2\big)\, d\tilde{P}_{\theta'}(x) \, d\tilde{P}_{\theta''}(x') \\ \label{eq:T2.2}
& \, =: \, \big\langle \tilde{P}_{\theta'} , \tilde{P}_{\theta''} \big\rangle_{\exp}
\,,
\end{align}
for all $\theta',\theta'' \in \{-1,1\}^{\cal K}$ since $\tilde{X}$ coincides with the vector $(\beta'_{X_1,1},\ldots,\beta'_{X_1,m})$ from (\ref{eq:eps}). Note that $\langle\cdot,\cdot\rangle_{\exp}$ represents an inner product on the linear space of all finite signed measures $Q$ on $\mathfrak{B}(\mathbb{R}^m)$ such that the support of the measure $|Q|$ is included in the $m$-dimensional closed unit ball around $0$. By a slight abuse of the notation we write $\langle f_{\cal K}, f_{\cal K'} \rangle_{\exp}$ for the corresponding inner product of the signed measures which are induced by the functions $f_{\cal K}$ and $f_{{\cal K}'}$. We show that the $f_{\cal K}$ form an orthogonal system with respect to this inner product; precisely we have that
\begin{align} \nonumber
\langle  f_{\cal K}, f_{\cal K'} \rangle_{\exp} & \, = \, \iint \exp\big( x^\dagger x' / \sigma^2\big)\, f_{\cal K}(x) \, f_{{\cal K}'}(x') \, dx\, dx' \\  \nonumber & \, = \, 1\{{\cal K} = {\cal K}'\}\cdot \Big[\iint \exp\big\{s t / (\sigma^2 K)\big\} \, f(s)\, f(t)\, ds\, dt\Big]^K \\  \nonumber & \hspace{2cm} \cdot \Big[\iint \exp\big\{s t / \big(\sigma^2 (m-K)\big)\big\} \, |f(s)|\, |f(t)|\, ds\, dt\Big]^{m-K} \\  \label{eq:T2.3} & \, = \, 1\{{\cal K} = {\cal K}'\}\cdot \big(16\, \sigma^2 K\big)^{-K} \cdot \big\{1 \pm o(1)\big\}\,,   
\end{align}
if $K$ and $m-K$ tend to infinity as $n\to\infty$. 

Combining (\ref{eq:T2.2}), (\ref{eq:T2.3}) and the fact that the $\theta_{\cal K}$'s are centered random variables we deduce that the first term in (\ref{eq:T2.10}) equals 
\begin{align}  \label{eq:T2.4}
E_\theta \int \big|f_{Y,\theta}(v)\big|^2 dP_V(v) & \, = \, \|S\|_{\exp}^2 \, + \, \eta^2 {m \choose K}^{-2} \sum_{\cal K} \big\|f_{\cal K}\big\|_{\exp}^2\,,
\end{align}
where $\|\cdot\|_{\exp}$ stands for the norm which is induced by $\langle\cdot,\cdot\rangle_{\exp}$ and the measure $S$ on $\mathfrak{B}(\mathbb{R}^m)$ is defined by 
$$ S(B) \, = \, (1-\eta) \, 1_B(0) \, + \, \eta {m \choose K}^{-1} \sum_{\cal K} \int_B |f_{\cal K}(x)| \, dx\,, \qquad B\in \mathfrak{B}(\mathbb{R}^m)\,. $$
The second term in (\ref{eq:T2.10}) is
\begin{align*}
& E_{\theta',\theta''}  \int \Big\{\int f_{Y,\theta'}(u) f_{Y,\theta''}  dP_V(u)\Big\} f_{Y,\theta'}^{(n)}(v) f_{Y,\theta''}^{(n)}(v) / \big\{E_\theta f_{Y,\theta}^{(n)}(v)\big\}  dP_V^{(n)}(v) \\
& \quad = \, \|S\|_{\exp}^2  +  \eta^2 {m \choose K}^{-2} \sum_{\cal K} \big\|f_{\cal K}\big\|_{\exp}^2 \cdot \int \big\{E_\theta \theta_{\cal K} f_{Y,\theta}^{(n)}(v)\big\}^2 / E_\theta f_{Y,\theta}^{(n)}(v)  dP_V^{(n)}(v)\,,
\end{align*}
where, here, $\theta'$ and $\theta''$ denote two independent copies of $\theta$. There we have used the fact that
\begin{align}  \label{eq:T2.11}
E_\theta &\int f_{Y,\theta}^{(n)}(v)\,  dP_V^{(n)}(v) \, = \, 1\,, \hspace{1.1cm} E_\theta \theta_{\cal K} f_{Y,\theta}^{(n)} \, = \, \frac12 E_\theta f_{Y,\theta({\cal K},+)}^{(n)} - \frac12 E_\theta f_{Y,\theta({\cal K},-)}^{(n)}\,, \end{align}
where $\theta({\cal K},\pm)$ denotes the vector $\theta$ with  $\theta_{\cal K}$ replaced by $\pm 1$; hence,
$$ \int E_\theta \theta_{\cal K} f_{Y,\theta}^{(n)}(v) \, dP_V^{(n)}(v) \, = \, 0\,. $$ 
Together with (\ref{eq:T2.4}) this implies that the right hand side of (\ref{eq:T2.10}) equals
\begin{align} \label{eq:T2.20}
\eta^2 {m \choose K}^{-2} \sum_{\cal K} \big\|f_{\cal K}\big\|_{\exp}^2 \cdot \Big[1 - \int \big\{E_\theta \theta_{\cal K} f_{Y,\theta}^{(n)}(v)\big\}^2 / E_\theta f_{Y,\theta}^{(n)}(v) \, dP_V^{(n)}(v)\Big]\,.
\end{align}
Using (\ref{eq:T2.11}) and the fact that 
$E_\theta f_{Y,\theta}^{(n)} \, = \, \frac12 E_\theta f_{Y,\theta({\cal K},+)}^{(n)} + \frac12 E_\theta f_{Y,\theta({\cal K},-)}^{(n)}$, 
we establish that
\begin{align} \nonumber
1 - \int \big\{E_\theta \theta_{\cal K}  f_{Y,\theta}^{(n)}(v)\big\}^2 / & E_\theta f_{Y,\theta}^{(n)}(v)  dP_V^{(n)}(v)  \\
\notag &\, \geq \,  2 \int \sqrt{E_\theta f_{Y,\theta({\cal K},+)}^{(n)}(v)} \sqrt{E_\theta f_{Y,\theta({\cal K},-)}^{(n)}(v)} \, dP_V^{(n)}(v)  -  1 \\ 
 \nonumber& \, \geq \, 2 E_\theta \int \sqrt{f_{Y,\theta({\cal K},+)}^{(n)}(v)} \sqrt{f_{Y,\theta({\cal K},-)}^{(n)}(v)} \, dP_V^{(n)}(v) \, - \, 1 \\
 \label{eq:T2.30}
& \, = \, 2 E_\theta \Big(\int \sqrt{f_{Y,\theta({\cal K},+)}(v)} \sqrt{f_{Y,\theta({\cal K},-)}(v)} \, dP_V(v)\Big)^n \, - \, 1\,,
\end{align}
by the Cauchy-Schwarz inequality. The Hellinger affinity between the densities $f_{Y,\theta({\cal K},+)}$ and $f_{Y,\theta({\cal K},-)}$ is bounded from below by the corresponding $\chi^2$-distance, i.e.
\begin{align*}
 \int \sqrt{f_{Y,\theta({\cal K},+)}(v)} \sqrt{f_{Y,\theta({\cal K},-)}(v)} \, dP_V(v) & \, \geq \, 1 - \frac12 \chi^2\big\{f_{Y,\theta({\cal K},+)},f_{Y,\theta({\cal K},-)}\big\}\,,
\end{align*}
where $\chi^2(f,g) = \int (f-g)^2/f \, dP_V$. We refer to the book of Tsybakov (2009) for an intensive review on these information distances. We deduce that
$$ f_{Y,\theta({\cal K},+)}(V_1) \, = \, f_{Y,\theta}^{[m]}(\beta'_{V_1,1},\ldots,\beta'_{V_1,m}) \, \geq \, 1 - \eta\,, \mbox{ a.s.}\,. $$ 
Equipped with this inequality and (\ref{eq:T2.2}) we consider that
\begin{align} \nonumber 
\chi^2\big(f_{Y,\theta({\cal K},+)},f_{Y,\theta({\cal K},-)}\big) & \, \leq \, \frac1{1-\eta}\, \big\|\tilde{P}_{\theta({\cal K},+)} - \tilde{P}_{\theta({\cal K},-)}\big\|_{\exp}^2 \, \leq \, \frac{4 \eta^2}{1-\eta}\, {m \choose K}^{-2} \big\|f_{\cal K}\big\|_{\exp}^2\,.
\end{align}
Combining this with (\ref{eq:T2.3}), (\ref{eq:T2.20}) and (\ref{eq:T2.30}) we obtain that
\begin{align} \nonumber
 \sup_{P_X \in {\cal F}_X} &{\cal R}\big(\hat{f}_n,f_Y\big) \geq  \eta^2 {m \choose K}^{-1} (16 \sigma^2 K)^{-K} \\ 
 \label{eq:T2.150}& \ \cdot \big\{1 \pm o(1)\big\} \cdot \Big(1 - \frac{2\eta^2}{1-\eta} {m \choose K}^{-2} (16 \sigma^2 K)^{-K}\cdot \big\{1 \pm o(1)\big\} \Big)^n\,.
\end{align}

Now we take $m = \lfloor (D_M \log n)^{1/\gamma}\rfloor$ and $K = \lfloor D_K (\log n) / \log(\log n) \rfloor$ for some constants $D_M,D_K > 0$. Whenever
$- D_M C_{X,3} - 2 D_K (1 / \gamma - 1) - D_K \, < \, -1$, 
the inequality (\ref{eq:T2.150}), together with (\ref{eq:T2.100}), yields that
\begin{align*}
\liminf_{n\to\infty} & \sup_{P_X \in {\cal F}_X} \big\{\log {\cal R}\big(\hat{f}_n,f_Y\big)\big\} / \log n \, \geq \, -D_M C_{X,3} - D_K /\gamma\,. 
\end{align*}
We may choose $D_K = \gamma/(2-\gamma)$ and $D_M>0$ arbitrarily close to $0$, which completes the proof of the theorem. \hfill $\square$

\subsection{Proof of Theorem~\ref{T:3}}
The proof follows a usual structure of adaptivity proofs for cross-validation techniques, see e.g. Section 2.5.1 in the book of Meister (2009) for a related proof in the field of density deconvolution.

Let $(m_n,K_n)$ be defined as in the statement of Theorem~\ref{T:1} and define the set
$$ G' \, = \, \big\{(m,K) \in G \, : \, {\cal R}(\hat{f}_Y^{[m,K]},f_Y) > 2\, {\cal R}(\hat{f}_Y^{[m_n,K_n]},f_Y)\big\}\,. $$ 
Using the notation $ \|g\|_{P_V}^2 \, = \, \int |g(x)|^2 dP_V(x) $, for any $g\in L_2(P_V)$, we need to prove that
$\lim_{n\to\infty}\sup_{P_X \in {\cal F}_X} P\big(n^{\gamma} \|\hat{f}_Y^{[\hat{m},\hat{K}]} -  f_Y\|_{P_V}^2 \geq n^{d}\big)=0$. 

By Markov's inequality we have
\begin{align} \nonumber
P&\big(n^{\gamma} \|\hat{f}_Y^{[\hat{m},\hat{K}]} -  f_Y\|_{P_V}^2 \geq n^{d}\big)  \\ \nonumber & \leq  \sum_{(m,K)\in G\backslash G'} P\big(\|\hat{f}_Y^{[m,K]} - f_Y\|_{P_V}^2  >  n^{-\gamma+d}\big)  + P\big[ (\hat{m},\hat{K}) \in G'\big] \\ \label{eq:T.3.1} &  \leq 2 (\# G)\cdot n^{\gamma-d} \cdot {\cal R}\big(\hat{f}_Y^{[m_n,K_n]},f_Y\big) + \sum_{(m,k)\in G'} P\big(\hat{m}=m,\hat{K}=K\big)\,.
\end{align}
By Theorem~\ref{T:1}, the first term in (\ref{eq:T.3.1}) converges to $0$ as $n\to\infty$ uniformly over $P_X \in {\cal F}_X$. It remains to study the second term.

On the event $\{\hat{m} = m, \hat{K} = K\}$, we have  $\mbox{CV}(m,K) \leq \mbox{CV}(m_n,K_n)$ and, hence also
\begin{align} \nonumber  
&\big\|\hat{f}_Y^{[m,K]}\big\|_{P_V}^2 - E\big\|\hat{f}_Y^{[m,K]}\big\|_{P_V}^2 - 2 \Delta_1(m,K) - 4 \Delta_2(m,K)  + {\cal R}\big(\hat{f}_Y^{[m,K]},f_Y\big) \\ \nonumber & \, \leq  \big\|\hat{f}_Y^{[m_n,K_n]}\big\|_{P_V}^2 - E\big\|\hat{f}_Y^{[m_n,K_n]}\big\|_{P_V}^2 - 2 \Delta_1(m_n,K_n) - 4 \Delta_2(m_n,K_n) \\ \label{eq:CV.4} & \hspace{8cm} + {\cal R}\big(\hat{f}_Y^{[m_n,K_n]},f_Y\big)\,,
\end{align}
where
$
\Delta_1(m,K)  =  \{n(n-1)\}^{-1} \sum_{i\neq i'} \sum_{{\bf k} \in {\cal K}(m,K)} \overline{\Xi}(i,m,{\bf k}) \cdot \overline{\Xi}(i',m,{\bf k})$, \\
$\Delta_2(m,K)  =  n^{-1} \sum_{i=1}^n \sum_{{\bf k} \in {\cal K}(m,K)} \big\{E\, \Xi(1,m,{\bf k})\big\}\cdot \overline{\Xi}(i,m,{\bf k})$, 
${\cal K}(m,K)  \, = \, \big\{ {\bf k} \in \mathbb{N}_0^m \, : \, k_1+\cdots+k_m \leq K\big\}$,
$\Xi(j,m,{\bf k})  \, = \, H_{{\bf k}}\big(\beta'_{Y_j,1}/\sigma,\ldots,\beta'_{Y_j,m}/\sigma\big)$ and \\
$\overline{\Xi}(j,m,{\bf k})  \, = \, \Xi(j,m,{\bf k}) - E\, \Xi(j,m,{\bf k})$.

The first terms of both sides of the inequality at (\ref{eq:CV.4}) can be represented as follows, using the orthonormality of the $H_{\bf k}$'s:
\begin{align*}
\|&\hat{f}_Y^{[m,K]}\|_{P_V}^2 - E  \|\hat{f}_Y^{[m,K]}\|_{P_V}^2
 \\ & \, = \, \sum_{{\bf k} \in {\cal K}(m,K)} \Big\{\Big|\frac1n \sum_{i=1}^n \Xi(i,m,{\bf k})\Big|^2 - E \Big|\frac1n \sum_{i=1}^n \Xi(i,m,{\bf k})\Big|^2\Big\} \\
& \, = \, \frac1{n^2} \sum_{i,i'} \sum_{{\bf k} \in {\cal K}(m,K)} \big\{\Xi(i,m,{\bf k}) \Xi(i',m,{\bf k}) - E\, \Xi(i,m,{\bf k}) \Xi(i',m,{\bf k})\big\} \\
& \, = \, (1-1/n) \big\{\Delta_1(m,K) + 2 \Delta_2(m,K)\big\} + \Delta_3(m,K)\,,
\end{align*}
where
$ \Delta_3(m,K) \, = n^{-2} \sum_{i=1}^n \sum_{{\bf k}\in {\cal K}(m,K)} \big\{ \Xi^2(i,m,{\bf k}) - E\, \Xi^2(i,m,{\bf k})\big\}$.
Together with \eqref{eq:CV.4} this implies that, for $(m,K) \in G'$,   
$ {\cal R}\big(\hat{f}_Y^{[m,K]},f_Y\big)/2 \, \leq \, \Delta_4(m,K,m_n,K_n)$, 
where
\begin{align*} \Delta_4&(m,K,m_n,K_n)  =   (1+1/n) \big\{\big|\Delta_1(m,K)\big| + \big|\Delta_1(m_n,K_n)\big| \\ & +  2 \big|\Delta_2(m,K) - \Delta_2(m_n,K_n)\big|\big\} + |\Delta_3(m,K)| + |\Delta_3(m_n,K_n)|\,.
\end{align*}
Hence the second term in (\ref{eq:T.3.1}) has the following upper bound:
\begin{equation} \label{eq:CV.10} 2 \sum_{(m,K)\in G'} \big\{{\cal R}\big(\hat{f}_Y^{[m,K]},f_Y\big)\big\}^{-1} \, \big\{E \Delta_4^2(m,K,m_n,K_n)\big\}^{1/2}\,. \end{equation}

In order to bound \eqref{eq:CV.10},  we need a lower bound on ${\cal R}\big(\hat{f}_Y^{[m,K]},f_Y\big)$. Theorem~\ref{P:1} provides only an upper bound to this term but an inspection of the proof of this theorem -- in particular (\ref{eq:P1.1}) to (\ref{eq:H2}) -- yields that
\begin{equation} \label{eq:CV.low}
{\cal R}\big(\hat{f}_Y^{[m,K]},f_Y\big) \, \geq \, {\cal B}(m,K) + {\cal V}^*_n(m,K) + {\cal D}^*(m) - \frac1n \big\|f_Y^{[m]}(\sigma\cdot)\big\|_{g_1}^2\,,
\end{equation}
where ${\cal B}(m,K)$ is the term ${\cal B}$ from Theorem~\ref{P:1} and
$$
{\cal D}^*(m)  \, = \, E\, \mbox{var}\{f_Y(V_1) | \mathfrak{A}_m\}\,,  \ \ 
{\cal V}^*_n(m,K)  \, = \, \frac1n {K+m \choose K}\,.
$$
Here we have used the fact that 
$ E\big\{H_{k_1,\ldots,k_m}^2(\beta'_{Y_1,1}/\sigma,\ldots,\beta'_{Y_1,m}/\sigma)\big\} \, \geq \, 1$, 
which comes from the first lines of \eqref{eq:H2}. 

In order to bound \eqref{eq:CV.10}, we also need an upper bound for $E \Delta_4^2(m,K,m_n,K_n)$, which involves $\Delta_1$ to $\Delta_3$.  For $\Delta_1$ we have
\begin{align} \nonumber
&E \big|\Delta_1(m,K)\big|^2 \, = \, \frac2{n(n-1)} \sum_{{\bf k},{\bf k}' \in {\cal K}(m,K)} \big[ \mbox{cov}\big\{\Xi(1,m,{\bf k}), \Xi(1,m,{\bf k}')\big\}\big]^2 \\ \nonumber
&  =  \frac2{n(n-1)} \sum_{{\bf k},{\bf k}' \in {\cal K}(m,K)} \big[ \big\langle H_{{\bf k}} , H_{{\bf k}'} f_Y^{[m]}(\sigma \cdot)\big\rangle_{g_1} \\ \nonumber & \hspace{6cm} - \big\langle H_{{\bf k}} , f_Y^{[m]}(\sigma \cdot)\big\rangle_{g_1} \cdot \big\langle H_{{\bf k}'} , f_Y^{[m]}(\sigma \cdot)\big\rangle_{g_1}\big]^2 \\ \nonumber &  \leq  \frac4{n(n-1)} \sum_{{\bf k},{\bf k}' \in {\cal K}(m,K)} \big\langle H_{{\bf k}} , H_{{\bf k}'} f_Y^{[m]}(\sigma \cdot)\big\rangle_{g_1}^2  \\ \nonumber & \hspace{6cm} +  \frac4{n(n-1)} \Big(\sum_{{\bf k}\in {\cal K}(m,K)} \big\langle H_{{\bf k}} , f_Y^{[m]}(\sigma \cdot)\big\rangle_{g_1}^2\Big)^2 \\ \nonumber &  \leq  \frac4{n(n-1)} \sum_{{\bf k}'\in {\cal K}(m,K)} \big\| H_{{\bf k}'} f_Y^{[m]}(\sigma \cdot) \big\|_{g_1}^2 + \frac4{n(n-1)} \big\|f_Y^{[m]}(\sigma \cdot)\big\|_{g_1}^4 \\ \label{eq:CV.6}
&  \leq  \frac4{n-1} \big\|\{f_Y^{[m]}(\sigma\cdot)\}^2\big\|_{g_1} \cdot \sup_{{\bf k}\in {\cal K}(m,K)} \big\|H_{{\bf k}}^2\big\|_{g_1}  {\cal V}^*_n(m,K)  + \frac4{n(n-1)} \big\|\{f_Y^{[m]}(\sigma\cdot)\}^2\big\|_{g_1}^2,
\end{align}
where we have used Parseval's identity with respect to the orthonormal system $H_{{\bf k}}$.

For the term involving $\Delta_2$ we have
\begin{align} \nonumber
E &\big|\Delta_2(m,K) - \Delta_2(m_n,K_n)\big|^2 \\ \nonumber & \, \leq\, \frac1n \, E\Big[\sum_{{\bf k} \in {\cal K}(m,K)} \Xi(1,m,{\bf k}) \{ E\, \Xi(1,m,{\bf k}) \} \\ \nonumber & \hspace{5cm} - \sum_{{\bf k} \in {\cal K}(m_n,K_n)} \Xi(1,m_n,{\bf k}) \{ E\, \Xi(1,m_n,{\bf k}) \}\Big]^2 \\
 \nonumber & \, = \, \frac1n \, E\Big[\sum_{{\bf k} \in \mathbb{N}_0^{\overline{m}}} \big\{1_{{\cal K}^{\overline{m}}(m,K)}({\bf k}) - 1_{{\cal K}^{\overline{m}}(m_n,K_n)}({\bf k})\big\} \Xi(1,\overline{m},{\bf k}) \{ E\, \Xi(1,\overline{m},{\bf k}) \}\Big]^2 \\
\nonumber & \, = \, \frac1n \, \sum_{{\bf k}, {\bf k}'} \big\{1_{{\cal K}^{\overline{m}}(m,K)}({\bf k}) - 1_{{\cal K}^{\overline{m}}(m_n,K_n)}({\bf k})\big\}  \big\{1_{{\cal K}^{\overline{m}}(m,K)}({\bf k}') - 1_{{\cal K}^{\overline{m}}(m_n,K_n)}({\bf k}')\big\} \\ \label{eq:CV.1000} & \hspace{3.4cm} \cdot \langle H_{\bf k} , f_Y^{[\overline{m}]}(\sigma\cdot)\rangle_{g_1} \langle H_{{\bf k}'} , f_Y^{[\overline{m}]}(\sigma\cdot)\rangle_{g_1} \big\langle H_{\bf k} , H_{{\bf k}'} f_Y^{[\overline{m}]}(\sigma\cdot)\big\rangle_{g_1}\,,
\end{align}
with $\overline{m} = \max\{m,m_n\}$ and
$ {\cal K}^{\overline{m}}(m,K) \, = \, \big\{{\bf k} \in {\cal K}(\overline{m},K) \, : \, k_l = 0, \, \forall l>m\big\}\,. $
Here we have used the fact that $H_0 \equiv 1$ and 
$ E\big\{f_Y^{[\overline{m}]}(\beta'_{V_1,1},\ldots,\beta'_{V_1,\overline{m}}) \mid \mathfrak{A}_m\big\} = f_Y^{[m]}(\beta'_{V_1,1},\ldots,\beta'_{V_1,m})$ $\mbox{ a.s.}$, 
which follows from Lemma~\ref{L:1}(a). 

Applying the Cauchy-Schwarz inequality and Parseval's identity, we get that the right hand side of (\ref{eq:CV.1000}) has the following upper bound:
\begin{align} \nonumber
 & \frac1n \Big(\sum_{{\bf k}} \big|1_{{\cal K}^{\overline{m}}(m,K)}({\bf k}) - 1_{{\cal K}^{\overline{m}}(m_n,K_n)}({\bf k})\big| \langle H_{\bf k} , f_y^{[\overline{m}]}(\sigma\cdot)\rangle_{g_1}^2\Big) \\ \nonumber & \hspace{6.5cm} \cdot \Big(\sum_{{\bf k} \in {\cal K}(\overline{m},\overline{K})} \big\|H_{{\bf k}} \cdot f_Y^{[\overline{m}]}(\sigma\cdot)\big\|_{g_1}^2\Big)^{1/2} \\
\nonumber & \, \leq \, {\cal V}^*_n(\overline{m},\overline{K}) \cdot \big\{ \big\|{\cal P}_{{\cal H}(m,K)} f_Y^{[\overline{m}]}(\sigma\cdot)\big\|_{g_1}^2 + \big\|{\cal P}_{{\cal H}(m_n,K_n)} f_Y^{[\overline{m}]}(\sigma\cdot)\big\|_{g_1}^2 \\ \nonumber & \hspace{8cm} - 2 \big\|{\cal P}_{{\cal H}(\underline{m},\underline{K})} f_Y^{[\overline{m}]}(\sigma\cdot)\big\|_{g_1}^2\big\} \\ \label{eq:CV.1001} & \hspace{1.8cm} \cdot {\overline{m} + \overline{K} \choose \overline{m}}^{-1/2} \cdot \big\|\{f_Y^{[\overline{m}]}(\sigma\cdot)\}^2\big\|_{g_1}^{1/2} \cdot \max\big\{\|H_{\bf k}^2\|_{g_1}^{1/2} : {\bf k} \in {\cal K}(\overline{m},\overline{K})\big\}\,,  \end{align}
where $\overline{K} = \max\{K,K_n\}$, $\underline{m} = \min\{m,m_n\}$, $\underline{K} = \min\{K,K_n\}$ and ${\cal P}_{{\cal H}(m,k)}$ denotes the orthogonal projector onto the linear subspace ${\cal H}(m,K)$ of $L_{2,g_1}(\mathbb{R}^m)$. 

Since
\begin{align*}
\|{\cal P}_{{\cal H}(m,K)} f_Y^{[\overline{m}]}(\sigma\cdot)\big\|_{g_1}^2 & \, = \, \|f_Y^{[m]}(\sigma\cdot)\big\|_{g_1}^2 - {\cal B}(m,K) \\ & \, = \, E |f_Y(V_1)|^2 - {\cal D}^*(m) - {\cal B}(m,K)\,, 
\end{align*} 
then using Lemma~\ref{L:1}(a), the right hand side of (\ref{eq:CV.1001}) has the following upper bound:
\begin{align} \nonumber
& 3 {\cal V}^*_n(\overline{m},\overline{K}) \cdot \big\{ {\cal D}^*(m) + {\cal D}^*(m_n) + {\cal B}(m,K) + {\cal B}(m_n,K_n)\big\}\cdot {\overline{m} + \overline{K} \choose \overline{m}}^{-1/2}\\ \label{eq:CV.1002} & \hspace{4cm} \cdot \big\|\{f_Y^{[\overline{m}]}(\sigma\cdot)\}^2\big\|_{g_1}^{1/2} \cdot \max\big\{\|H_{\bf k}^2\|_{g_1}^{1/2} : {\bf k} \in {\cal K}(\overline{m},\overline{K})\big\}\,, 
\end{align}
since ${\cal B}(m,K)$ decreases as $K$ increases.

Finally, for the term involving $\Delta_3$ we have
\begin{align} \nonumber
E\big|&\Delta_3(m,K)\big|^2 \, \leq \, n^{-3}\,  E\Big\{\sum_{{\bf k} \in {\cal K}(m,K)} \Xi^2(1,m,{\bf k})\Big\}^2 \\ \label{eq:CV.Delta3} & \, \leq \, \frac1n\, \big\{{\cal V}^*_n(m,K)\big\}^2 \, \big\|\big\{f_Y^{[m]}(\sigma\cdot)\big\}^2\big\|_{g_1} \cdot \max\big\{\|H_{\bf k}^4\|_{g_1} : {\bf k} \in {\cal K}(m,K)\big\}\,.
\end{align}

In order to bound the terms (\ref{eq:CV.6}), (\ref{eq:CV.1002}) and (\ref{eq:CV.Delta3}), we need some technical results. Using the explicit sum representation of the Hermite polynomials we write 
\begin{align} \nonumber
& \int  H_k^\ell(x) \frac1{\sqrt{2\pi}} \exp(-x^2/2) dx \\ \nonumber & =   \sum_{i_1,\ldots,i_l=0}^{\lfloor k/2 \rfloor}  \frac{(k!)^{l/2}\cdot (-2)^{-i_1-\cdots-i_\ell}}{i_1! \cdots i_\ell! \cdot (k-2i_1)! \cdots (k-2i_\ell)!} \int x^{k\ell - 2(i_1+\cdots+i_\ell)} \frac1{\sqrt{2\pi}} \exp(-x^2/2)  dx \\
\nonumber & \leq  \sum_{i=0}^{\ell \lfloor k/2 \rfloor} 2^{-i} (k!)^{\ell/2} \frac{\Gamma(k\ell/2+1/2-i)}{i! (k\ell - 2i)!} \\ \nonumber & \hspace{4.5cm} \times \sum_{i_1+\cdots+i_\ell = i} {i \choose i_1,\ldots,i_\ell} {k\ell - 2i \choose k-2i_1,\ldots,k-2i_\ell} / \sqrt{\pi} \\
\nonumber & \leq \sum_{i=0}^{\ell\lfloor k/2 \rfloor} 2^{-i} {k\ell/2 \choose i} \ell^{k\ell-i} \, \leq \, (\ell^2+\ell/2)^{k\ell/2}\,,
\end{align} 
for any $k\in \mathbb{N}_0$ and any even integer $\ell>0$. Furthermore we have
\begin{align*}
\big\|\{f_Y^{[m]}(\sigma\cdot)&\}^2\big\|_{g_1}^2  \, = \, E \big|E\big\{f_Y(V_1) | \mathfrak{A}_m\big\}\big|^4 \, \leq \, E f_Y^4(V_1) \\
& \, \leq \, E \exp\Big(-\frac2{\sigma^2} \int_0^1 |X'_1(t)|^2\, dt\Big)\, E\Big\{ \exp\Big(\frac4{\sigma^2} \int_0^1 X_1'(t)\, dV_1(t)\Big) \big| X_1\Big\} \\
& \, \leq \, \exp\big\{(8/\sigma^4)\, C_{X,1}^2\big\}\,,
\end{align*}
where we used (\ref{eq:expGauss}) and Assumption 2. Applying these results to (\ref{eq:CV.6}), (\ref{eq:CV.1002}) and (\ref{eq:CV.Delta3}) and recalling (\ref{eq:CV.low}), we deduce that (\ref{eq:CV.10}) has the upper bound
$$ (\log n)^{1+1/\gamma_0} \, D_0^{\overline{K}} \, \Big[ \big\{ n\, {\cal R}(\hat{f}_Y^{[m,K]},f_Y)\big\}^{-1/2} + {\overline{m} + \overline{K} \choose \overline{m}}^{-1/4}\Big]\,, $$  for some global finite constant $D_0>0$, so that (\ref{eq:CV.10}) converges to zero uniformly over $P_X \in {\cal F}_X$. This completes the proof of the theorem. \hfill $\square$\\[.1cm]

\section*{Acknowledgments}
{\small Delaigle's research was supported by a grant and a fellowship from the Australian Research Council. The authors are grateful to the editors and two referees for their helpful and valuable comments.}


\begin{thebibliography}{}

\bibitem{} Appell, P. (1988). 
\newblock Sur une classe de polyn{\^o}mes. 
\newblock {\it Annales scientifiques de l'{\'E}cole Normale Sup{\'e}rieure 2me s{\'e}rie} {\bf 9}, 119--144.

\bibitem{} Ba{\'i}llo, A., Cuevas, A. and Cuesta-Albertos, J.A. (2011). 
\newblock Supervised classification for a family of Gaussian functional models.\newblock {\it Scand. J. Statist.} {\bf 38}, 480--498. 


\bibitem{} Bugni, F.A., Hall, P., Horowitz, J.L. and Neumann, G.R. (2009). 
\newblock Goodness-of-fit tests for functional data. 
\newblock {\it Econometrics J.} {\bf 12}, 1--18.

\bibitem{} Cameron, R. H. and Martin, W. T. (1947). 
\newblock The orthogonal development of nonlinear functionals in series Fourier–Hermite functionals. 
\newblock {\it Ann. Math.}, {\bf 48}, 385--392. 


\bibitem{} Chagny, G. and Roche, A. (2014). 
\newblock Adaptive and minimax estimation of the cumulative distribution function given a functional covariate. 
\newblock {\it Elec. J. Statist.} {\bf 8}, 2352--2404.

\bibitem{} Chesneau, C., Kachour, M. and Maillot, B. (2013). 
\newblock Nonparametric estimation for functional data by wavelet thresholding. \newblock {\it REVSTAT -- Statistical Journal} {\bf 11}, 211--230. 

\bibitem{} Ciollaro, M., Genovese, Ch.R. and Wang, D. (2016). 
\newblock Nonparametric clustering of functional data using pseudo-densities. 
\newblock {\it Elec. J. Statist.} {\bf 10}, 2922-2972. 




\bibitem{} Dabo-Niang, S. (2002). 
\newblock Estimation de la densit\'e en dimension infinie: Application aux processus de type diffusion. 
\newblock {\it  C.R. Acad. Sci. Paris I}, {\bf 334}, 213-–216. 


\bibitem{} Dabo-Niang, S. (2003). 
\newblock Density estimation in a separable metric space. 
\newblock {\it Pub. Inst. Stat. Univ. Paris}, {\bf 47}, fasc. 1-2, 3--21. 

\bibitem{} Dabo-Niang, S. (2004a). 
\newblock Density estimation by orthogonal series in an infinite dimensional space: application to processes of diffusion type I. 
\newblock {\it J. Nonparametric Statistics}, {\bf 16}, 171--186. 

\bibitem{} Dabo-Niang, S. (2004b). 
\newblock Kernel density estimator in an infinite-dimensional space with a rate of convergence in the case of diffusion process. 
\newblock {\it Applied Mathematics Letters} {\bf 17}, 381–-386.

\bibitem{} Dabo-Niang, S. and Yao, A.-F. (2013). 
\newblock Kernel spatial density estimation in infinite dimension space. 
\newblock {\it Metrika} {\bf 76}, 19--52. 

\bibitem{} Dai, X., M\"uller, H.-G. and Yao, F. (2017). 
\newblock Optimal Bayes classifiers for functional data and density ratios. 
\newblock {\it Biometrika} {\bf 104}, 545--560.

\bibitem{} Delaigle, A. and Hall, P. (2010). 
\newblock Defining probability density for a distribution of random functions.  \newblock {\it Ann. Statist.} {\bf 38}, 1171--1193.

\bibitem{}  Delaigle, A. and Hall, P. (2012). 
\newblock Achieving near-perfect classification for functional data. 
\newblock {\it J. Roy. Statist. Soc., Ser. B} {\bf 74}, 267--286.  

\bibitem{}  Delaigle, A., Hall, P. and Bathia, N. (2012). 
\newblock Componentwise classification and clustering of functional data.  
\newblock {\it Biometrika}, {\bf 99}, 299--313.

\bibitem{}  Delaigle, A. and Hall, P. (2013). 
\newblock Classification using censored functional data. 
\newblock {\it J. Amer. Statist. Assoc.} {\bf 108}, 1269--1283.

\bibitem{}  Escabias, M., Aguilera, A.M. and Valderrama,M.J. (2007). 
\newblock Functional PLS logit regression model. 
\newblock {\it Comput. Statist. Data Anal.} {\bf 51}, 4891--4902.


\bibitem{}  Ferraty, F. and Vieu, P. (2006). 
\newblock {\it Nonparametric functional data analysis: theory and practice}, 
\newblock Springer. 

\bibitem{}  Girsanov, I. V. (1960). 
\newblock On transforming a certain class of stochastic processes by absolutely continuous substitution of measures. 
\newblock {\it Theo. Probab. Appl.} {\bf 5}, 285–-301.

\bibitem{}  Hall, P. and Hosseini-Nasab, M. (2006). 
\newblock On properties of functional principal components analysis. 
\newblock {\it J. Roy. Statist. Soc., Ser B} {\bf 68},109--126. 

\bibitem{}  Hall, P., Poskitt, D. and Presnell, B. (2001). 
\newblock A functional data-analytic approach to signal discrimination. 
\newblock {\it Technometrics} {\bf 43}, 1--9.

\bibitem{}  Hall, R., Rinaldo, A. and Wasserman, L. (2013). 
\newblock Differential privacy for functions and functional data. 
\newblock {\it J. Mach. Learn. Research} {\bf 14}, 703--727.

\bibitem{}  Jirak, M. (2016). \newblock Optimal eigen expansions and uniform bounds. 
\newblock {\it Probab. Theo. Rel. Fields} {\bf 166}, 753--799.

\bibitem{}  Karwa, V. and Slavkovi, A. (2016). 
\newblock Inference using noisy degrees: differentially private $\beta$-model and synthetic graphs. 
\newblock {\it Ann. Statist.} {\bf 44}, 87--112.

\bibitem{} Kim, A.K.H. (2014). 
\newblock Minimax bounds for estimation of normal mixtures. 
\newblock {\it Bernoulli} {\bf 20}, 1802--1818.


\bibitem{}  Lin, Z., M\"uller, H.-G. and Yao, F. (2018). 
\newblock Mixture inner product spaces and their application to functional data analysis. 
\newblock {\it Ann. Statist.} {\bf 46}, 370--400.


\bibitem{} Mas, A. (2012). 
\newblock Lower bound in regression for functional data by small ball probability representation in Hilbert space. \newblock {\it Elec. J. Statist.} {\bf 6}, 1745--1778.

\bibitem{}  Mas A. and Ruymgaart, F. (2015). \newblock High dimensional principal projections. \newblock {\it Complex Anal. Operator Theo.} {\bf 9}, 35--63.



\bibitem{} Meister, A. (2009). \newblock {\it Deconvolution problems in nonparametric statistics}, \newblock Springer.

\bibitem{} Meister, A. (2016). \newblock Optimal classification and nonparametric regression for functional data. \newblock {\it Bernoulli} {\bf 22}, 1729--1744.

\bibitem{} Mirshani, A., Reimherr, M. and Slavkovic, A. (2017). \newblock Establishing statistical privacy for functional data via functional densities. \newblock {\it arXiv:1711.06660}.


\bibitem{} Prakasa Rao, B.L.S. (2010a). \newblock Nonparametric density estimation for functional data by delta sequences. \newblock {\it Braz. J. Probab. Stat.} {\bf 24}, 468--478.

\bibitem{} Prakasa Rao, B.L.S. (2010b). \newblock Nonparametric density estimation for functional data via wavelets. \newblock {\it Comm. Stat. -- Theo. Meth.} {\bf 39}, 1608--1618.


\bibitem{} Preda, C., Saporta, G. and Leveder, C. (2007). \newblock PLS classification of functional data. \newblock {\it Comput. Statist.} {\bf 22}, 223--235.

\bibitem{} Shin, H. (2008). \newblock An extension of Fisher's discriminant analysis for stochastic processes. \newblock {\it J. Mult. Anal.} {\bf 99}, 1191--1216.

\bibitem{}  Tsybakov, A.B. (2009). \newblock {\it Introduction to nonparametric estimation}, \newblock Springer.






\bibitem{} Wasserman, L., and Zhou, S. (2010). \newblock A statistical framework for differential privacy. \newblock {\it J. Amer. Statist. Assoc.} {\bf 105}, 375--389.


\bibitem{} Zhang, X.L., Begleiter, H., Porjesz, B., Wang, W.  and Litke, A. (1995). \newblock Event related potentials during object recognition tasks. \newblock {\it Brain Research Bulletin} {\bf 38}, 531--538.



\end{thebibliography}
\end{document}